\providecommand{\email}[1]{\texttt{#1}}
\providecommand\MoveEqLeft[1][2]{\kern #1em  &   \kern -#1em}
\DeclareRobustCommand{\qed}{%
  \ifmmode \mathqed
  \else
    \leavevmode\unskip\penalty9999 \hbox{}\nobreak\hfill
    \quad\hbox{\qedsymbol}%
  \fi
}
\numberwithin{equation}{section}
\newcommand{\sidenote}[1]{%
  \marginpar[\raggedleft\tiny #1]{\raggedright\tiny #1}
}
\newcommand{\Remove}[2][red]{\textcolor{#1}{\sout{#2}}}
\newcommand{\remove}[2][red]{%
  \ifmmode\text{\Remove[#1]{\ensuremath{#2}}}%
  \else\Remove[#1]{#2}\fi}
\newcommand{\Add}[2][blue]{\textcolor{#1}{\uwave{#2}}}
\newcommand{\add}[2][blue]{%
  \ifmmode\text{\Add[#1]{\ensuremath{#2}}}%
  \else\Add[#1]{#2}\fi}%
\definecolor{hlcolor}{Hsb}{60,.5,1}
\definecolor{shadecolor}{named}{hlcolor}
\newlength{\hbwidth}
\newcommand{\hbcolor}{hlcolor}
\newcommand{\Highlight}[2][hlcolor]{\bgroup\markoverwith{\textcolor{#1}{\rule[-.3em]{1pt}{1.2em}}}\ULon{#2}}
\newcommand{\highlight}[2][hlcolor]{%
  \ifmmode\text{\Highlight[#1]{\ensuremath{#2}}}%
  \else\Highlight[#1]{#2}\fi}%
\newenvironment{beqn}[1][;C?s]%
    {\left\{\begin{IEEEeqnarraybox}[\relax][c]{#1}}%
    {\end{IEEEeqnarraybox}\right.}
\newcommand{\del}{\partial}
\newcommand{\lap}{\Delta}
\newcommand{\inv}{^{-1}}
\newcommand{\gradperp}{\nabla^\perp}
\newcommand{\gradinv}{\nabla^{-1}}
\newcommand{\dv}{\nabla \cdot}
\newcommand{\curl}{\nabla \times}
\newcommand{\BS}{K_{BS}*}
\newcommand{\eps}{\varepsilon}
\renewcommand{\epsilon}{\eps}
\renewcommand{\leq}{\leqslant}
\renewcommand{\geq}{\geqslant}
\newcommand{\R}{\mathbb{R}}
\newcommand{\N}{\mathbb{N}}
\newcommand{\cL}{\mathcal{L}}
\newif\iftextstyle
\everydisplay\expandafter{\the\everydisplay\textstylefalse}
\DeclarePairedDelimiter{\abs}{\lvert}{\rvert}
\DeclarePairedDelimiter{\norm}{\lVert}{\rVert}
\DeclarePairedDelimiter{\paren}{(}{)}
\DeclarePairedDelimiter{\set}{\{}{\}}
\newcommand{\defeq}{\stackrel{\scriptscriptstyle \text{def}}{=}}
\newcommand{\sysOmegaD}{\eqref{eqnENSOmega}--\eqref{eqnENSu}\xspace}
\newcommand{\sysWD}{\eqref{eqnW}--\eqref{eqnU}\xspace}
\begin{document}
\title{Stability of Vortex Solutions to an Extended Navier-Stokes System%
  \thanks{%
    The authors thank the AMS Math Research Communities program (NSF grant DMS 1007980) where this research was initiated, and Center for Nonlinear Analysis (NSF Grants No. DMS-0405343 and DMS-0635983) where part of this research was carried out.
    GG acknowledges partial support from NSF grant DMS 1212141.
    GI acknowledges partial support from NSF grant DMS 1252912, and an Alfred P. Sloan research fellowship.
    JPW thanks the LANL/LDRD program for its support.
  }%
}

\author{%
  Gung-Min Gie\thanks{%
    Department of Mathematics, University of Louisville, Louisville, KY 40292.
    \email{gungmin.gie@louisville.edu}}
  \and
  Christopher Henderson\thanks{%
    Department of Mathematics, Stanford University, Stanford, CA 94305.
    \email{chris@math.stanford.edu}}
  \and
  Gautam Iyer\thanks{%
    Mathematical Sciences, Carnegie Mellon University, Pittsburgh, PA 15213.
    \email{gautam@math.cmu.edu}}
  \and
  Landon Kavlie\thanks{%
    Department of Mathematics, Statistics, and Computer Science, University of Illinois at Chicago, Chicago, IL 60607.
    \email{lkavli2@uic.edu}}
  \and
  Jared P. Whitehead\thanks{%
    Mathematics Department, Brigham Young University, Provo, UT 84602,
    \email{whitehead@mathematics.byu.edu}}
}

\pagestyle{myheadings}
\markboth
  {Stability of Vortex Solutions to an Extended Navier-Stokes System}
  {Gie, Henderson, Iyer, Kavlie and Whitehead}
\maketitle

\maketitle
\begin{abstract}
We study the long-time behavior an extended Navier-Stokes system in $\R^2$ where the incompressibility constraint is relaxed.
This is one of several ``reduced models'' of Grubb and Solonnikov '89 and was revisited recently (Liu, Liu, Pego '07) in bounded domains in order to explain the fast convergence of certain numerical schemes (Johnston, Liu '04).
Our first result shows that if the initial divergence of the fluid velocity is mean zero, then the Oseen vortex is globally asymptotically stable.
This is the same as the Gallay Wayne '05 result for the standard Navier-Stokes equations.
When the initial divergence is not mean zero, we show that the analogue of the Oseen vortex exists and is stable under small perturbations.
For completeness, we also prove global well-posedness of the system we study.
\end{abstract}

\begin{keywords}
  Navier-Stokes equation, infinite energy solutions, extended system, long-time behavior, Lyapunov function, asymptotic stability
\end{keywords}
\begin{AMS}
76D05, 35Q30, 76M25, 65M06
\end{AMS}



\section{Introduction}\label{sxnIntro}

The dynamics of vortices of the incompressible Navier-Stokes equations play a central role in the study of many problems.
Mathematically, control of the vorticity production~\cites{bblConstantinFefferman93,bblBealeKatoEtAl84} will settle a longstanding open problem regarding global existence of smooth solutions~\cites{bblFefferman06,bblConstantin01b}.
Physically, regions of intense vorticity manifest themselves as cyclones in the atmosphere~\cites{bblMontgomerySmith11,bblDengSmithEtAl12}, and at a slightly decreased intensity as eddies in the oceans~\cites{bblColasMcWilliamsEtAl12,bblPetersenWilliamsEtAl13}.  In all cases, regions of intense vorticity are of vital geophysical (and astrophysical) interest.

After many years of intense study (see for instance \cites{bblBen-Artzi94, bblBrezis94, bblCarpio94, bblFujigakiMiyakawa01, bblGallayWayne02,  bblKajikiyaMiyakawa86, bblOliverTiti00,bblMasuda84, bblSchonbek85,bblSchonbek91,bblSchonbek99, bblWiegner87,bblGigaKambe88,bblGigaMiyakawaEtAl88}), the seminal work of Gallay and Wayne~\cite{bblGallayWayne05} proved the existence of a globally stable (infinite energy) vortex in ${\R^2}$, known as the Oseen vortex.
Physically, this means that any ${L^1}$ configuration of vortex patches will eventually combine into a ``giant'' vortex and then dissipate like the linear heat equation.
The main result of this paper is the analogue of this result for an extended Navier-Stokes system where the incompressibility constraint is relaxed.

The equations we study are one of several ``reduced models'' of Grubb and Solonnikov~\cites{bblGrubbSolonnikov91,bblGrubbSolonnikov89}.
This model resurfaced recently in~\cite{bblLiuLiuEtAl07} to analyze a stable and efficient numerical scheme proposed in~\cite{bblJohnstonLiu04}.
The numerical scheme is a time discrete, pressure Poisson scheme which improves both stability and efficiency of computation by replacing the incompressibility constraint with an auxiliary equation to determine the pressure.
The formal time continuous limit of this scheme is the system
\begin{equation}\label{eqnExtendedNS1}
  \begin{beqn}[;C]
  \partial_t u + (u\cdot \nabla)u + \nabla p = \Delta u,\\
  \partial_t d  = \Delta d,\\
  d = \nabla \cdot u,
  \end{beqn}
\end{equation}
where $u$ represents the fluid velocity and $p$ the pressure.
We draw attention to the fact that the usual incompressibility constraint, $d = 0$, in the Navier-Stokes equations has been replaced with an evolution equation for $d$.
Of course, if $d = 0$ at time $0$, then it will remain $0$ for all time and the system~\eqref{eqnExtendedNS1} reduces to the standard incompressible Navier-Stokes equations.

In domains with boundary the system~\eqref{eqnExtendedNS1} has been studied by numerous authors~\cites{bblIyerPegoEtAl12, bblJohnstonWangEtAl14, bblLiuLiuEtAl07, bblLiuLiuEtAl09, bblLiuLiuEtAl10, bblIgnatovaIyerEtAl15} both from an analytical and a numerical perspective.
Boundaries, however, cause production of vorticity in a nontrivial manner and make the long time behavior of the vorticity intractable by current methods.
Thus, we study the system~\eqref{eqnExtendedNS1} in ${\R^2}$ where at least the long time behavior of vorticity when $d = 0$ is now reasonably understood~\cite{bblGallayWayne05}.

Since $d$ approaches $0$ asymptotically as $t \to \infty$, we expect that the long time behavior of solutions to~\eqref{eqnExtendedNS1} should be the same as that of the standard incompressible Navier-Stokes equations.
Indeed, our first result (theorem~\ref{thmBetaZero}) shows that this is the case, \emph{provided} the initial divergence~$d_0$ has mean $0$.
In this case, the entropy constructed in~\cite{bblGallayWayne05} can still be used to show global stability of the Oseen vortex.
Surprisingly, if $d_0$ does not have mean $0$, the nonlinearity contributes to the entropy non-trivially and we are unable to show global stability of a steady solution using this method.
Instead when $d_0$ has non-zero mean, we use methods similar to~\cites{bblRodrigues09} and show existence (but not uniqueness) of a solution that is stable under small perturbations globally in time, provided $d_0$ has a small enough mean.
We are unable to show that this solution is stable under large perturbations.
Further, if $d_0$ has large mean, we are unable to show that this solution is stable even under small perturbations.

\subsection*{Plan of this paper}
In section~\ref{sxnResults} we introduce our notation and state our main results.
Next, in section~\ref{sxnMeanZero} we show that if $\beta = 0$ the Oseen vortex is the global asymptotically stable steady state.
Then, in section~\ref{sxnNonMeanZero}, we study the analogue of this result when $\beta \neq 0$.
We find the analogue of the Oseen vortex in this context, but are unable to show a global stability result like in the case when $\beta = 0$.
We instead show that the solution is globally stable under perturbations that are small in Gaussian weighted spaces.
The proofs in section~\ref{sxnMeanZero} relied on certain heat kernel like bounds for the vorticity and on relative compactness of complete trajectories.
We prove these in sections~\ref{sxnInequalities} and~\ref{sxnCompactness} respectively.
Finally, to ensure our results long time results are not vacuously true, we conclude this paper with section~\ref{sxnWellPosed}, where briefly discuss global well-posedness for the extended Navier-Stokes system in this context.


\section{Statement of results.}\label{sxnResults}

For our purposes it is more convenient to formulate~\eqref{eqnExtendedNS1} in terms of the vorticity
\begin{equation*}
  \omega \defeq \curl u = \partial_1 u_2 - \partial_2 u_1.
\end{equation*}
Taking the curl of~\eqref{eqnExtendedNS1} gives the system
\begin{gather}
  \label{eqnENSOmega}
    \partial_t \omega + \nabla\cdot( u\omega) = \Delta \omega,
  \\
  \label{eqnENSd}
    \partial_t d = \Delta d,
  \\
  \label{eqnENSu}
    u = \BS \omega + \gradinv d,
\end{gather}
where, $K_{BS}$ and $\gradinv$ are defined by
\begin{equation*}
    K_{BS}(x) \defeq \frac{1}{2\pi} \frac{x^\perp}{|x|^2},
    \qquad\text{and}\qquad
    \gradinv f \defeq \frac{1}{2\pi} \frac{x}{|x|^2} \ast f.
\end{equation*}
Equation~\eqref{eqnENSu} simply recovers $u$ as the unique vector field with divergence $d$ and curl $\omega$.
When $d = 0$, this is simply the Biot-Savart law, hence our notation~$K_{BS}$.

Formally integrating equations~\eqref{eqnENSOmega} and~\eqref{eqnENSd}, one immediately sees that the quantities
\begin{equation}\label{eqnMean}
  \alpha \defeq \int_{\R^2} \omega(x, t) \, dx,
  \qquad\text{and}\qquad
  \beta \defeq \int_{\R^2}  d(x, t) dx
\end{equation}
are constant in time.
The value of $\alpha$ in the long term vortex dynamics is mainly that of a scaling factor and not too important.
The value of $\beta$, however, affects the dynamics (or at least our proofs) dramatically.
We begin by studying the long term vortex dynamics when $\beta = 0$.
In this case we show that the Oseen vortex
defined by
\begin{equation*}
  \tilde \omega(x, t) = \frac{1}{t} G\paren[\Big]{ \frac{x}{\sqrt{t}} }
\end{equation*}
is the globally stable solution,
where 
\begin{equation*}
  G(x) \defeq \frac{1}{4\pi} \exp\paren[\Big]{ \frac{-|x|^2}{4} }
\end{equation*}
is the Gaussian.
We state this as our first result.

\begin{theorem}\label{thmBetaZero}
Suppose $\omega_0$, $d_0 \in L^1(\R^2)$ are such that $\abs{x} d_0 \in L^1({\R^2})$ and $\beta = 0$.
If the pair $(\omega, d)$ solves the system~\sysOmegaD with initial data $(\omega_0, d_0)$ then for any $p \in [1, \infty]$ we have
\begin{equation}\label{eqnOmegaDDecay}
  \lim_{t\to\infty} 
    t^{1-1/p} \left\| \omega(t, \cdot) - \alpha \tilde \omega(t, \cdot) \right\|_{L^p}
  = 0
  \quad\text{and}\quad
  \sup_{t \geq 0}
    \;
    t^{\frac{3}{2}- \frac{1}{p}} \|d(t, \cdot)\|_{L^p}
    < \infty.
\end{equation}
\end{theorem}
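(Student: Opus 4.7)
The plan is to adapt the Gallay--Wayne strategy~\cite{bblGallayWayne05} to this setting, treating the divergent contribution $\gradinv d$ in the velocity~\eqref{eqnENSu} as a perturbation that vanishes in the long--time limit thanks to $\beta=0$.

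\emph{Step 1: the $d$ estimate.} Since $d$ solves the pure heat equation, $d(t,\cdot)=K_t\ast d_0$. The hypothesis $\beta=0$ allows us to rewrite
\[
d(t,x)=\int_{\R^2}\bigl[K_t(x-y)-K_t(x)\bigr]d_0(y)\,dy,
\]
and the mean-value bound $|K_t(x-y)-K_t(x)|\leq|y|\,\|\nabla K_t\|_{L^\infty}$ gives $\|d(t)\|_{L^\infty}\lesssim t^{-3/2}\,\||x|d_0\|_{L^1}$. Similarly, using $\|\nabla K_t\|_{L^1}\lesssim t^{-1/2}$ yields $\|d(t)\|_{L^1}\lesssim t^{-1/2}\,\||x|d_0\|_{L^1}$. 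Interpolation gives $\|d(t)\|_{L^p}\lesssim t^{-3/2+1/p}\,\||x|d_0\|_{L^1}$ for $t\geq 1$, while for small $t$ the standard $L^1\to L^p$ heat smoothing ensures $t^{3/2-1/p}\|d(t)\|_{L^p}=O(t^{1/2})$. This settles the second assertion of~\eqref{eqnOmegaDDecay}.

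\emph{Step 2: rescale to self-similar variables.} Following~\cite{bblGallayWayne05}, set $w(\xi,\tau)=e^\tau\omega(e^{\tau/2}\xi,e^\tau)$ and analogously for the velocity. Equation~\eqref{eqnENSOmega} becomes
\[
\partial_\tau w+(\BS w+v_d)\cdot\nabla w=\mathcal{L}w,\qquad \mathcal{L}w=\Delta w+\tfrac{1}{2}\,\xi\cdot\nabla w+w,
\]
where $v_d$ is the rescaled $\gradinv d$. Step~1 translates into $\|v_d(\cdot,\tau)\|\to 0$ (like $e^{-\tau/2}$) in appropriate $L^p$ and Gaussian-weighted norms, so the $\tau\to\infty$ dynamics is governed by the autonomous incompressible 2D Navier--Stokes vorticity equation in self-similar variables.

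\emph{Step 3: compactness and identification of the $\omega$-limit.} Using the heat-kernel-like estimates of section~\ref{sxnInequalities}, one shows that $\|w(\cdot,\tau)\|_{L^1\cap L^p}$ is uniformly bounded in $\tau$. The compactness result of section~\ref{sxnCompactness} then gives relative compactness of the orbit $\{w(\cdot,\tau):\tau\geq 0\}$ in $L^1(\R^2)$. Any $\omega$-limit point $\bar w$ is the time-zero slice of a complete bounded trajectory of the \emph{autonomous} limiting equation (because $v_d$ vanishes in the limit), along which the Gallay--Wayne entropy is nonincreasing; LaSalle's invariance principle forces $\bar w$ to be a steady state of $\mathcal{L}-(\BS\bar w)\cdot\nabla$. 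Since $\int \bar w=\alpha$ is preserved and the only such steady state of mass $\alpha$ is $\alpha G$, we conclude $w(\cdot,\tau)\to\alpha G$ in $L^1$. Interpolation with the $L^p$ bounds upgrades this to the first assertion of~\eqref{eqnOmegaDDecay}.

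The principal obstacle lies in Step~3: one must verify that the perturbation $v_d\cdot\nabla w$ does not spoil the entropy mechanism, i.e.\ that its contribution to $\tfrac{d}{d\tau}H[w(\tau)]$ is integrable in $\tau$ and that any limiting trajectory is genuinely a weak solution of the autonomous equation. This uses precisely the improved decay of $v_d$ afforded by $\beta=0$; without it, $\|v_d\|$ does not decay and we no longer expect convergence to an Oseen vortex, which is exactly why the case $\beta\neq 0$ requires the separate treatment of section~\ref{sxnNonMeanZero}. A secondary subtlety is that the Gallay--Wayne entropy is naturally defined for nonnegative densities, so for signed $w$ one must either track the positive and negative parts separately or invoke a suitable extension to signed measures.
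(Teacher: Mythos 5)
Your overall strategy --- the self-similar rescaling, the mean-zero heat-kernel estimate for $d$, relative compactness of the orbit in $L^1$, and identification of the $\omega$-limit set via the relative entropy and LaSalle --- is exactly the route the paper takes, and Steps 1 and 2 are fine. But two of the points you defer are genuine gaps rather than technicalities. The sign issue you call a ``secondary subtlety'' is load-bearing: the entropy $\int W\ln(W/G)$ is undefined for signed $W$, and neither of your suggested fixes works as stated (the positive and negative parts evolve under coupled equations and do not individually converge to Gaussians, and there is no extension of this entropy to signed data with the required monotonicity). The paper resolves this with a separate Lyapunov argument (lemma~\ref{lmaSign}): the functional $\norm{W}_{L^1}+\norm{D}_{L^1}$ is nonincreasing, and by the strong maximum principle it decreases \emph{strictly} unless $W$ and $D$ each have constant sign; on a complete trajectory inside the $\omega$-limit set the $L^1$ norm must be constant (compare the $\alpha$- and $\omega$-limits of that complete trajectory, which exist by two-sided relative compactness in $L^2(m)$, lemma~\ref{lmaWCompact}), so every limit point has constant sign. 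Only after this step does the entropy computation apply. You need to supply this argument or an equivalent one.

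The second gap is the endpoint $p=\infty$: interpolating $L^1$ convergence against a uniform $L^\infty$ (or Gaussian pointwise) bound yields convergence only for $p<\infty$. The paper handles $p=\infty$ separately (lemma~\ref{lmaLinf}) by writing $R=W-\alpha G$ via Duhamel over a unit time step with the semigroup $e^{\tau\cL}$ and using its $L^1\to L^\infty$ smoothing together with the already-established $L^1$ convergence. Finally, a structural remark: the ``principal obstacle'' you identify --- integrability in $\tau$ of the contribution of $v_d\cdot\nabla w$ to $\partial_\tau H$ --- does not actually arise in the argument you sketch. If you work, as the paper does, entirely on the $\omega$-limit set, then $D\equiv 0$ there identically (either by your Step 1 decay or by the constant-sign lemma combined with $\int D=0$), and the entropy is only ever differentiated along complete trajectories of the autonomous system; no quantitative control of the entropy along the original, non-autonomous trajectory is needed.
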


When $\beta \neq 0$, we are unable to prove a result as strong as theorem~\ref{thmBetaZero}, because a key entropy estimate is destroyed by the nonlinearity.
To formulate our result in this situation, we first identify the analogue of the Oseen vortex.
We show (in section~\ref{sxnWs}) that the radial self-similar solutions to the system~\sysOmegaD are obtained by rescaling $W_s = W_s(\beta)$, where $W_s$ is the unique, radially symmetric, solution of the ODE
\begin{equation}\label{eqnWs}
  \frac{\partial_r W_s}{W_s} = \frac{-r}{2}
    + \frac{\beta}{2\pi r} \paren[\big]{1 - e^{-r^2/4}},
    \quad
    \text{with normalization }
    \int_{\R^2} W_s \, dx = 1.
\end{equation}
A direct calculation shows that the pair $(\alpha \tilde \omega_\beta, \tilde d_\beta)$ defined by
\begin{equation}\label{eqnWsInXandT}
  \tilde \omega_\beta( x, t ) = \frac{1}{t} W_s\paren[\Big]{\frac{x}{\sqrt{t}} },
  \qquad
  \tilde d_\beta( x, t ) = \frac{\beta}{t} G\paren[\Big]{\frac{x}{\sqrt{t}} },
\end{equation}
is a radially symmetric self-similar solution to the system~\sysOmegaD, making $\tilde \omega_\beta$ the analogue of the Oseen vortex.
When $\beta = 0$, we see $W_s$ is exactly the Gaussian $G$, but this is no longer true when $\beta \neq 0$.
When $\beta < 4\pi$ the shape of $W_s$ is similar to that of the Gaussian in that $W_s$ attains it's maximum at $0$ and is strictly decreasing for $r > 0$.
When $\beta > 4 \pi$, however, $W_s$ attains its maximum at some $r_0 > 0$ and the profile looks like that of a ``vortex ring'' (see figure~\ref{fgrWs}).
For any $\beta \neq 0$, the interaction between $W_s$ and the nonlinearity is largely responsible for the failure in our proof of theorem~\ref{thmBetaZero}.
\begin{figure}[htb]
  \centering
  \includegraphics[width=.8\linewidth]{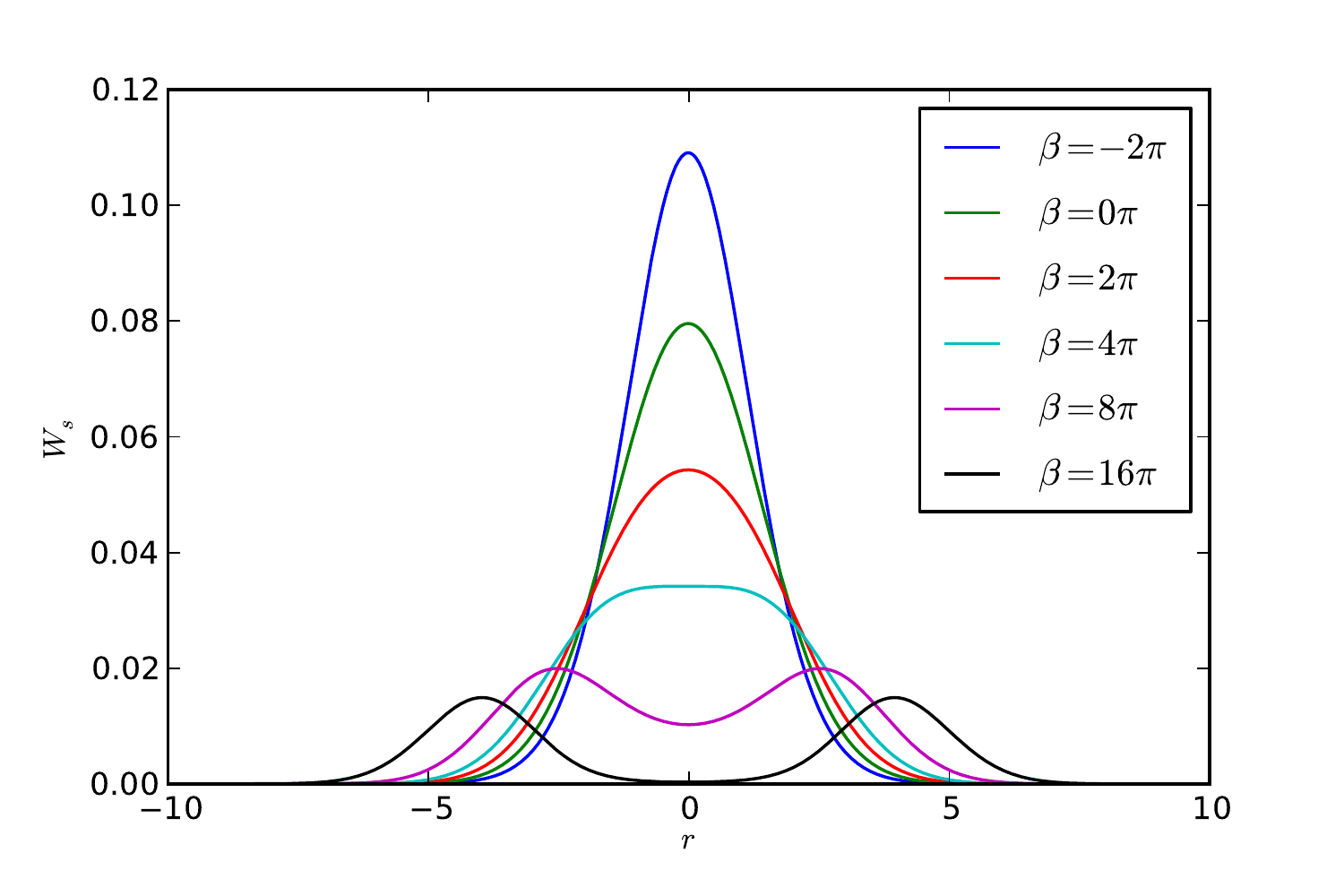}
  \caption{Plots of $W_s$ vs $r$ for $\beta \in \set{-2\pi, 0, 2\pi, 4\pi, 8\pi, 16\pi}$.}\label{fgrWs}
\end{figure}

Our main result when $\beta \neq 0$ uses the Gaussian weighted spaces appearing in~\cites{bblGallayWayne02, bblGallayWayne05, bblRodrigues09} and shows that the solution $(\alpha \tilde \omega_\beta, \tilde d_\beta )$ is stable under small perturbations.
Explicitly, define the weighted spaces $L^2_w$ by
\begin{equation}\label{eqnWeightedSpaces}
  L^2_w \defeq \{f \in L^2(\R^2): \|f\|_w < \infty\},
  \quad\text{where}\quad
  \|f\|_w^2 \defeq \int G(x)^{-1} |f(x)|^2 dx.
\end{equation}
Now our stability result when $\beta \neq 0$ is as follows:

\begin{theorem}\label{thmBetaNonZero}
Let $t_0 > 0$ and $(\omega, d)$ solve the system~\sysOmegaD on the time interval $[t_0, \infty)$.
For any $\gamma \in (0, 1/2)$, there exists $\epsilon_0 = \epsilon_0( \gamma) > 0$ such that if
\[
  |\beta|(1+|\alpha|)
  +
  \norm[\big]{ \omega(t_0) - \alpha \tilde \omega_\beta(t_0) }_w
  +
  \norm[\big]{ d(t_0) - \tilde d_\beta(t_0) }_w
  \leq \epsilon_0,
\]
then
\begin{equation*}
  \lim_{t \to \infty}
    t^\gamma
    \norm{ \tilde G(t)^{-1/2} \paren{\omega(t) - \alpha \tilde \omega_\beta(t) } }_w
  = 0
\end{equation*}
and
\begin{equation*}
\sup_{t\geq 0}\:
    t^{1/2}
    \norm{ \tilde G(t)^{-1/2} \paren{d(t) - \tilde d_\beta(t) } }_w <\infty.
\end{equation*}
Here $\tilde G(x, t) = G( x / \sqrt{t} )$ is the rescaled Gaussian.
\end{theorem}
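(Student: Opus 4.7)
\emph{Setup.} The plan is to pass to self-similar variables, linearize about $(\alpha W_s,\beta G)$, and exploit the spectral gap of the Fokker--Planck operator $\cL := \Delta + \tfrac{\xi}{2}\cdot\nabla + 1$ on the mean-zero subspace of $L^2_w$. Using the scaling symmetry $(\omega,d)(x,t)\mapsto \lambda^2(\omega,d)(\lambda x,\lambda^2 t)$ of \sysOmegaD we may assume $t_0 = 1$. Set $\xi = x/\sqrt{t}$, $\tau = \log t$, $W(\xi,\tau) = t\omega(\sqrt{t}\xi,t)$, $D(\xi,\tau) = td(\sqrt{t}\xi,t)$; the system becomes $\partial_\tau D = \cL D$ and $\partial_\tau W + U\cdot\nabla W + DW = \cL W$ with $U = \BS W + \gradinv D$, and the self-similar solutions become stationary $(\alpha W_s,\beta G)$ with base velocity $\bar U := \alpha\BS W_s + \beta\gradinv G$. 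Writing $W = \alpha W_s + w$, $D = \beta G + e$, $v := \BS w + \gradinv e$, conservation of $\int\omega$ and $\int d$ forces $\int w(\tau) = \int e(\tau) = 0$ for all $\tau$. Subtracting the profile equations yields $\partial_\tau e = \cL e$ and $\partial_\tau w = \Lambda w + F(e) + N(w,e)$, where
\begin{align*}
  \Lambda w &:= \cL w - \bar U\cdot\nabla w - \alpha(\BS w)\cdot\nabla W_s - \beta G w,\\
  F(e) &:= -\alpha(\gradinv e)\cdot\nabla W_s - \alpha W_s e,\\
  N(w,e) &:= -v\cdot\nabla w - ew.
\end{align*}

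\emph{Linear analysis.} Conjugating $\cL$ by $G^{1/2}$ produces the shifted 2D harmonic oscillator $\Delta + \tfrac12 - \tfrac{|\xi|^2}{16}$, whose spectrum on $L^2(\R^2)$ is $\{0,-\tfrac12,-1,\ldots\}$ with one-dimensional zero eigenspace $\R G$; hence on the mean-zero subspace of $L^2_w$ one has $\|e^{\tau\cL}\| \leq Ce^{-\tau/2}$, which gives $\|e(\tau)\|_w \leq Ce^{-\tau/2}\|e(0)\|_w$ immediately. For $\Lambda$, decompose $\Lambda = \Lambda_{GW} + R_\beta$, where $\Lambda_{GW}$ is exactly the Oseen-vortex linearization of \cite{bblGallayWayne05} (obtained from $\Lambda$ by setting $\beta = 0$ and $W_s = G$), whose spectral gap of $1/2$ on the mean-zero subspace of $L^2_w$ is valid for every $\alpha$, and $R_\beta$ collects the $\beta$-dependent corrections $-\beta(\gradinv G)\cdot\nabla w$, $-\beta G w$, $-\alpha\BS(W_s - G)\cdot\nabla w$, and $-\alpha(\BS w)\cdot\nabla(W_s - G)$. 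Since $\gradinv G(\xi) = \tfrac{\xi}{2\pi|\xi|^2}(1 - e^{-|\xi|^2/4})$ is a bounded vector field and the ODE \eqref{eqnWs} gives $|W_s - G| + |\nabla(W_s - G)| \lesssim |\beta|\,G$ for $|\beta|$ small, $R_\beta$ is $\Lambda_{GW}$-relatively bounded with relative bound $O(|\beta|(1 + |\alpha|))$. Analytic perturbation theory then preserves any spectral gap $\gamma < 1/2$ for $\Lambda$, giving $\|e^{\tau\Lambda}w(0)\|_w \leq Ce^{-\gamma\tau}\|w(0)\|_w$ on the mean-zero subspace.

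\emph{Duhamel, closure, and main obstacle.} In the Banach space $X_\gamma := \{w : \sup_{\tau\geq 0}e^{\gamma\tau}\|w(\tau)\|_w < \infty\}$, the Duhamel formula $w(\tau) = e^{\tau\Lambda}w(0) + \int_0^\tau e^{(\tau - s)\Lambda}\bigl(F(e(s)) + N(w(s),e(s))\bigr)\,ds$ becomes a contractive fixed-point in a small ball. The forcing satisfies $\|F(e(s))\|_w \lesssim |\alpha|\,\|e(s)\|_w \lesssim |\alpha|\epsilon_0 e^{-s/2}$, where the singularity of $\gradinv$ is absorbed by the Gaussian decay of $\nabla W_s$ via a Hardy-type estimate as in \cite{bblRodrigues09}, and the nonlinearity obeys $\|N(w,e)\|_w \lesssim (\|w\|_w + \|e\|_w)\|w\|_{H^1_w}$ by standard Biot--Savart and $\gradinv$ bounds on Gaussian-weighted spaces, the lost derivative being recovered by the parabolic smoothing $(\tau - s)^{-1/2}$ of $e^{(\tau - s)\Lambda}$. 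A contraction in a ball of radius $O(\epsilon_0)$ in $X_\gamma$ yields $\|w(\tau)\|_w \lesssim \epsilon_0 e^{-\gamma\tau}$, and undoing the self-similar rescaling produces both conclusions of the theorem. The hard part is the linear step: propagating the Gallay--Wayne spectral gap under the genuinely unbounded first-order perturbation $-\beta(\gradinv G)\cdot\nabla w$ is what forces the smallness hypothesis $|\beta|(1 + |\alpha|) \leq \epsilon_0$ and restricts the result to the small-$\beta$ regime rather than a global-in-$\beta$ statement.
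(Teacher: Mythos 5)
Your overall architecture---pass to self-similar variables, subtract the profile $(\alpha W_s,\beta G)$, and exploit the $1/2$ gap of $\cL$ on mean-zero functions in $L^2_w$---matches the paper's, and your perturbation equation is the same as \eqref{eqnPerturbative}. The structural difference is in the linear step: the paper never builds a semigroup for the full linearization $\Lambda$. It runs a direct weighted energy estimate, pairing the equation with $G^{-1}W_p$, using the coercivity \eqref{eqnLSpectral} for $\cL$ \emph{alone}, and absorbing every remaining linear and nonlinear term into the dissipation $\tfrac{1-2(\gamma+\epsilon)}{2}\bigl[\tfrac13\norm{\nabla W_p}_w^2+\tfrac1{32}\norm{\xi W_p}_w^2\bigr]$ via the smallness of $|\beta|(1+|\alpha|)$ and the a priori smallness of the trajectory supplied by lemma~\ref{lmaExistence}. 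Your Duhamel fixed point in $X_\gamma$ is a legitimate alternative to that last ingredient and would give local existence for free.

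The genuine gap is the assertion that, because $R_\beta$ is $\Lambda_{GW}$-relatively bounded with relative bound $O(|\beta|(1+|\alpha|))$, ``analytic perturbation theory'' yields $\norm{e^{\tau\Lambda}}\leq Ce^{-\gamma\tau}$. This does not follow. $\Lambda_{GW}$ is not self-adjoint (its $\alpha$-part is only skew in $L^2_w$, by the computation behind \eqref{eqnNonMagic}), so even if the spectrum remained in $\{\operatorname{Re}\lambda\leq-\gamma\}$, a spectral gap does not convert into a semigroup decay rate for a non-normal operator without uniform resolvent bounds; moreover, relative boundedness with small relative bound controls spectra only in the self-adjoint/sectorial setting with quantitative resolvent estimates, and $R_\beta$ contains the genuinely first-order term $-\beta(\gradinv G)\cdot\nabla w$. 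The correct repair is precisely the paper's mechanism: bound the quadratic form $\ip{w}{R_\beta w}_w$ by $C|\beta|(1+|\alpha|)\bigl(\norm{w}_w^2+\norm{\nabla w}_w^2+\norm{\xi w}_w^2\bigr)$ and absorb it into the dissipative part of \eqref{eqnLSpectral}, which turns your ``decay of $e^{\tau\Lambda}$'' into the differential inequality $\partial_\tau\norm{W_p}_w^2+2\gamma\norm{W_p}_w^2\leq Ce^{-\tau}$---at which point you have reproduced the paper's energy argument. A secondary inaccuracy: the pointwise claim $|W_s-G|+|\nabla(W_s-G)|\lesssim|\beta|\,G$ fails at infinity, since integrating \eqref{eqnWs} gives $W_s(r)\sim c_\beta\,G(r)\,r^{\beta/\pi}$ as $r\to\infty$; only the weighted-$L^2$ statement \eqref{eqnWsMinusG}, which is what the paper proves and what your estimates actually require, is available.
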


When $\beta = 0$, the function $\tilde \omega_\beta = \tilde \omega$, and theorem~\ref{thmBetaZero} proves stability of $\tilde \omega$ (albeit under a different norm) without any smallness assumption on the perturbation.

Finally, to ensure that theorems~\ref{thmBetaZero} and~\ref{thmBetaNonZero} are not vacuously true, we establish global existence of solutions to the system~\sysOmegaD.
While a little work has been done on this system in $\R^2$, the existence and uniqueness theory is not altogether far from the classical theory, and we address this next.

\begin{proposition}\label{ppnExistence}
Define the space $X$ to be either $L^1$ or $L^2_w$.
If $\omega_0,d_0 \in X$, then there exists a unique time global solution $(\omega, d)$ to the system~\sysOmegaD in $X$ with initial data $(\omega_0, d_0)$.
\end{proposition}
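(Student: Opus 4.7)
The plan is to exploit the fact that the equation for $d$ decouples from $\omega$. First I would solve $d(t) = e^{t\Delta}d_0$ explicitly; this yields standard $L^p$--$L^q$ decay estimates when $d_0 \in L^1$ and the Gaussian-weighted analogues (essentially from~\cite{bblGallayWayne02}) when $d_0 \in L^2_w$. With $d$ treated as a known forcing, the vorticity equation becomes
\begin{equation*}
  \partial_t \omega + u\cdot\nabla\omega + d\,\omega = \Delta\omega,
  \qquad u = \BS \omega + \gradinv d,
\end{equation*}
where I have expanded $\dv(u\omega) = u\cdot\nabla\omega + d\,\omega$. This is the standard two-dimensional Navier-Stokes vorticity equation perturbed by two linear terms driven by $d$.

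For local existence I would run a contraction mapping argument on the mild formulation
\begin{equation*}
  \omega(t) = e^{t\Delta}\omega_0 - \int_0^t \nabla e^{(t-s)\Delta}\cdot (u\omega)(s)\,ds.
\end{equation*}
When $X = L^1$, I would follow Ben-Artzi--Brezis and work in $C([0,T];L^1)$ together with the auxiliary bound $\sup_{t\in(0,T]} t^{1-1/p}\|\omega(t)\|_{L^p} < \infty$ for some $p>1$; this extra regularity is precisely what is needed so that Hardy--Littlewood--Sobolev makes $u$ integrable enough to control the nonlinearity. The perturbative contributions from $d$ are in fact easier to handle than the Navier-Stokes nonlinearity, because $d$ is immediately smoothed by the heat semigroup and its size is controlled a priori. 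When $X = L^2_w$, I would use the weighted heat semigroup estimates developed in~\cite{bblGallayWayne02} together with the fact that $\BS$ and $\gradinv$ map $L^2_w$ continuously into a space in which the product $u\omega$ can be paired with $\nabla e^{(t-s)\Delta}$ via the known smoothing bounds.

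To pass from local to global existence it suffices to control $\|\omega(t)\|_X$ on every finite interval. In $L^1$ the triangle inequality and $\dv u = d$ give
\begin{equation*}
  \frac{d}{dt}\|\omega(t)\|_{L^1} \leq \|d(t)\|_{L^\infty}\|\omega(t)\|_{L^1},
\end{equation*}
and Gronwall closes because $\|d(t)\|_{L^\infty}$ is locally integrable in $t$ on $(0,\infty)$, the near-zero singularity being absorbed by the local theory itself. An analogous energy estimate in $L^2_w$ (paralleling the one used in~\cite{bblGallayWayne05} and in section~\ref{sxnNonMeanZero}) provides the corresponding a priori bound. Uniqueness then follows by applying the same Gronwall argument to the difference of two putative solutions with the same initial data.

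The step I expect to be most delicate is the $L^1$ local theory: since $\BS\omega$ belongs only to weak-$L^2$ when $\omega \in L^1$, the product $u\omega$ is not immediately integrable, and one must exploit the smoothing of $e^{t\Delta}$ in the Duhamel integral very carefully. Fortunately the extra term $\gradinv d$ has exactly the same convolution-kernel scaling as $\BS\omega$, so the Ben-Artzi--Brezis fixed-point scheme adapts essentially verbatim, with the $d$-dependent pieces appearing only as lower-order perturbations.
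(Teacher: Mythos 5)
Your proposal matches the paper's own sketch in section~\ref{sxnWellPosed}: the divergence is solved explicitly by the heat semigroup, and the vorticity equation is then handled exactly as in Ben-Artzi/Brezis (for $L^1$) and Gallay--Wayne/Rodrigues (for $L^2_w$) with $d$ entering only as a lower-order forcing, the paper iterating in $\omega$ alone rather than running a single contraction. The only refinement worth noting is that, because $\nabla\cdot(u\omega)$ is in divergence form, $\|\omega(t)\|_{L^1}$ is in fact non-increasing (split $\omega_0$ into positive and negative parts and use conservation of mass, as in lemma~\ref{e_vorticity_decay}), which avoids your Gronwall step entirely --- a welcome simplification since its weight $\|d(t)\|_{L^\infty}\sim t^{-1}$ is not integrable at $t=0$.
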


The proof of this proposition follows a similar structure to results in~\cites{bblBen-Artzi94,bblBrezis94,bblGallayWayne02,bblGallayWayne05,bblKato94,bblRodrigues09}, and we do not provide a complete proof.
However, for convenience of the reader, we sketch a brief outline in section~\ref{sxnWellPosed}.

\section{Global stability for mean zero initial divergence.}\label{sxnMeanZero}

We devote this section to proving theorem~\ref{thmBetaZero}.
The main idea in the case where $\beta = 0$ is the same as that used by Gallay and Wayne in~\cite{bblGallayWayne05}.
However, to use this method, certain compactness criteria and vorticity bounds need to be established.
In order to present a self contained treatment, we begin with the heart of the matter (following~\cite{bblGallayWayne05}), and only state the compactness criteria where required.
We postpone the proofs of the vorticity bounds and these criteria to sections~\ref{sxnInequalities} and~\ref{sxnCompactness} respectively.

\subsection{Reformulation using self-similar coordinates.}\label{sxnSelfSimilar}

We begin by reformulating theorem~\ref{thmBetaZero} in the natural self-similar coordinates associated to~\eqref{eqnExtendedNS1}.

\begin{proof}[Proof of theorem~\ref{thmBetaZero}]
Define the coordinates $\xi$ and $\tau$ by
\begin{equation}
	\xi \defeq \frac{x}{\sqrt{t}},
	\quad
	\tau \defeq \log(t),
\end{equation}
and the rescaled velocity, vorticity, and divergence by
\begin{equation}\label{eqnVarChange}
     U( \xi, \tau ) \defeq \sqrt{t} u( x, t ),
    \quad
    W(\xi, \tau) \defeq t \omega(x,t),
    \quad\text{ and }\quad
    D(\xi, \tau) \defeq t d(x,t).
\end{equation}
With this transformation the system~\sysOmegaD becomes
\begin{gather}
  \label{eqnW}
    \partial_\tau W + \nabla\cdot (UW) = \cL W,
  \\
  \label{eqnD}
    \partial_\tau D = \cL D,
  \\
  \label{eqnU}
    U = \BS W + \gradinv D.
\end{gather}
where $\mathcal{L}$ is the operator defined by
\begin{equation}\label{eqnCLDef}
  \cL f \defeq \lap f + \frac{1}{2} \xi \cdot \nabla f + f.
\end{equation}

In the rescaled variables we will prove the following result:
\begin{proposition}\label{ppnMeanZero}
Let $(W, D)$ solve the system~\sysWD with initial data $(W_0, D_0)$ such that $W_0, (1 + \abs{\xi})D_0 \in L^1(\mathbb{R}^2)$.
If $\alpha = \int W_0 \, d\xi$ and $\beta = \int D_0 \, d\xi = 0$, then
\begin{equation}\label{eqnMeanZeroConv}
	\lim_{\tau\to\infty} \left\| W - \alpha G\right\|_{L^p} = 0
	\quad\text{and}\quad
	\sup_{\tau\geq 0} \; e^{\frac{\tau}{2}}
	  \left\| D \right\|_{L^p} < \infty
\end{equation}
for any $p \in [1, \infty]$.
\end{proposition}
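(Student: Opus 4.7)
The proposition splits naturally into two claims: the bound on $D$ is a linear heat-kernel estimate, while the convergence $W \to \alpha G$ is established by adapting the Lyapunov/compactness strategy of~\cite{bblGallayWayne05} to absorb the additional term coming from $\gradinv D$. My plan is to dispose of the $D$-bound first and then use it as a vanishing perturbation to the unforced rescaled Navier-Stokes vorticity equation for $W$.

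For the $D$ bound, note that $D$ evolves by $\partial_\tau D = \cL D$, which is simply the rescaled heat equation; equivalently $\partial_t d = \Delta d$ in the original variables. Since $\int d_0 \, dx = \beta = 0$, subtracting $K_t(x)\int d_0$ from the heat-kernel representation of $d$ yields
\[
  d(x,t) = -\int_0^1 \int_{\R^2} \nabla K_t(x - sy) \cdot y \; d_0(y)\, dy\, ds,
\]
where $K_t$ is the $2$D heat kernel. Minkowski's inequality together with the scaling $\|\nabla K_t\|_{L^p(\R^2)} \lesssim t^{-3/2 + 1/p}$ then gives $\|d(t)\|_{L^p} \lesssim t^{-3/2 + 1/p}\,\| |y| d_0 \|_{L^1}$, using precisely the hypothesis $|\xi| D_0 \in L^1$. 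Rescaling via~\eqref{eqnVarChange} produces $\|D(\tau)\|_{L^p} \lesssim e^{-\tau/2}$, which is the second assertion in~\eqref{eqnMeanZeroConv}.

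For the $W$ claim, I would rewrite~\eqref{eqnW}--\eqref{eqnU} as
\[
  \partial_\tau W + \nabla \cdot \bigl((\BS W) W\bigr) = \cL W - \nabla \cdot \bigl((\gradinv D) W\bigr),
\]
so the right-hand side is the unforced rescaled Navier-Stokes vorticity equation plus a forcing whose size, by the step above and the heat-kernel--type bounds for $\gradinv D$ to be developed in section~\ref{sxnInequalities}, decays exponentially in $\tau$. The next three steps are: (i) use the estimates from section~\ref{sxnInequalities} to secure uniform-in-$\tau$ bounds for $W(\tau)$ in $L^p$ and Gaussian-weighted norms; (ii) apply the compactness criterion from section~\ref{sxnCompactness} to conclude that the trajectory $\{W(\tau) : \tau \geq 1\}$ is relatively compact in $L^1(\R^2)$; (iii) given any $\tau_n \to \infty$, pass to a subsequential limit of the time-translated functions $W(\cdot, \tau_n + \sigma)$ and obtain an entire trajectory $W_\infty$ which, because the forcing vanishes in the limit, solves $\partial_\sigma W_\infty + \nabla\cdot\bigl((\BS W_\infty) W_\infty\bigr) = \cL W_\infty$ with $\int W_\infty \, d\xi = \alpha$ for all $\sigma \in \R$.

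To finish, I would invoke the Lyapunov/entropy machinery of~\cite{bblGallayWayne05} on this limiting equation to conclude that every such entire, mass-$\alpha$ trajectory must be identically $\alpha G$. Uniqueness of the $\omega$-limit point then upgrades subsequential convergence to $W(\tau) \to \alpha G$ in $L^1$, and interpolation against the uniform $L^\infty$ bound from section~\ref{sxnInequalities} extends this to every $p \in [1,\infty]$. The main obstacle is ensuring the Gallay-Wayne argument survives the $\gradinv D$ perturbation: one must verify that its $e^{-\tau/2}$ decay is integrable fast enough that the uniform bounds and compactness of step (ii) hold globally in $\tau$, and that, after translating by $\tau_n$, the forcing truly drops out in the limit so that the $\omega$-limit set of~\cite{bblGallayWayne05} characterizes all subsequential limits. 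Once these bookkeeping points are settled, the rest of the argument is a direct translation of the mean-zero case already in the literature.
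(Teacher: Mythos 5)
Your overall strategy is essentially the one the paper follows: the $D$-bound is exactly lemma~\ref{d_estimates} (heat kernel applied to mean-zero data with a first moment), and the $W$-convergence goes through relative compactness of the trajectory in $L^1$, a Gaussian pointwise bound placing the limit points in $L^2(m)$, and the Gallay--Wayne characterization of entire trajectories. The one structural difference is that you discard the divergence by letting the forcing $\nabla\cdot\bigl((\gradinv D)W\bigr)$ vanish along time-translates, whereas the paper keeps $(W,D)$ as a coupled system, takes the $\omega$-limit set, and uses an $L^1$ Lyapunov/sign argument (lemma~\ref{lmaSign}) to conclude $D\equiv 0$ on complete trajectories before running the entropy argument. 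Your route is legitimate and arguably shorter for the $D$-component, since the exponential decay of $\norm{D(\tau)}_{L^p}$ is already in hand; just make sure the Gaussian upper bound on $W$ survives the passage to the limit so that the entire trajectory is genuinely bounded in $L^2(m)$ for $m$ large, which is what the Gallay--Wayne characterization requires.

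There is, however, one genuine gap: the case $p=\infty$. The interpolation $\norm{f}_{L^p}\le\norm{f}_{L^1}^{1/p}\norm{f}_{L^\infty}^{1-1/p}$ upgrades $L^1$-convergence to $L^p$-convergence only for $p<\infty$; at $p=\infty$ the exponent on the $L^1$ factor is zero, and a uniform $L^\infty$ bound gives no decay at all. The paper handles this separately (lemma~\ref{lmaLinf}) by writing $R=W-\alpha G$ via Duhamel over $[\tau-1,\tau]$ and using the $L^1\to L^\infty$ smoothing of the semigroup $S(1)=e^{\cL}$, so that $\norm{S(1)R(\tau-1)}_{L^\infty}\le C\norm{R(\tau-1)}_{L^1}\to 0$, with the nonlinear and divergence terms on $[\tau-1,\tau]$ controlled by the Biot--Savart and $\gradinv$ kernel bounds. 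You need an argument of this type --- some smoothing mechanism converting the already-established $L^1$ decay into $L^\infty$ decay --- to close the $p=\infty$ case; as written, your last step does not deliver it.
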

Undoing the change of variables immediately yields theorem~\ref{thmBetaZero}.
\end{proof}

Before proving proposition~\ref{ppnMeanZero} we pause momentarily to explain why the proof in this case is similar to the proof in~\cite{bblGallayWayne05} for the standard Navier-Stokes equations.
The only mean zero function $D$ that decays sufficiently at infinity and is an equilibrium solution to~\eqref{eqnD} is the $0$ function, in which case the system~\sysWD reduces to the standard Navier-Stokes equations in self-similar coordinates.
Thus, when $\beta = 0$, the long time dynamics of the system~\sysWD should be similar to that of the standard Navier-Stokes equations (in self-similar coordinates).
Indeed, as we show below, the key step of the proof in~\cite{bblGallayWayne05} goes through almost unchanged.
Of course, the required bounds and compactness estimates leading up to this still require work to prove and, for clarity of presentation, we postpone their proofs to sections~\ref{sxnInequalities} and~\ref{sxnCompactness}.

The proof of proposition~\ref{ppnMeanZero} consists of two main steps.
The first step is to establish relative compactness of trajectories to the system~\sysWD in the space $L^1$ and is our next lemma.

\begin{lemma}\label{lmaCompactness}
Suppose that $W$ and $D$ solve the system~\sysWD in $C^0([0,\infty), L^1(\R^2)\times L^1(\R^2))$.
Then the trajectory $\{(W(\tau),D(\tau))\}_{\tau\in[0,\infty)}$ is relatively compact in $L^1(\R^2)$.
Further,
\begin{equation}\label{eqnWbound}
  \abs{W(\xi, \tau)} \leq
    C \int_{\R^2}
	\exp \paren[\Big]{ \frac{-\abs{\xi - \eta e^{-\tau/2}}^2 }{C} }
	\abs{W_0(\eta)}
	\, d\eta.
\end{equation}
for some constant $C$ which depends only on $\norm{W_0}_{L^1}$ and $\norm{(1 + \abs{\xi})D_0(\xi)}_{L^1}$.
\end{lemma}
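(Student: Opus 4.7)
The plan is to treat the two assertions separately. The Gaussian pointwise bound~\eqref{eqnWbound} is the technical core and will be established in section~\ref{sxnInequalities}; I sketch it first. Once that is in hand, the relative compactness in $L^1$ follows from a Kolmogorov--Riesz argument deferred to section~\ref{sxnCompactness}, which I sketch second.

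For~\eqref{eqnWbound}, the starting point is the explicit Mehler representation
\[
  (e^{\tau\cL} f)(\xi) = \int_{\R^2} \frac{1}{4\pi(1-e^{-\tau})} \exp\paren[\Big]{-\frac{\abs{\xi - e^{-\tau/2}\eta}^2}{4(1-e^{-\tau})}} f(\eta)\, d\eta,
\]
whose translation $\eta\mapsto e^{-\tau/2}\eta$ matches exactly the shift appearing in~\eqref{eqnWbound}. Unrolling the change of variables~\eqref{eqnVarChange}, equation~\eqref{eqnW} becomes the standard advection-diffusion equation $\del_t\omega + \dv(u\omega) = \Delta\omega$ in the original coordinates, and~\eqref{eqnWbound} is then equivalent to an Aronson-type Gaussian upper bound for the fundamental solution of $\del_t - \Delta + u\cdot\nabla + d$. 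Since~\eqref{eqnD} is purely linear, the potential velocity $\gradinv D$ enjoys explicit Gaussian heat-kernel decay---this is where the hypothesis $\abs{\xi}D_0\in L^1$ enters---while the rotational part $\BS W$ is controlled by the standard Biot-Savart estimates. Together these supply the scaling-critical control on $U$ needed to run the heat-kernel argument, which can also be carried out directly in self-similar coordinates via Duhamel iteration against the kernel displayed above.

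For the $L^1$ compactness, I would verify the three conditions of Kolmogorov--Riesz. Uniform $L^1$-boundedness of $W$ follows from a Gronwall estimate: testing~\eqref{eqnW} against $\sign(W)$ kills the transport contribution of $U$ and leaves only $\int D\abs{W}\,d\xi$, which is controlled by $\|D(\tau)\|_{L^\infty}$. Tightness at infinity is immediate from~\eqref{eqnWbound}, since convolving the Mehler kernel with $W_0\in L^1$ gives $\tau$-uniform smallness of the mass outside large balls; the analogous statement for $D$ follows directly from~\eqref{eqnD}. Equicontinuity of spatial translates is supplied by parabolic smoothing on $[\tau_0,\infty)$ for any fixed $\tau_0>0$, combined with an approximation-by-smooth-data argument on the compact interval $[0,\tau_0]$.

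The main obstacle is the loss of divergence-freeness of $U$, encoded in $\dv(UW) = U\cdot\nabla W + DW$. Every a priori estimate for $W$ picks up an extra $DW$ term, and the Gronwall step requires $\|D(\tau)\|_{L^\infty}$ to be integrable on $[0,\infty)$: the mean-zero hypothesis $\beta = 0$ combined with $\abs{\xi}D_0\in L^1$ is precisely what yields the decay $\|D(\tau)\|_{L^\infty}\lesssim e^{-\tau/2}$ that secures integrability for large $\tau$, while the standard heat-kernel bound $\|D(\tau)\|_{L^\infty}\lesssim\|D_0\|_{L^1}$ handles short times. Threading this delicate extra term through both the pointwise bound and the compactness estimates is the real substance of the proof.
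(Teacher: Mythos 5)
Your proposal follows essentially the same route as the paper: the pointwise bound~\eqref{eqnWbound} is proved in section~\ref{sxnInequalities} as an Aronson-type Gaussian upper bound for the fundamental solution of $\del_t-\Delta+u\cdot\nabla\,\cdot+d$ (via the Carlen--Loss log-Sobolev and exponential-weight argument, after splitting the drift into a divergence-free Biot--Savart part decaying like $t^{-1/2}$ and a potential part whose divergence is time-integrable precisely because $\beta=0$ and $\abs{x}d_0\in L^1$), and the $L^1$ relative compactness is then deduced, exactly as in Gallay--Wayne, from the tightness and uniform integrability that this bound supplies together with parabolic smoothing. The one caveat is your parenthetical claim that the Gaussian bound could ``also be carried out via Duhamel iteration'': since $\norm{u}_{L^\infty}\sim t^{-1/2}$ is scaling-critical, a naive Duhamel expansion does not close, and the divergence structure exploited in the Carlen--Loss energy/log-Sobolev step is genuinely needed there.
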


The second step in the proof of proposition~\ref{ppnMeanZero} is to characterize complete trajectories of the system~\sysWD.
To do this we need to introduce a weighted $L^p$ space.
For any $m \geq 0$, $p \in [1, \infty)$ we define the space $L^p(m)$ by
\begin{equation*}
	L^p(m) = \set[\Big]{
	  f \in L^p: \|f\|_{L^p(m)} < \infty,
	  \text{ where }
	  \|f\|_{L^p(m)}^p = \int (1 + |\xi|^2)^\frac{pm}{2} |f(\xi)|^p d\xi
	}.
\end{equation*}
It turns out that the only complete trajectories of the system~\sysWD that are bounded in $L^2(m)$ are scalar multiples of the Gaussian.
This is our next lemma.

\begin{lemma}\label{lmaMeanZeroCTraj}
Let $m > 3$ and suppose that $\{(W(\tau),D(\tau))\}$ is a complete trajectory of the system~\sysWD which is bounded in $L^2(m)$.
Then, if $\int W_0 = \alpha$ and $\int D_0 = 0$ we must have $W(\tau) = \alpha G$ and $D = 0$ for all $\tau$.
\end{lemma}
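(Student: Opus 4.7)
My plan is to decouple the problem: first exploit the linearity of the $D$ equation to force $D \equiv 0$, and then reduce \sysWD to the standard $2$D Navier-Stokes vorticity equation in self-similar variables, for which the classification of complete $L^2(m)$-bounded trajectories is precisely the main theorem of \cite{bblGallayWayne05}.

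The first ingredient is the spectral picture of $\cL$ on $L^2(m)$ for $m > 3$, available from \cite{bblGallayWayne02}: the number $0$ is a simple isolated eigenvalue of $\cL$ with eigenspace $\operatorname{span}\{G\}$, while the rest of the spectrum lies in $\{\operatorname{Re}(z) \leq -\delta\}$ for some $\delta > 0$. A direct computation shows that the formal adjoint $\cL^\ast = \Delta - \tfrac{1}{2}\xi\cdot\nabla$ annihilates the constant function $1$, so the associated spectral projection is $\pi_0 f = \paren[\big]{\int f\, d\xi}\, G$, and the complementary semigroup enjoys the bound $\|e^{\tau\cL}(I - \pi_0)\|_{L^2(m)\to L^2(m)} \leq C e^{-\delta\tau}$ for all $\tau \geq 0$.

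Next, since $D$ satisfies the linear equation $\partial_\tau D = \cL D$, we have $D(\tau) = e^{(\tau-s)\cL} D(s)$ whenever $s \leq \tau$. The hypothesis $\int D(\tau) = \beta = 0$ for every $\tau$ means precisely that the zero-eigenvalue component vanishes identically, $\pi_0 D \equiv 0$. Consequently
\[
\|D(\tau)\|_{L^2(m)} = \|e^{(\tau-s)\cL}(I - \pi_0) D(s)\|_{L^2(m)} \leq Ce^{-\delta(\tau - s)}\|D(s)\|_{L^2(m)},
\]
and letting $s \to -\infty$ while invoking the uniform $L^2(m)$-bound along the complete trajectory forces $D(\tau) \equiv 0$.

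With $D \equiv 0$ the velocity simplifies to $U = \BS W$, and \eqref{eqnW} becomes exactly the self-similar $2$D Navier-Stokes vorticity equation. The characterization of its complete $L^2(m)$-bounded trajectories with $\int W = \alpha$ is the content of \cite{bblGallayWayne05}: a monotonic relative-entropy Lyapunov functional (applied after a sign decomposition if $W$ changes sign), combined with LaSalle's invariance principle and the compactness afforded by Lemma~\ref{lmaCompactness}, forces the only such trajectory to be the constant $\alpha G$. Invoking this completes the argument. The genuinely new ingredient is the spectral decoupling above, which works precisely because $\beta = 0$ kills the only spectral mode of $\cL$ that is not exponentially damped; the two main technical obstacles (the spectral gap for $\cL$ on $L^2(m)$ and the nonlinear Lyapunov argument for the reduced equation) are both inherited from the Gallay-Wayne program.
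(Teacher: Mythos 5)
Your argument is correct, and it reaches the conclusion by a route that differs from the paper's in one genuine respect: the treatment of $D$. The paper does not decouple $D$ first; instead it runs a single Lyapunov functional $\Phi(W,D)=\norm{W}_{L^1}+\norm{D}_{L^1}$ on the pair (lemma~\ref{lmaSign}), uses the strong maximum principle and LaSalle to conclude that both $W$ and $D$ have constant sign along any complete bounded trajectory, and only then kills $D$ by combining constant sign with $\int D=0$. You exploit instead that~\eqref{eqnD} is linear and autonomous, so $D(\tau)=S(\tau-s)D(s)$ with $D(s)$ of mean zero, and the decay $\norm{S(\tau)}_{L^2_0(m)\to L^2_0(m)}\leq Ce^{-\mu\tau}$ (part~2 of lemma~\ref{SemiGroupBds}, valid here since $m>3$) forces $D\equiv 0$ upon letting $s\to-\infty$; your identification of $\pi_0 f=(\int f)\,G$ via $\cL^* 1=0$ is correct. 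This is cleaner and more elementary for $D$, at the cost of invoking the spectral structure of $\cL$ on $L^2(m)$ (which the paper needs elsewhere anyway). Once $D\equiv 0$, both you and the paper are doing the same thing for $W$: the paper reproduces the Gallay--Wayne sign-plus-relative-entropy argument (lemmas~\ref{lmaSign} and~\ref{lmaEntropy} together with LaSalle), while you cite \cite{bblGallayWayne05} for it wholesale. Two small points: the relative compactness you need for the LaSalle step on a \emph{complete} trajectory in $L^2(m)$ is lemma~\ref{lmaWCompact}, not lemma~\ref{lmaCompactness} (which concerns forward trajectories in $L^1$); and note that $(I-\pi_0)$ removes only the eigenvalue $0$, so the remaining isolated eigenvalue $-1/2$ dictates that your $\delta$ can be taken at most $1/2$ --- which is all you need.
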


Momentarily postponing the proofs of lemmas~\ref{lmaCompactness} and~\ref{lmaMeanZeroCTraj}, we prove proposition~\ref{ppnMeanZero}.

\begin{proof}[Proof of proposition~\ref{ppnMeanZero}]
%
Let $\Omega$ be the $\omega$-limit set of the trajectory $(W, D)$.
Since lemma~\ref{lmaCompactness} guarantees $\{W(\tau)\}$ and $\{D(\tau)\}$ are relatively compact in $L^1$, $\Omega$ must be non-empty, compact, and fully invariant under the evolution of the system~\sysWD.
Consequently, the trajectory of any $(\overline{W}, \overline{D}) \in \Omega$ must be complete.

Further, the upper bound~\eqref{eqnWbound} implies $\overline{W}$ is bounded above by a Gaussian.
To see this, choose a sequence of times $\tau_n \to \infty$ such that $(W(\tau_n)) \to \overline{W}$ in $L^1$ and almost everywhere.
Now dominated convergence and~\eqref{eqnWbound} imply
\begin{equation*}
  \begin{split}
    |\overline{W}(\xi)|
      &= \lim_{n\to\infty} |W(\xi, \tau_n)|
      \leq \lim_{n\to\infty}
	  C \int
	      \exp\paren[\Big]{ \frac{-|\xi-\eta e^{\frac{-\tau_n}{2}}|^2}{C} } |W_0(\eta)|
	      \, d\eta
    \\
    &\leq
      C \|W_0\|_{L^1} \exp\paren[\Big]{ \frac{-|\xi|^2}{C} }.
  \end{split}
\end{equation*}
Consequently $\Omega \subset L^2(m)^2$ for every $m$.

This implies that for any $(\overline{W}, \overline{D})$, the associated complete trajectory is bounded in $L^2(m)^2$ for every $m$.
Thus lemma~\ref{lmaMeanZeroCTraj} shows $\Omega \subset \{(\theta G,0): \theta \in \R\}$.
Since total mass is invariant under the flow (and $\Omega \neq \emptyset$), it follows that $\Omega = \{(\alpha G, 0)\}$, where $\alpha$ is defined in~\eqref{eqnMean}.
Since $\Omega$ contains exactly one element and $(W(\tau), D(\tau))$ is relatively compact in $L^1$, this immediately implies the first equality in~\eqref{eqnMeanZeroConv} for $p=1$.
Combined with the Gaussian upper bound implied by~\eqref{eqnWbound}, we obtain the first inequality in~\eqref{eqnMeanZeroConv} for any $p < \infty$.

The proof for $p = \infty$ uses bounds on the semigroup generated by the operator $\mathcal{L}$ and an integral representation for $W$.
Since we develop these bounds in section~\ref{sxnCompactness}, we prove $L^\infty$ convergence as lemma~\ref{lmaLinf} at the end of section~\ref{sxnCompactness}.


The second inequality in~\eqref{eqnMeanZeroConv} follows directly from the explicit solution formula for the heat equation.
Since this will also be used later, we extract it as a lemma.
\begin{lemma}\label{d_estimates}
Let
$D$ be a solution to~\eqref{eqnD} with initial data $D_0$.
Suppose
\begin{equation*}
  \int_{\R^2} (1 + \abs{\xi} ) \abs{D_0(\xi)} \, d\xi < \infty
  \quad\text{and}\quad
  \int_{\R^2} D_0 \, d\xi = 0.
\end{equation*}
There there exists a universal constant $C>0$ such that
\begin{equation}\label{eqnDfastDecay}
  \norm{D(\tau)}_{L^p} \leq C e^{-\tau/2} \int_{\R^2} (1 + \abs{\xi}) \abs{D_0(\xi)} \, d\xi 
\end{equation}
for all $p \in [1,\infty]$.
\end{lemma}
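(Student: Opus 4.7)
The plan is to reduce the statement to a direct computation with the two-dimensional heat kernel. Undoing the self-similar change of variables~\eqref{eqnVarChange} shows that $D(\xi,\tau) = t\,d(\sqrt{t}\,\xi,t)$ where $d$ solves the standard heat equation $\partial_t d = \Delta d$ on $\R^2$ with data $D_0$ imposed at $t=1$. Transcribing the heat-kernel representation for $d$ back into self-similar variables yields the explicit formula
\[
  D(\xi,\tau)
  = \int_{\R^2} K_{a(\tau)}\paren[\big]{\xi - e^{-\tau/2}\eta}\,D_0(\eta)\,d\eta,
\]
where $K_a(\xi) \defeq (4\pi a)^{-1}\exp\paren{-|\xi|^2/(4a)}$ and $a(\tau) \defeq 1 - e^{-\tau}$.

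The key step is to insert the mean-zero hypothesis $\int D_0 = 0$. Since $K_{a(\tau)}(\xi)\int D_0(\eta)\,d\eta = 0$, the formula can be rewritten as
\[
  D(\xi,\tau)
  = \int_{\R^2}\brak[\big]{K_{a(\tau)}\paren{\xi - e^{-\tau/2}\eta} - K_{a(\tau)}(\xi)}\,D_0(\eta)\,d\eta.
\]
Expressing the bracket as the line integral $-\int_0^1 e^{-\tau/2}\eta\cdot\nabla K_{a(\tau)}\paren{\xi - s e^{-\tau/2}\eta}\,ds$ and invoking Minkowski's integral inequality together with the translation invariance of $\norm{\cdot}_{L^p}$, one obtains
\[
  \norm{D(\tau)}_{L^p}
  \leq e^{-\tau/2}\norm{\nabla K_{a(\tau)}}_{L^p}\int_{\R^2}|\eta|\,|D_0(\eta)|\,d\eta.
\]

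A routine scaling calculation gives $\norm{\nabla K_a}_{L^p} = C_p\,a^{1/p - 3/2}$. For $\tau \geq 1$ we have $a(\tau) \in [1-e^{-1},1]$, so the prefactor is uniformly bounded by a $p$-dependent constant, which proves the claim in this regime. For $\tau \in [0,1]$ the factor $e^{-\tau/2}$ is bounded below by $e^{-1/2}$; here the trivial $L^1$ contraction $\norm{D(\tau)}_{L^1} \leq \norm{D_0}_{L^1}$ handles $p = 1$, and the higher-$L^p$ bounds follow from the standard smoothing estimates for $e^{\tau\cL}$ on any fixed short time interval. The only mildly delicate point—and not a real obstacle—is that the constant in the conclusion genuinely depends on $p$, since $\norm{\nabla K_a}_{L^p}$ blows up as $a \to 0$ whenever $p > 1$; strict uniformity in $p$ thus fails unless $\tau$ is additionally bounded below.
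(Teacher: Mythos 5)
Your argument is essentially the paper's: both write $D$ via the heat kernel, use $\int D_0\,d\xi=0$ to subtract off $K_{a(\tau)}(\xi)\int D_0\,d\eta$, and extract the extra factor $e^{-\tau/2}$ from a first-order Taylor expansion of the kernel combined with Minkowski's integral inequality; the paper merely performs the computation after reverting to the $x$--$t$ variables, where the kernel is $\bar G(x,t)=t^{-1}G(x/\sqrt{t})$ and no factor $a(\tau)$ appears. The only point of divergence is your treatment of the short-time regime $\tau\in[0,1]$, and there the patch you propose does not work: for $p>1$ the smoothing estimate for $e^{\tau\cL}$ costs a factor $a(\tau)^{-(1-1/p)}$ that blows up as $\tau\to 0^{+}$, and in fact no bound of the form $\norm{D(\tau)}_{L^p}\leq C\int(1+\abs{\xi})\abs{D_0}\,d\xi$ with $C$ uniform down to $\tau=0$ can hold for $p>1$ (take $D_0$ to be the difference of two narrow normalized bumps, so that $\norm{D_0}_{L^1(1)}$ stays bounded while $\norm{D(\tau)}_{L^\infty}$ is of order $a(\tau)^{-1}$). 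This is a defect of the lemma's phrasing rather than of your idea: the paper's proof tacitly establishes the $x$--$t$ statement in~\eqref{eqnOmegaDDecay} with data prescribed at $t=0$ (i.e.\ at $\tau=-\infty$), for which the kernel is already smooth at every $t>0$; if the data are genuinely prescribed at $\tau=0$ one must either restrict to $\tau\geq\tau_*>0$ (which is where your argument is complete and correct) or assume in addition that $D_0\in L^p$.
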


We remark that the decay rate of $D$ to $0$ being faster than that of the rescaled heat kernel is because the initial data has mean-zero.
This concludes the proof of proposition~\ref{ppnMeanZero}.
\end{proof}

It remains to prove lemmas~\ref{lmaCompactness}--\ref{d_estimates}.
The proof of lemma~\ref{d_estimates} is short, and we present it here.

\begin{proof}[Proof of lemma~\ref{d_estimates}]
Since the heat kernel in $x$-$t$ coordinates is common knowledge, we return to the $x$-$t$ coordinates and prove $d$ satisfies the second inequality in~\eqref{eqnOmegaDDecay}.
Let $\bar G(x, t) = G(x / \sqrt{t}) / t$ be the heat-kernel.
Observe
\begin{align*}
  \norm{d(t)}_{L^p}
    &= \norm{d_0 * \bar G(t)}_{L^p}
    = \norm[\Big]{
	  \int_{\R^2} d_0(y) \bar G(x-y, t) \, dy
	}_{L^p(x)}
  \\
    &= \norm[\Big]{
	  \int_{\R^2} d_0(y) \paren[\big]{ \bar G(x-y, t) - \bar G(x, t) } \, dy
	}_{L^p(x)}
    \\
    &\leq 
      \frac{1}{t^{1 - 1/p}}
      \int_{\R^2} \abs{d_0(y)}
	\norm[\big]{ G( x - t^{-1/2} y ) - G(x) }_{L^p(x)} \, dy
    \\
    & \leq
      \frac{C}{t^{\frac{3}{2} - \frac{1}{p}}} \int_{\R^2} \abs{y d_0(y)} \, dy,
\end{align*}
which implies the second inequality in~\eqref{eqnOmegaDDecay} and concludes the proof.
\end{proof}

The proof of lemma~\ref{lmaCompactness} is technical; we postpone the proof of~\eqref{eqnWbound} to section~\ref{sxnInequalities} and the proof of compactness to section~\ref{sxnCompactness}. We prove lemma~\ref{lmaMeanZeroCTraj} in section~\ref{sxnLyapunov}.

\subsection{Characterization of complete trajectories.}\label{sxnLyapunov}


The characterization of complete trajectories to the system~\sysWD when $\beta = 0$ is identical to the characterization of complete trajectories of the 2D Navier-Stokes equations presented in~\cite{bblGallayWayne05}.
Since the proof is short and elegant, we reproduce it here for the reader's convenience.

There are two steps to this proof:
  First, show that in a complete trajectory both $W$ and $D$ must have constant sign.
  Of course, since $D$ is mean-zero, this forces $D = 0$ identically, and reduces to the situation already considered by Gallay and Wayne~\cite{bblGallayWayne05}.
  Second, the most interesting step, is to use the Boltzmann entropy functional to show that $W$ must be a scalar multiple of a Gaussian.
  This is exactly what fails in the case where $D$ is not mean zero.

We state each of these steps as lemmas, below:

\begin{lemma}\label{lmaSign}
  Suppose $m>3$ and $(W,D)\in C^0(\R, L^2(m)^2)$ is a solution of the system~\sysWD which is bounded in $L^2(m)$.
  Then both $W$ and $D$ must have constant sign.
\end{lemma}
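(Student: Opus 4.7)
The plan is to show, for each of $W$ and $D$, that the $L^1$ norm is non-increasing along the flow and strictly decreases whenever the corresponding function changes sign; combined with mass conservation and the complete trajectory hypothesis, this forces constant sign throughout. This is an adaptation of the classical Gallay--Wayne argument from \cite{bblGallayWayne05} to the extended system, with the key observation being that the divergence terms cancel in the $L^1$ computation.

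For the linear equation $\partial_\tau D = \cL D$, the semigroup $e^{\tau \cL}$ is given by an explicit strictly positive integral kernel (the rescaled heat kernel), and splitting $D = D_+ - D_-$ gives $\|e^{\tau \cL} D\|_{L^1} < \|D\|_{L^1}$ strictly whenever $D$ changes sign. For the $W$ equation, I would approximate $|\cdot|$ by smooth convex functions $\eta_\epsilon$ with $\eta_\epsilon'' \geq 0$ supported in $(-\epsilon, \epsilon)$, test \eqref{eqnW} against $\eta_\epsilon'(W)$, and integrate by parts. Using $\nabla \cdot \xi = 2$ on the drift term $\tfrac{1}{2}\xi \cdot \nabla W$ and $\nabla \cdot U = D$ on the nonlinear flux $\nabla \cdot (UW)$, the non-diffusive contributions combine into
\begin{equation*}
  \int \bigl(\eta_\epsilon'(W) W - \eta_\epsilon(W)\bigr)(1 - D) \, d\xi,
\end{equation*}
which vanishes as $\epsilon \to 0$. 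Hence $\tfrac{d}{d\tau}\|W\|_{L^1} = -\lim_{\epsilon \to 0} \int \eta_\epsilon''(W)|\nabla W|^2 \, d\xi \leq 0$, and a Kato-type argument, using the parabolic regularity of $W$, shows the dissipation is strictly positive whenever $W$ changes sign.

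Next, mass conservation gives $\int W \, d\xi = \alpha$ and $\int D \, d\xi = \beta$ for all $\tau$, so $\|W(\tau)\|_{L^1} \geq |\alpha|$ and $\|D(\tau)\|_{L^1} \geq |\beta|$, with equality if and only if the function has constant sign. Assume now that $(W, D)$ is a complete trajectory bounded in $L^2(m) \hookrightarrow L^1$. By the compactness argument of lemma~\ref{lmaCompactness}, the trajectory is relatively compact in $L^1$. Since $\|W(\tau)\|_{L^1}$ is non-increasing and bounded, $M_- := \lim_{\tau \to -\infty} \|W(\tau)\|_{L^1}$ exists and is finite. Choosing $\tau_n \to -\infty$ with $W(\tau_n) \to W_{-\infty}$ in $L^1$, continuous dependence gives $W(\tau_n + s) \to \widetilde W(s)$ in $L^1$ for each $s \geq 0$, where $\widetilde W$ is the forward orbit of $W_{-\infty}$. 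Because $\|\widetilde W(s)\|_{L^1} = M_-$ for all $s$, the strict $L^1$ contraction forces $\widetilde W$ to have constant sign, and hence $M_- = |\alpha|$. Monotonicity then gives $\|W(\tau)\|_{L^1} \leq |\alpha|$, while mass conservation gives $\|W(\tau)\|_{L^1} \geq |\alpha|$, so equality holds for all $\tau$ and $W$ has constant sign throughout. The identical argument applies to $D$.

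The main obstacle is making the strict $L^1$ contraction for the nonlinear $W$ equation rigorous: one must pass to the limit $\epsilon \to 0$ in $\int \eta_\epsilon''(W)|\nabla W|^2 \, d\xi$ and extract a strictly positive boundary-like contribution from the zero set of $W$. This requires either a careful Kato inequality combined with the parabolic regularity afforded by \eqref{eqnWbound}, or alternatively working directly with $W_+$ and $W_-$ via a Feynman--Kac representation for the transport-diffusion equation $\partial_\tau W - \cL W + U \cdot \nabla W = -D W$.
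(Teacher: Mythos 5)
Your overall architecture is the same as the paper's: use the $L^1$ norm as a Lyapunov functional that is non-increasing and strictly decreasing exactly when a component changes sign, then combine mass conservation with relative compactness of the complete trajectory and an invariance argument at the $\alpha$-limit to force $\norm{W(\tau)}_{L^1}\equiv\abs{\alpha}$ and $\norm{D(\tau)}_{L^1}\equiv\abs{\beta}$. Your endgame (constancy of $\norm{\widetilde W(s)}_{L^1}$ along the limiting orbit forces constant sign there, hence $M_-=\abs{\alpha}$, hence equality everywhere) is a clean variant of the LaSalle step in the paper, which compares the $\alpha$- and $\omega$-limits instead. One small citation point: for a complete trajectory bounded in $L^2(m)$ the relevant compactness statement is lemma~\ref{lmaWCompact} (relative compactness in $L^2(m)$, which embeds into $L^1$ for $m>1$), not lemma~\ref{lmaCompactness}, which concerns forward trajectories with $L^1$ data.

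The substantive issue is the one you flag yourself: in your primary route, strict decrease of $\norm{W}_{L^1}$ upon sign change must be extracted from $\lim_{\eps\to0}\int\eta_\eps''(W)\abs{\nabla W}^2\,d\xi$, which is a codimension-one quantity concentrated on the zero level set (formally $2\int_{\{W=0\}}\abs{\nabla W}\,d\mathcal{H}^1$). There is no easy lower bound here: a priori $\nabla W$ could vanish on $\{W=0\}$ even when $W$ changes sign, so strict positivity does not follow from the Kato computation alone, and making it rigorous would require Hopf-lemma-type input that is essentially equivalent to what the paper uses anyway. (The monotonicity half also needs a little care with integrability of $\eta_\eps'(W)W-\eta_\eps(W)$ over all of $\R^2$, but that works out.) The paper resolves the strictness by your own suggested alternative: define $W^\pm$ as the solutions of the \emph{same} linear conservative equation $\partial_\tau W^\pm + \dv(UW^\pm)=\cL W^\pm$, with $U$ the velocity of the full solution held fixed, and with initial data $W_0^\pm=\max\set{\pm W_0,0}$. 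Linearity gives $W=W^+-W^-$ for all time, the conservative form gives $\int W^\pm=\int W_0^\pm$ (since $\int\cL f=0$), and the strong maximum principle forces the supports of $W^+(\tau)$ and $W^-(\tau)$ to overlap for every $\tau>0$ whenever both are nontrivial, whence
\begin{equation*}
  \int_{\R^2}\abs{W(\tau)}\,d\xi < \int_{\R^2}\paren[\big]{W^+(\tau)+W^-(\tau)}\,d\xi = \int_{\R^2}\abs{W_0}\,d\xi.
\end{equation*}
Substituting this decomposition for your $\eta_\eps$ computation closes the gap; the rest of your argument then goes through.
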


\begin{lemma}\label{lmaEntropy}
  Let $(W, D)$ be a solution to the system~\sysWD with $W_0 \in L^2(m)$, $D_0 = 0$, $W_0 \geq 0$.
  For the relative entropy $H$ given by 
  \begin{equation}\label{eqnHdef}
    H(W) = \int_{\R^2} W \ln \paren[\Big]{\frac{W}{G}} \, d\xi,
  \end{equation}
  we have
  \begin{equation}\label{eqnHdecay}
    \partial_\tau H
      = -\int_{\R^2}
	    W
	    \abs[\Big]{
	      \nabla \ln \paren[\Big]{ \frac{W}{G} }
	    }^2
	    \, d\xi.
  \end{equation}
\end{lemma}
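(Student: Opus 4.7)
The plan is to express $\partial_\tau H$ in a form that immediately reveals the dissipation on the right-hand side of~\eqref{eqnHdecay}, with the transport contribution vanishing by the classical antisymmetry of the Biot--Savart kernel. Since $D_0 = 0$ and $D$ solves the pure heat equation~\eqref{eqnD}, we have $D(\tau) \equiv 0$ for all $\tau$. Consequently~\eqref{eqnU} reduces to $U = K_{BS} * W$, and in particular $\dv U = 0$. The maximum principle applied to the drift-diffusion equation~\eqref{eqnW} then propagates $W_0 \geq 0$ to $W(\tau) \geq 0$, so $\ln(W/G)$ makes sense wherever $W > 0$.

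The first key step is the Fokker--Planck rewriting
\[
  \cL W = \nabla \cdot \paren[\big]{ W \nabla \ln(W/G) },
\]
which I would verify by expanding $W \nabla \ln(W/G) = \nabla W + \tfrac12 \xi W$ and comparing with $\cL W = \lap W + \tfrac12 \nabla\cdot (\xi W)$. Using this identity, together with $\int \partial_\tau W \, d\xi = 0$ and integration by parts,
\begin{align*}
  \partial_\tau H
  &= \int (\partial_\tau W)\, \ln(W/G) \, d\xi \\
  &= -\int W \abs[\big]{\nabla \ln(W/G)}^2 d\xi
     + \int UW \cdot \nabla \ln(W/G) \, d\xi,
\end{align*}
where the first term comes from $\int \cL W \cdot \ln(W/G)\, d\xi$ after integration by parts, and the second from $-\int \nabla \cdot(UW)\, \ln(W/G)\, d\xi$.

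What remains is to show the transport integral vanishes. Expanding $W \nabla \ln(W/G) = \nabla W + \tfrac12 \xi W$ and using $\dv U = 0$ reduces this to showing $\int (U \cdot \xi) W \, d\xi = 0$. Inserting the Biot--Savart representation and using the pointwise identity $(\xi - \eta)^\perp \cdot \xi = \eta \cdot \xi^\perp$, this integral equals
\[
  \frac{1}{2\pi} \iint \frac{\eta \cdot \xi^\perp}{\abs{\xi-\eta}^2}\, W(\xi) W(\eta) \, d\xi\, d\eta,
\]
which is antisymmetric under swapping $\xi \leftrightarrow \eta$ (since $\xi \cdot \eta^\perp = -\eta \cdot \xi^\perp$) and therefore equals zero.

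The main obstacle is really one of justification rather than of algebra: one must verify that the integration by parts and the application of Fubini above are legitimate. For this I would combine the Gaussian upper bound~\eqref{eqnWbound} with standard parabolic regularity for~\eqref{eqnW} to produce enough spatial decay of $W$ and $\nabla W$ to kill all boundary terms and make the above double integral absolutely convergent. I would emphasize that the vanishing of $\int (U \cdot \xi) W \, d\xi$ is the single structural cancellation that makes this entropy identity clean; it is precisely this cancellation that breaks down once $\dv U = D \not\equiv 0$, which explains why the Lyapunov approach does not extend directly to the $\beta \neq 0$ regime.
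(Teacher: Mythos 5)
Your proposal is correct and follows essentially the same route as the paper: differentiate $H$, use the Fokker--Planck form $\cL W = \nabla\cdot(W\nabla\ln(W/G))$ to produce the dissipation term, and kill the transport contribution by reducing it to $\int (U\cdot\xi)W\,d\xi = 0$ via the antisymmetry of the Biot--Savart kernel (the paper simply cites Gallay--Wayne, lemma~3.2, for the symmetrization you write out explicitly). The only cosmetic difference is that you discard the ``$+1$'' from $\partial_\tau\bigl(W\ln(W/G)\bigr)$ at the outset using $\int\partial_\tau W\,d\xi=0$, while the paper carries it along and lets it integrate to zero.
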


Lemma~\ref{lmaMeanZeroCTraj} immediately follows from lemmas~\ref{lmaSign}--\ref{lmaEntropy}, and we spell it out here for completeness.

\begin{proof}[Proof of lemma~\ref{lmaMeanZeroCTraj}]
  By lemma~\ref{lmaSign}, we know that both $W$ and $D$ have constant sign.
  Since $\int D = 0$, this forces $D = 0$ identically.
  Further, by symmetry we can assume $W \geq 0$.

  Note that by the comparison principle the set $L^2(m) \cap \set{ \tilde{W} \geq 0}$ is invariant under the dynamics of the system~\sysWD.
  Restricting our attention to this set, we observe that the entropy $H$ is strictly decreasing except on the set of equilibria $\tilde{W} = \theta G$.
  By LaSalle's invariance principle this implies that $W = \theta G$ for some $\theta$.
  Since $\int W = \alpha$ this forces $\theta = \alpha$ concluding the proof.
\end{proof}

It remains to prove lemmas~\ref{lmaSign} and~\ref{lmaEntropy}, which we do in sections~\ref{sxnSign} and~\ref{sxnEntropy} respectively.

\subsubsection{The sign of complete trajectories.}\label{sxnSign}

The main idea behind the proof of lemma~\ref{lmaSign} is that the $L^1$ norm can be used as a Lyapunov functional.
However, we first need a relative compactness lemma to guarantee that the $\alpha$ and $\omega$-limit sets are non-empty, and we state this next.

\begin{lemma}\label{lmaWCompact}
  Let $m>3$ and suppose $(W,D)\in C^0(\R, L^2(m)^2)$ is a solution to the system~\sysWD which is bounded in $L^2(m)$.
   The trajectory $\{( W(\tau), D(\tau) )\}_{\tau\in \R}$ is relatively compact in $L^2(m)$.
\end{lemma}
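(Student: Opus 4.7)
The plan is to bootstrap the hypothesized uniform $L^2(m)$ bound on the complete trajectory to a uniform bound in a space that embeds compactly into $L^2(m)$, using parabolic smoothing for the rescaled heat (Fokker--Planck) semigroup $e^{\tau \cL}$.

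First, I would write Duhamel's formula on a unit time window:
\begin{equation*}
  D(\tau) = e^{\cL} D(\tau - 1),
  \qquad
  W(\tau) = e^{\cL} W(\tau - 1)
    - \int_0^1 e^{(1-\sigma)\cL}\, \nabla\cdot (UW)(\tau - 1 + \sigma)\, d\sigma,
\end{equation*}
which is valid for every $\tau \in \R$ since the trajectory is defined on all of $\R$. Standard weighted estimates on $e^{s\cL}$ (see~\cite{bblGallayWayne02}) give $\|e^{s\cL}\|_{L^2(m)\to H^1(m)} \lesssim s^{-1/2}$ and hence $\|e^{s\cL}\nabla\cdot\|_{L^2(m)\to L^2(m)} \lesssim s^{-1/2}$, which is integrable near $s = 0$. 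For $m > 3$, the Biot--Savart kernel $K_{BS}$ together with its analog $\gradinv$ and H\"older interpolation give $\|U\|_{L^\infty} \lesssim \|W\|_{L^2(m)} + \|D\|_{L^2(m)}$, so $UW$ is uniformly bounded in $L^2(m)$. Assembling these estimates upgrades the $L^2(m)$ trajectory bound to a uniform $H^1(m)$ bound on $(W(\tau), D(\tau))$.

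Second, I would extract relative compactness from this regularity gain. Rellich--Kondrachov provides compactness of $H^1_{\loc} \hookrightarrow L^2_{\loc}$. To obtain tightness in the $L^2(m)$-norm, I would establish a uniform bound in $L^2(m + \delta)$ for some small $\delta > 0$ by iterating the Duhamel argument at the larger weight: since $\|U\|_{L^\infty}$ is weight-independent, one has $\|UW\|_{L^2(m+\delta)} \lesssim \|U\|_{L^\infty}\|W\|_{L^2(m+\delta)}$, and a short bootstrap exploiting the spectral gap of $\cL$ in higher-weighted spaces upgrades the moment count. Combining local $L^2$ compactness with the $L^2(m+\delta)$ tail control then yields the desired relative compactness in $L^2(m)$.

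The main obstacle is precisely this moment-upgrade step: the semigroup $e^{s\cL}$ preserves but does not spontaneously gain polynomial moments in weighted $L^2$, so obtaining an extra $\delta$ of moments from the hypothesized bound requires exploiting the dissipative structure of $\cL$ in weighted spaces and carefully tracking moments through the nonlinear Duhamel iteration. By contrast, the regularity gain in the first step is routine since $e^{s\cL}$ acts as a rescaled heat convolution.
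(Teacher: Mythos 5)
Your first step (the unit-window Duhamel formula and the $L^2(m)\to H^1(m)$ smoothing of $e^{s\cL}$) is sound and matches what the paper does via lemma~\ref{WeightedBound} and Rellich's theorem for the local compactness. The genuine gap is exactly the step you flag as ``the main obstacle'': the moment upgrade from $L^2(m)$ to $L^2(m+\delta)$ does not go through as you have set it up. Two things break. First, your estimate $\norm{UW}_{L^2(m+\delta)} \lesssim \norm{U}_{L^\infty}\norm{W}_{L^2(m+\delta)}$ is circular --- the right-hand side is the quantity you are trying to bound, and no spectral-gap argument for $\cL$ in $L^2(m+\delta)$ can start until you already know the solution lives there. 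Second, and more fundamentally, the linear term $e^{\cL}W(\tau-1)$ in your unit-window formula carries no extra moments at all (the semigroup preserves but never gains polynomial weight, as you yourself note), so even a correct treatment of the nonlinear term would leave the linear contribution stuck at $m$ moments.

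The paper resolves both problems at once by exploiting the completeness of the trajectory. One writes $W = \alpha G + R$ with $R$ of mean zero, uses the exponential decay $\norm{S(\tau)}_{L^2_0(m)\to L^2_0(m)} \leq C e^{-\mu\tau}$ (lemma~\ref{SemiGroupBds}, part 2) to send the initial time $\tau_0 \to -\infty$ so that the linear term $S(\tau-\tau_0)R(\tau_0)$ vanishes entirely, and is left with $R$ expressed purely as Duhamel integrals over $(-\infty,\tau]$. The moment gain then comes not from the semigroup but from the \emph{structure of the integrands}: by the weighted Biot--Savart estimate $\norm{Kf}_{L^q(m-2/q)} \leq C_q \norm{f}_{L^2(m)}$ (proposition~\ref{kernel_bounds}, part 4) and H\"older, products such as $W\gradinv D$, $(\BS R)G$ and $(\BS R)R$ land in $L^q(m+r)$ for some $r>0$ and suitable $q<2$, after which $S(\tau-s)$ maps them back into $L^2(m+r)$ with an integrable singularity. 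If you want to salvage your outline, you must incorporate the mean-zero decomposition and the $\tau_0\to-\infty$ limit --- this is the only place where the hypothesis that the trajectory is \emph{complete} (defined for all $\tau\in\R$) enters, and without it the lemma is false as stated.
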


Lemma~\ref{lmaWCompact} is also used in the proof of lemma~\ref{lmaCompactness}, and we defer its proof to section~\ref{sxnCompactness}.
We prove lemma~\ref{lmaSign} next.

\begin{proof}[Proof of lemma~\ref{lmaSign}]
Define the Lyapunov function $\Phi$ by $\Phi(W, D) = \norm{W}_{L^1} + \norm{D}_{L^1}$.
We claim that $\Phi$ is always decreasing, and is strictly decreasing in time if and only if one of $W$ and $D$ does not have a constant sign.
To see this, define $W^+$ and $W^-$ to be the solutions to
\begin{equation*}
  \partial_\tau W^{+} + \dv (U W^+) = \mathcal{L} W^+
  \quad\text{and}\quad
  \partial_\tau W^{-} + \dv (U W^-) = \mathcal{L} W^-,
\end{equation*}
with initial data $W_0^+ = \max\set{W, 0}$ and $W_0^- = \max\set{-W,0}$ respectively.
We clarify that $U = \BS W$ here, and does not depend on $W^+$ or $W^-$.
Clearly $W^\pm \geq 0$ and $W = W^+ - W^-$ for all time.
Further, if both $W^+$ and $W^-$ are non-zero initially, the strong maximum principle implies that for any $\tau > 0$ the supports of $W^\pm(\tau)$ will necessarily intersect.
Consequently, for any $\tau > 0$,
\begin{multline}
  \int_{\R^2} \abs{W(\xi, \tau)} \, d\xi
    < \int_{\R^2} \paren{W^+(\xi, \tau) + W^-(\xi, \tau)} \, d\xi
    \\
    = \int_{\R^2} \paren{W_0^+(\xi) + W_0^-(\xi)} \, d\xi
    = \int_{\R^2} \abs{W_0} \, d\xi.\label{L1_decrease}
\end{multline}
A similar argument can be applied to $D$ and replacing $\tau = 0$ with any arbitrary time $\tau_0$ will show that $\Phi$ is strictly decreasing in time if and only if either $W$ or $D$ do not have a constant sign.

To see that complete trajectories have constant sign, we appeal to lemma~\ref{lmaWCompact} to guarantee that the trajectory $\set{(W(\tau), D(\tau)}_{\tau \in \R}$ has both an $\alpha$ and an $\omega$-limit.
Now choose two sequences of times $(\overline{\tau}_n) \to \infty$ and $(\underline{\tau}_n) \to -\infty$ such that
\begin{equation*}
  \underline{W} = \lim W(\underline{\tau}_n)
  \quad\text{and}\quad
  \overline{W} = \lim W( \overline{\tau}_n )
  \quad \text{in $L^2(m)$}.
\end{equation*}
Since $\int W$ is conserved we must have $\int \overline{W} = \int \underline{W}$.
Further, by LaSalle's invariance principle both $\overline{W}$ and $\underline{W}$ have constant sign.
Consequently, for any $\tau \in \R$,
\begin{equation*}
  \abs[\Big]{ \int_{\R^2} \underline{W} \, d\xi }
  = \int_{\R^2} \abs{ \underline{W} } \, d\xi
  \geq \int_{\R^2} \abs{ W(\tau) } \, d\xi
  \geq \int_{\R^2} |\overline{ W }| \, d\xi
  = \abs[\Big]{ \int_{\R^2} \overline{ W } \, d\xi }
  = \abs[\Big]{ \int_{\R^2} \underline{ W } \, d\xi }.
\end{equation*}
Hence, $\int |W(\tau)|d\xi$ is constant in $\tau$.  This, along with~\eqref{L1_decrease}, shows that $W$ has a constant sign.
A similar argument can be applied to $D$.
This shows that $\Phi$ is constant in time and hence both $W$ and $D$ must have constant sign.
\end{proof}

\subsubsection{Decay of the Boltzmann entropy.}\label{sxnEntropy}

The use of the relative entropy $H$ in this context was suggested
by C. Villani, and the decay (when $D = 0$) is a direct calculation that was carried out in~\cite{bblGallayWayne05}*{lemma 3.2}.
We briefly sketch a few details here for the readers convenience.

\begin{proof}[Proof of lemma~\ref{lmaEntropy}]
  Differentiating~\eqref{eqnHdef} with respect to $\tau$ gives
  \begin{equation*}
    \partial_\tau H
      = \int_{\R^2} \paren[\Big]{1 + \ln \paren[\Big]{\frac{W}{G}} } \partial_\tau W
      = \int_{\R^2} \paren[\Big]{1 + \ln \paren[\Big]{\frac{W}{G}} }
	\paren[\big]{\mathcal{L} W - \dv (U W)}.
  \end{equation*}
  Using the identity $\nabla G / G = -\xi / 2$ and the term involving $\mathcal{L}$ simplifies to
  \begin{multline*}
    \int_{\R^2}
      \paren[\Big]{1 + \ln\paren[\Big]{\frac{W}{G}} } \mathcal{L} W
      \, d\xi 
      = -\int_{\R^2}
	  \paren[\Big]{\nabla W + \frac{\xi}{2} W}
	  \cdot
	  \paren[\Big]{\frac{\nabla W}{W} - \frac{\nabla G}{G} }
	  \, d\xi 
	\\
      = -\int_{\R^2} W
	  \abs[\Big]{ \frac{\nabla W}{W} - \frac{\nabla G}{G} }^2
	  \, d\xi 
      = -\int_{\R^2} W
	    \abs[\Big]{
	      \nabla \ln \paren[\Big]{ \frac{W}{G} }
	    }^2
	    \, d\xi.
  \end{multline*}

  We claim the convection terms integrate to $0$.
  Indeed,
  \begin{equation*}
    - \int_{\R^2}
      \paren[\Big]{1 + \ln\paren[\Big]{\frac{W}{G}} } \dv (U W) \, d\xi 
      =
      \int_{\R^2} U \cdot \nabla W \, d\xi + \frac{1}{2} \int_{\R^2} W U \cdot \xi \, \, d\xi.
  \end{equation*}
  The first term on the right clearly integrates to $0$.
  If $U$ decayed sufficiently at infinity, we can write $W = \curl U$, integrate the second term by parts, and obtain
  \begin{equation}\label{eqnNonMagic}
    \frac{1}{2} \int_{\R^2} W U \cdot \xi \, \, d\xi
      = \frac{1}{4} \int_{\R^2} \xi \cdot \gradperp \abs{U}^2 \, d\xi 
      = 0.
  \end{equation}
  Without the decay assumption one can use the Biot-Savart law and Fubini's theorem (see for instance~\cite{bblGallayWayne05}*{lemma 3.2}) and still show this term integrates to $0$.
  This immediately yields~\eqref{eqnHdecay} as desired.
\end{proof}



\section{Stability when the initial divergence has non-zero mean}\label{sxnNonMeanZero}

In this section, we study the long time behaviour of the system~\sysOmegaD when $\beta \neq 0$ (i.e.\ when the mean of the initial divergence is non-zero) and prove theorem~\ref{thmBetaNonZero}.
Unlike the behaviour in section~\ref{sxnMeanZero}, the divergence $D$ of the equilibrium solution to the system~\sysWD is non-zero.
Consequently, the steady state of the system~\sysWD is no longer a Gaussian (like the Oseen vortex), but the radial function $W_s$ defined by~\eqref{eqnWs}.
We remark, however, that different, non-radial, steady solutions to the system~\sysWD may exist and we can neither prove nor disprove their existence.

Further it turns out that the radial state $W_s$ doesn't ``play nice'' with the non-linearity.
We are unable to show decay of the analogue of the Boltzmann entropy~\eqref{eqnHdef}, which is a key step in both~\cite{bblGallayWayne05} and the proof of theorem~\ref{thmBetaZero}.
We can, however, show that $W_s$ is stable under small perturbations globally in time (theorem~\ref{thmBetaNonZero}) using techniques that are similar to those in~\cites{bblRodrigues09,bblGallayMaekawa13}.
This is the main goal of this section.

In section~\ref{sxnWs}, we derive an explicit equation for the radial steady state~$W_s$.
In section~\ref{sxnEntropyFail}, we compute the evolution of the Boltzmann entropy functional mainly to point out the breaking point of the argument of Gallay and Wayne~\cite{bblGallayWayne05}.
In section~\ref{sxnRodrigues}, we use a different method (similar to that in~\cite{bblRodrigues09}) to prove stability under small perturbations (theorem~\ref{thmBetaNonZero}) modulo the proofs of a few estimates which are presented in section~\ref{sxnDivBounds}.

\subsection{The radial steady state}\label{sxnWs}

Since the equation for $D$ is linear, we find that $D \to \beta G$ as $\tau \to \infty$.  This can be seen, for instance, by noticing that $D - \beta G$ satisfies the heat equation in Euclidean coordinates with initial mean zero.  An argument analogous to the proof of lemma~\ref{d_estimates} gives the precise decay.
Turning to $W$, we denote the steady state by $W_s$.
For convenience, we normalize $W_s$ so that $\int W_s d\xi = 1$.
We claim that a unique radial steady state exists, and is exactly given by~\eqref{eqnWs}.
(We can not, however, rule out the possibility that other non-radial steady states exist.)

To see that the unique radial steady state satisfies~\eqref{eqnWs}, we use equation~\eqref{eqnW} to obtain
\[
	0 = -\left(\BS W_s\right) \cdot \nabla W_s - \beta\nabla\cdot\left((\gradinv G)  W_s\right)+ \cL W_s,
\]
in $L^2_w$.
Under the assumption that $W_s$ is radial, $\BS W_s \cdot \nabla W_s = 0$ and hence
\[
  \beta\nabla\cdot\left((\gradinv G)  W_s\right) = \cL W_s = \nabla \cdot \paren[\Big]{G \nabla\frac{W_s}{G} }.
\]
Consequently,
\[
	\gradperp \varphi = -\beta \gradinv G W_s + G \nabla \frac{W_s}{G}.
\]
for some function $\varphi$.
Since the right hand side is radially pointing and smooth, we must have $\gradperp \varphi = 0$ identically.

Switching to polar coordinates immediately shows that $W_s$ satisfies~\eqref{eqnWs}, and reverting back to the $x$ and $t$ coordinates shows that $(\tilde \omega_\beta, \tilde d_\beta)$, defined in~\eqref{eqnWsInXandT}, is the unique radially symmetric, self-similar solution to the system~\sysOmegaD.

\subsection{The Boltzmann entropy.}\label{sxnEntropyFail}

Before embarking on the proof of theorem~\ref{thmBetaNonZero}, we briefly study the analogue of the Boltzmann entropy in this situation.
Naturally, the Gaussian in this context needs to be replaced with $W_s$, the solution to~\eqref{eqnWs}, and so~\eqref{eqnHdef} now becomes
\begin{equation*}
  H(W) = \int_{\R^2} W \ln \paren[\Big]{\frac{W}{W_s}} \, d\xi.
\end{equation*}
Computing $\partial_\tau H$ and performing a calculation similar to that in section~\ref{sxnEntropy} we obtain
\begin{equation*}
  \begin{split}
  \partial_\tau H &=
    \int_{\R^2}
      W (\BS W) \cdot
      \paren[\Big]{
	\frac{\nabla W}{W} - \frac{\nabla W_s}{W_s}
      }
      \, d\xi
    -\int_{\R^2} W
	\abs[\Big]{ \frac{ \nabla W}{W} - \frac{\nabla W_s}{W_s} }^2
       \, d\xi
    \\
    &=
    - \int_{\R^2} W (\BS W) \cdot \frac{\nabla W_s}{W_s} \, d\xi
    -\int_{\R^2} W
	\abs[\Big]{ \frac{ \nabla W}{W} - \frac{\nabla W_s}{W_s} }^2
       \, d\xi.
  \end{split}
\end{equation*}
The second term is of course always negative.
The first term can be simplified using~\eqref{eqnWs} to
\begin{multline*}
  - \int_{\R^2} W (\BS W) \cdot \frac{\nabla W_s}{W_s} \, d\xi
  \\
  = \int_{\R^2} W (\BS W) \cdot \frac{\xi}{2} \, d\xi 
    + \beta \int_{\R^2} W (\BS W)
	\cdot \frac{\xi}{2 \pi \abs{\xi}^2 }
	\paren[\Big]{1 - 4 \pi G}
	\, d\xi.
\end{multline*}
The first term on the right integrates to $0$ (by equation~\eqref{eqnNonMagic}).
Further for any radial function (hence certainly for $W = W_s$) the second term vanishes.
Consequently,
\begin{multline*}
  \partial_\tau H =
    -\int_{\R^2} W
	\abs[\Big]{ \frac{ \nabla W}{W} - \frac{\nabla W_s}{W_s} }^2
       \, d\xi
  \\
  + \beta \int_{\R^2} (W - W_s) \BS (W-W_s)
	\cdot \frac{\xi}{2 \pi \abs{\xi}^2 }
	\paren[\Big]{1 - 4 \pi G}
	\, d\xi.
\end{multline*}
While the second term on the right should, in principle, be small (at least for small values of $\beta$ and when $W$ is close to $W_s$), we are (presently) unable to dominate this by the first term and show that $\partial_\tau H \leq 0$.
Thus we do not know whether the steady state $W_s$ is stable under large perturbations.

\subsection{Stability under small perturbations}\label{sxnRodrigues}

We now turn to proving stability of $(\tilde \omega_\beta, \tilde d_\beta )$ as stated in theorem~\ref{thmBetaNonZero}.

\begin{proof}[Proof of theorem~\ref{thmBetaNonZero}]
Using the $\xi$-$\tau$ coordinates, let $(W,D)$ be solutions to the system~\sysWD with initial data $W_0, D_0 \in L^2_w$.
Define the perturbations from the steady state $D_p$, $U_p$ and $W_p$ by
\begin{equation}\label{eqnWpDpDef}
W_p \defeq W - \alpha W_s,
\quad
D_p \defeq D - \beta G,
\quad\text{and}\quad
U_p \defeq
\BS W_p + \gradinv W_p.
\end{equation}
In this setting, theorem~\ref{thmBetaNonZero} will follow if we establish
\begin{align}
  \label{eqnWpBound}
    \|W_p(\tau)\|_w &\leq C \paren[\big]{
	\|W_p(\tau_0)\|_w e^{-\gamma\tau} + \norm{D_p(\tau_0)}_{L^1(1)}e^{-\tau/2}
      }
\end{align}
for some constant $C$, where $\tau_0 = \log(t_0)$.
As before, the estimate for $D$ in theorem~\ref{thmBetaNonZero} is analogous to lemma~\ref{d_estimates}.

To begin we state one basic result without proof.  First, a straightforward adaptation of the work in~\cite{bblRodrigues09}*{theorem~1} yields the following existence result.
\begin{lemma}\label{lmaExistence}
For $\epsilon_0>0$, there exists $\delta_0>0$, depending only on $\alpha$, such that if $W(0), D(0) \in L_w^2$ and
\[
	|\beta| + \|W_p(\tau_0)\|_w + \|D_p(\tau_0)\|_w
		\leq \delta_0,
\]
then there is a unique solution to the system~\sysWD such that, for all $\tau$, 
\begin{equation}\label{eqnWpDpSmall}
  \|D_p(\tau)\|_w + \|W_p(\tau)\|_w \leq \epsilon_0.
\end{equation}
\end{lemma}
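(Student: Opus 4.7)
The plan is to adapt the mild-solution fixed-point scheme of Rodrigues~\cite{bblRodrigues09} for the two-dimensional Navier--Stokes equation, carefully accounting for the extra terms involving $D_p$ and $\gradinv$. Since $(\alpha W_s, \beta G)$ is a steady solution to~\sysWD, substituting $W = \alpha W_s + W_p$ and $D = \beta G + D_p$ and subtracting the steady equations yields
\begin{align*}
  \partial_\tau W_p &= \cL_\alpha W_p - \beta \dv\bigl(\gradinv G \cdot W_p + W_s \gradinv D_p\bigr) - \dv(U_p W_p),\\
  \partial_\tau D_p &= \cL D_p,
\end{align*}
where $\cL_\alpha W_p \defeq \cL W_p - \alpha \dv\bigl((\BS W_s) W_p + W_s (\BS W_p)\bigr)$ is the Oseen linearization. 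Because the total masses $\alpha$ and $\beta$ are conserved, $\int W_p = 0 = \int D_p$, placing the perturbations in the mean-zero subspace of $L^2_w$.

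Next I would invoke the spectral theory for $\cL_\alpha$ developed by Gallay--Wayne~\cite{bblGallayWayne05}: on the mean-zero subspace of $L^2_w$,
\[
  \bigl\|e^{\sigma \cL_\alpha} f\bigr\|_w \leq C_\alpha e^{-\sigma/2}\|f\|_w,
  \qquad
  \bigl\|e^{\sigma \cL_\alpha} \dv F\bigr\|_w \leq C_\alpha e^{-\sigma/2}(1-e^{-\sigma})^{-1/2}\|F\|_w,
\]
with $C_\alpha$ depending only on $|\alpha|$. The $D_p$ equation is unchanged and gives $\|D_p(\tau)\|_w \leq C e^{-(\tau-\tau_0)/2}\|D_p(\tau_0)\|_w$ immediately. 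Writing the Duhamel formula for $W_p$,
\[
  W_p(\tau) = e^{(\tau-\tau_0)\cL_\alpha}W_p(\tau_0) - \int_{\tau_0}^{\tau} e^{(\tau-s)\cL_\alpha}\bigl[\beta \dv(\gradinv G \cdot W_p + W_s \gradinv D_p) + \dv(U_p W_p)\bigr](s)\,ds,
\]
I would then define the associated solution map $\Phi$ on the ball $B_{\epsilon_0} \subset C([\tau_0,\infty); L^2_w \times L^2_w)$ of pairs $(W_p, D_p)$ with $\sup_\tau (\|W_p\|_w + \|D_p\|_w) \leq \epsilon_0$.

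Combining the semigroup bounds with standard ingredients --- the embedding $\|\BS f\|_{L^\infty} + \|\gradinv g\|_{L^\infty} \leq C(\|f\|_w+\|g\|_w)$ (via $L^2_w \hookrightarrow L^1 \cap L^2$ and Hardy--Littlewood--Sobolev), the pointwise Gaussian decay of $W_s$ implied by~\eqref{eqnWs}, the product estimate $\|fg\|_w \leq C\|f\|_{L^\infty}\|g\|_w$, and the convergent integral $\int_0^\infty e^{-\sigma/2}(1-e^{-\sigma})^{-1/2}d\sigma = \pi$ --- yields
\[
  \sup_\tau \|W_p(\tau)\|_w \leq C_\alpha\bigl(\|W_p(\tau_0)\|_w + \|D_p(\tau_0)\|_w\bigr) + C_\alpha |\beta|\epsilon_0 + C_\alpha\epsilon_0^2,
\]
with an analogous bound on differences. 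Choosing $\epsilon_0$ small enough that $C_\alpha\epsilon_0 \leq 1/4$ and then $\delta_0 = 1/(4C_\alpha)$ makes $\Phi$ a self-map and contraction on $B_{\epsilon_0}$, producing a unique global fixed point and hence the asserted solution. The main technical obstacle is the linear-in-$W_p$ term inside $\cL_\alpha$: it carries no smallness in $|\alpha|$ and cannot be treated perturbatively against the bare heat semigroup $e^{\sigma\cL}$ --- a naive bound would demand $|\alpha| < 1/\pi$. The resolution is precisely to absorb it into $\cL_\alpha$ and appeal to the Gallay--Wayne spectral-gap theorem, valid for all $\alpha$ with constants depending on $|\alpha|$; this is exactly why the statement allows $\delta_0$ to depend on $\alpha$.
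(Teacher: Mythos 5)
Your overall strategy---write the perturbation equations, apply Duhamel with the linearized semigroup, and run a contraction argument in a ball of radius $\epsilon_0$---is exactly the route the paper has in mind: the paper gives no proof of this lemma at all, stating only that it is a ``straightforward adaptation'' of theorem~1 of Rodrigues, and that theorem is proved by precisely such a fixed-point scheme. So you are reconstructing the intended argument rather than diverging from it.

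There is, however, one genuine gap. Your contraction hinges on the claimed embedding $\norm{\BS f}_{L^\infty} + \norm{\gradinv g}_{L^\infty} \leq C(\norm{f}_w + \norm{g}_w)$, justified ``via $L^2_w \hookrightarrow L^1\cap L^2$ and Hardy--Littlewood--Sobolev.'' This is false: the kernel $x^\perp/(2\pi\abs{x}^2)$ maps $L^1\cap L^q$ into $L^\infty$ only for $q>2$ (this is part~2 of proposition~\ref{kernel_bounds}, which requires $q>2$ strictly), and $L^2$ sits exactly at the forbidden endpoint of Hardy--Littlewood--Sobolev. The paper's own estimate~\eqref{eqnBSbound} reflects this: it controls $\norm{\BS W_p}_{L^\infty}$ by $\norm{W_p}_w + \norm{\nabla W_p}_w$, not by $\norm{W_p}_w$ alone. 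Consequently the quadratic term $\dv(U_p W_p)$ cannot be closed in the ball $B_{\epsilon_0}\subset C([\tau_0,\infty);L^2_w\times L^2_w)$ as you have set it up. The standard repair---and what Rodrigues and Gallay--Wayne actually do---is a Kato-type two-norm iteration: augment the fixed-point norm with a smoothing norm such as $\sup_\tau a(\tau-\tau_0)^{1/4}\norm{G^{-1/2}W_p(\tau)}_{L^4}$, which the semigroup supplies through $\norm{S(\sigma)}_{L^2\to L^4}\leq C a(\sigma)^{-1/4}$ (part~3 of lemma~\ref{SemiGroupBds}), at the cost of an additional integrable singularity $a(\tau-s)^{-1/4}$ in the Duhamel integral. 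Two smaller points: the term $\dv(W_s\,\gradinv D_p)$ carries the coefficient $\alpha$, not $\beta$ (compare~\eqref{eqnPerturbative}); this is harmless since $\norm{D_p(\tau)}_w$ is itself small and decaying, but it means that contribution is controlled by $\norm{D_p(\tau_0)}_w$ rather than by $\abs{\beta}$. Also, the Gallay--Wayne spectral bound you invoke concerns the linearization about $G$, whereas your $\cL_\alpha$ is built from $W_s$; you should first split off the $O(\abs{\beta})$ difference using~\eqref{eqnWsMinusG} before citing it.
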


In order to show convergence to the steady state, we work with the equation for the perturbation,
\begin{equation}\label{eqnPerturbative}
  \partial_\tau W_p + \dv \paren[\big]{ U W_p + \alpha \BS W_p W_s + \alpha \gradinv D_p W_s} = \mathcal L W_p.
\end{equation}
We multiply~\eqref{eqnPerturbative} by $G^{-1}W_p$ and integrate to obtain
\begin{multline}\label{eqnWpInt}
  \frac{1}{2} \partial_\tau \norm{W_p}_w^2
    + \int_{\R^2} G\inv W_p
	\dv \paren[\big]{ U W_p + \alpha \BS W_p W_s + \alpha \gradinv D_p W_s}
  \\
      = \int_{\R^2} G\inv W_p \cL W_p.
\end{multline}

We estimate each term individually.
First, for the right hand side, we use a coercivity estimate proven in~\cite{bblRodrigues09}.
Namely, since $\int W_p \, d\xi = 0$,
for any $\gamma \in (0, 1/2)$ and $\epsilon>0$ such that $\gamma + 1000\epsilon < 1/2$, we have
\begin{equation}\label{eqnLSpectral}
  -\int G^{-1} W_p \cL W_p
	  \geq (\gamma+\epsilon) \|W_p\|_w^2 + \frac{1-2(\gamma+\epsilon)}{2}\left[\frac{1}{3}\|\nabla W_p\|_w^2 + \frac{1}{32}\|\xi W_p\|_w^2\right].
\end{equation}
This is proved by first observing operator $L \defeq -G^{-1/2} \mathcal L G^{1/2}$ is a harmonic oscillator with spectrum $\set{ 0, 1/2, 1, 3/2, \dots }$ where $0$ is a simple eigenvalue.
Combining this with a standard energy estimate shows~\eqref{eqnLSpectral}, and we refer the reader to~\cite{bblGallayWayne02}*{Appendix A} or~\cite{bblRodrigues09}*{\S3.1} for the details.
We assume, without loss of generality, that $\gamma > 1/4$.

For the first term in the integral on the left of~\eqref{eqnWpInt}, observe
\begin{align}
  \MoveEqLeft
  \nonumber
  \int_{\R^2} G\inv W_p \dv (U W_p) \, d\xi
    = \int_{\R^2} \paren[\big]{ G\inv W_p^2 D + \frac{1}{2} G\inv U  \cdot \nabla \paren[\big]{W_p^2} } \, d\xi
  \\
  \nonumber
    &= \frac{1}{2} \int_{\R^2} G\inv W_p^2 \paren[\big]{D - \frac{1}{2} \xi \cdot U } \, d\xi 
  \\
  \label{eqnWpInt1}
    &=
      \frac{1}{2} \int_{\R^2} G\inv W_p^2 \paren[\big]{D - \frac{1}{2} \xi \cdot \gradinv D } \, d\xi 
      + \int_{\R^2} G\inv W_p (\BS W_p) \cdot \nabla W_p \, d\xi,
\end{align}
since $\BS W_s \cdot \xi = 0$.

To estimate this we claim
\begin{gather}
  \label{eqnDbound}
    \|D\|_w + \|D\|_{L^\infty}
      + \|\gradinv D\|_{L^\infty}
      \leq C \left[ \abs{\beta} + \|D_p(0)\|_w\right],
  \\
  \label{eqnBSbound}
  \llap{\text{and}\qquad}
  \norm{\BS W_p}_{L^\infty} \leq C \paren[\big]{ \norm{W_p}_w + \norm{\nabla W_p}_w },
\end{gather}
for some constant $C$ that is independent of $D_0, W_p$ and $\beta$.
To avoid breaking continuity we defer the proof of these estimates to section~\ref{sxnDivBounds} and continue with our proof of theorem~\ref{thmBetaNonZero} here.

Let $\epsilon_0$ to be a small constant to be determined later.
Using lemma~\ref{lmaExistence}, choose $\delta_0$ to guarantee~\eqref{eqnWpDpSmall} holds.
Then, returning to~\eqref{eqnWpInt1} we see
\begin{equation*}
  \abs[\Big]{\int_{\R^2} G\inv W_p \dv (U W_p) \, d\xi}
  \leq
    C \paren[\big]{ \abs{\beta} + \norm{D_p(\tau_0)}_w + \epsilon_0 }
      \paren[\big]{ \norm{W_p}_w^2 + \norm{\nabla W_p}_w^2 }.
\end{equation*}

For the second term in the integral on the left of~\eqref{eqnWpInt} we obtain smallness by using the fact that this term vanishes when $W_s = G$.
Indeed,
\begin{equation*}
  \alpha \int_{\R^2} G\inv W_p \, \BS W_p \cdot \nabla W_s \, d\xi 
    = -\alpha \int_{\R^2} G\inv W_s \BS W_p
	  \paren[\big]{ \nabla W_p + \frac{\xi}{2} W_p } \, d\xi,
\end{equation*}
which vanishes when $W_s = G$ due to the identity~\eqref{eqnNonMagic}.
Consequently,
\begin{multline}\label{eqnWpInt2}
  \alpha \int_{\R^2} G\inv W_p \, \BS W_p \cdot \nabla W_s \, d\xi 
    \\
    = -\alpha \int_{\R^2} G\inv (W_s - G) \BS W_p
	  \paren[\big]{ \nabla W_p + \frac{\xi}{2} W_p } \, d\xi.
\end{multline}
We claim that for all $\beta$ sufficiently small,
\begin{equation}\label{eqnWsMinusG}
  \norm{W_s - G}_w \leq C \abs{\beta},
\end{equation}
for some universal constant $C$.
Again, to avoid breaking continuity, we defer the proof of~\eqref{eqnWsMinusG} to section~\ref{sxnDivBounds}, and continue with the proof theorem~\ref{thmBetaNonZero}.

Equations~\eqref{eqnWpInt2} and~\eqref{eqnWsMinusG} immediately show
\begin{align}
  \MoveEqLeft\nonumber
  \abs[\Big]{ \alpha \int_{\R^2} G\inv W_p \, \BS W_p \cdot \nabla W_s \, d\xi }
  \\
  \nonumber
  &\leq
    C \abs{\alpha \beta}
	\norm{\BS W_p}_{L^\infty}
	\norm{W_p}_w \norm{\nabla W_s}_w
  \\
  \label{eqnWpInt22}
  &\leq
    C \abs{\alpha \beta}
      \paren{
	\norm{W_p}_w^2 + \norm{\nabla W_p}_w^2
      }.
\end{align}
For the last inequality above we absorbed $\norm{\nabla W_s}_w$ into the constant $C$, and used~\eqref{eqnBSbound} and interpolation.

For the last term in the integral on the left of~\eqref{eqnWpInt} observe
\begin{align*}
  \abs[\Big]{\alpha \int_{\R^2}
    G\inv  W_p 
    \dv ( \gradinv D_p W_s ) \, d\xi}
  &= \abs[\Big]{\alpha \int_{\R^2}
      G\inv W_p (D_p W_s + \gradinv D_p \cdot \nabla W_s) \, d\xi}
  \\
  &\leq
    \abs{\alpha} \norm{W_p}_w \paren[\big]{\norm{D_p}_{L^\infty} + \norm{\gradinv D_p}_{L^\infty}} \norm{W_s}_w
  \\
  & \leq
    C \abs{\alpha} \norm{W_p}_w
      \paren[\big]{
	\norm{D_p}_{L^\infty}
	+ \norm{D_p}_{L^1}^{1/2} \norm{D_p}_{L^\infty}^{1/2}
      }.
\end{align*}
The last estimate followed from the interpolation inequality
\begin{equation}\label{eqnGradInvDBd}
  \norm{\gradinv D_p}_{L^\infty}
    \leq C \norm{D_p}_{L^1}^{1/2} \norm{D_p}_{L^\infty}^{1/2},
\end{equation}
the proof of which can be found in~\cite{bblRodrigues09} or~\cite{bblGallayWayne02} (see also proposition~\ref{kernel_bounds} in section~\ref{sxnCompactness}, below).

Since $D_p$ satisfies~\eqref{eqnD} with mean-zero initial data $D_p(\tau_0) \in L^1(1)$, it must satisfy the decay estimate~\eqref{eqnDfastDecay}.
Thus
\begin{align*}
  \abs[\Big]{\alpha \int_{\R^2}
    G\inv  W_p 
    \dv ( \gradinv D_p W_s ) \, d\xi}
  &\leq C \norm{D_p(\tau_0)}_{L^1(1)} e^{-\tau/2} \norm{W_p}_w
  \\
  &\leq \frac{\epsilon}{8} \norm{W_p}_w^2 + C \norm{D_p(\tau_0)}_{L^1(1)}^2 e^{-\tau}.
\end{align*}

Making $(1 + \abs{\alpha})\abs{\beta}$, $\delta_0$ and $\epsilon_0$ small enough, our estimates so far give
\begin{multline*}
	\frac{1}{2} \partial_\tau \|W_p\|_w^2 + (\gamma+\epsilon) \|W_p\|_w^2 + \frac{1 - 2(\gamma+\epsilon)}{2} \left[\frac{1}{3}\|\nabla W_p\|_w^2 + \frac{1}{32} \|\xi W_p \|_w^2\right]\\
	\leq
	  \epsilon\left[\|W_p\|_w^2 + \|\xi W_{p, k+1}\|_w^2 + \|\nabla W_p\|_w^2\right]
	  + C e^{-\tau} \norm{D_p(\tau_0)}_{L^1(1)}^2.
\end{multline*}
Because we chose $\epsilon$ small enough, the first three terms on the right can be absorbed in the left.
Consequently,
\begin{equation*}
  \partial_\tau \|W_p(\tau)\|_w^2 + 2\gamma \|W_p\|_w^2
  \leq C e^{-\tau} \norm{D_p(0)}_{L^1(1)}^2,
\end{equation*}
which immediately implies~\eqref{eqnWpBound}.
\end{proof}

\subsection{Proofs of estimates}\label{sxnDivBounds}
In this section, we prove the bounds~\eqref{eqnDbound}, \eqref{eqnBSbound} and \eqref{eqnWsMinusG}, which were used in the proof of theorem~\ref{thmBetaNonZero}.
We begin with the bounds on the divergence.
\begin{lemma}
Let $D$ satisfy~\eqref{eqnD} with initial data $D_0 \in L^2(w)$, and let $\beta = \int D_0 d\xi$.
Then if $D_p = D - \beta G$, there exists a uniform constant $C>0$ such that~\eqref{eqnDbound} holds.
\end{lemma}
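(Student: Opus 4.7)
The plan is to decompose $D = \beta G + D_p$ and estimate each piece separately, exploiting the fact that $D_p$ has mean zero. First I would verify that the mean is conserved: a direct computation using $\nabla G = -\xi G / 2$ shows $\mathcal L G = 0$, so $\beta G$ is a steady state of~\eqref{eqnD}; integration by parts gives $\int \mathcal L f \, d\xi = 0$ for any $f$ with enough decay, so the integral of $D$ is preserved in time. Consequently $D_p = D - \beta G$ solves $\partial_\tau D_p = \mathcal L D_p$ and satisfies $\int D_p(\tau) \, d\xi = 0$ for all $\tau$.

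For the $L^2_w$ bound I would invoke the spectral gap for $\mathcal L$ on mean-zero functions. The conjugated operator $L \defeq -G^{-1/2} \mathcal L G^{1/2}$ is, up to a constant, a two-dimensional harmonic oscillator with spectrum $\{k/2 : k \in \N_0\}$; the zero eigenspace is spanned by $G^{1/2}$, and mean-zero functions in $L^2_w$ correspond (after conjugation) to the orthogonal complement of this eigenspace. Hence
\begin{equation*}
\norm{D_p(\tau)}_w \leq e^{-(\tau - \tau_0)/2} \norm{D_p(\tau_0)}_w,
\end{equation*}
which, combined with $\norm{\beta G}_w \leq C \abs{\beta}$, gives the $\norm{D}_w$ part of~\eqref{eqnDbound}. (This spectral-gap argument is essentially what is recorded in~\cite{bblGallayWayne02}*{Appendix~A}.)

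For the $L^\infty$ bound, return to the original coordinates: if $d$ satisfies the heat equation on $\R^2$ with $d(\cdot, t_0)$ obtained from $D_0$ by the rescaling~\eqref{eqnVarChange}, then $\norm{D(\tau)}_{L^\infty} = t \norm{d(t)}_{L^\infty}$ with $t = e^\tau$. The standard $L^1 \to L^\infty$ smoothing of the heat kernel yields
\begin{equation*}
\norm{d(t)}_{L^\infty} \leq \frac{\norm{d(t_0)}_{L^1}}{4\pi (t - t_0)},
\end{equation*}
and since $\norm{d(t_0)}_{L^1} = \norm{D_0}_{L^1} \leq C \norm{D_0}_w \leq C(\abs\beta + \norm{D_p(\tau_0)}_w)$ by Cauchy--Schwarz against $G^{1/2}$, we obtain a uniform $L^\infty$ bound on $D$ for $t$ bounded away from $t_0$ (i.e.\ after any positive $\tau$-lapse). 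Finally, the bound on $\gradinv D$ follows by splitting $\gradinv D = \beta \gradinv G + \gradinv D_p$: the first summand is bounded by $C \abs\beta$ since $\gradinv G$ is an $L^\infty$ function, and for the second I would apply the interpolation~\eqref{eqnGradInvDBd} combined with $\norm{D_p}_{L^1} \leq C \norm{D_p}_w$ and the $L^\infty$ bound just established.

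The main obstacle is the $L^\infty$ control near the initial time: since $L^2_w \not\subset L^\infty$, this bound must be obtained from parabolic smoothing rather than from the initial weighted norm, which is the reason one naturally works with $\tau$ bounded away from $\tau_0$ (or equivalently, slides the initial time slightly forward using the fact that $t_0 > 0$ in Theorem~\ref{thmBetaNonZero}).
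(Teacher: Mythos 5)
Your proposal is correct and follows essentially the same route as the paper: a spectral/coercivity estimate for $\mathcal L$ in the Gaussian-weighted space gives the $\norm{D}_w$ bound, classical heat-kernel smoothing in the original $x$-$t$ coordinates gives the $L^\infty$ bound, and the interpolation inequality~\eqref{eqnGradInvDBd} handles $\gradinv D$. The only cosmetic differences are that you decompose $D = \beta G + D_p$ and use semigroup decay on the mean-zero part where the paper applies the energy form of the same spectral fact~\eqref{eqnLSpectral} to $D$ directly, and that you explicitly flag the need for a positive time lapse in the $L^\infty$ estimate, a point the paper glosses over.
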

\begin{proof}
Multiplying~\eqref{eqnD} by $G^{-1}D$, integrating and using the coercivity estimate~\eqref{eqnLSpectral} gives
\[
	\frac{1}{2}\partial_\tau \|D\|_w^2
		+ \frac{1}{4}\left[ \|D\|_w^2 + \frac{1}{3} \|\nabla D\|_w^2 + \frac{1}{32} \|\xi D\|_w^2 \right] \leq 0.
\]
Integrating this inequality in $\tau$ gives us the desired inequality for $\|D\|_w$.

Further, in the standard $x$-$t$ coordinates, $D$ solves the heat equation.
The classical estimates for solutions to the heat equation give us
\[
	\|D(\tau)\|_{L^\infty} + \|D(\tau)\|_{L^1} \leq C\|D(\tau_0)\|_{L^1} \leq C \|D(\tau_0)\|_w.
\]
Combined with the interpolation inequality~\eqref{eqnGradInvDBd} this yields the same bound for $\norm{\gradinv D}_{L^\infty}$, completing the proof.
\end{proof}


Now we turn to~\eqref{eqnBSbound}, which follows using the Sobolev embedding theorem and interpolation.
\begin{proof}[Proof of inequality~\eqref{eqnBSbound}]
  We know that the Biot-Savart operator satisfies the interpolation inequality
  \begin{equation*}
    \norm{\BS W_p}_{L^\infty} \leq C \norm{W_p}_{L^{4/3}}^{1/2} \norm{W_p}_{L^4}^{1/2}.
  \end{equation*}
  The proof is the same as that of~\eqref{eqnGradInvDBd}, and can be found in~\cites{bblRodrigues09, bblGallayWayne02} (see also proposition~\ref{kernel_bounds} in section~\ref{sxnCompactness}, below).
  Combining this with Sobolev inequality we obtain
  \begin{multline*}
    \norm{\BS W_p}_{L^\infty}
      \leq C \norm{W_p}_{L^{4/3}}^{1/2} \norm{W_p}_{L^4}^{1/2}
      \leq C \norm{W_p}_{L^{4/3}}^{1/2} \norm{\nabla W_p}_{L^{4/3}}^{1/2}
    \\
      \leq C \norm{W_p}_{L^2(w)}^{1/2} \norm{\nabla W_p}_{L^2(w)}^{1/2}
      \leq C \paren[\big]{\norm{W_p}_{L^2(w)} + \norm{\nabla W_p}_{L^2(w)} },
  \end{multline*}
  as desired.
\end{proof}

Finally, we prove~\eqref{eqnWsMinusG} showing $W_s$ is close to $G$ when $\beta$ is small.
\begin{lemma}
Let $W_s\in L_w^2$ be a solution to equation~\eqref{eqnWs}.
Then there is a universal constant $C > 0$ such that  such that the inequality~\eqref{eqnWsMinusG} holds for all $\beta$ sufficiently small.
\end{lemma}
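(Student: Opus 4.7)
The plan is to solve the ODE~\eqref{eqnWs} explicitly and expand in $\beta$ about $\beta=0$. Separating variables and integrating yields
\[
  W_s(r) = c(\beta) \exp\paren[\Big]{-\frac{r^2}{4} + \frac{\beta}{2\pi} F(r)},
  \qquad
  F(r) \defeq \int_0^r \frac{1 - e^{-s^2/4}}{s}\, ds,
\]
where $c(\beta) > 0$ is chosen so that $\int_{\R^2} W_s\, d\xi = 1$. The integrand defining $F$ is smooth at $0$ (behaving like $s/4$), so $F \in C^\infty([0,\infty))$ with $F(0)=0$; moreover, splitting $\int_1^r (1 - e^{-s^2/4})/s\, ds = \log r - \int_1^r e^{-s^2/4}/s\, ds$ shows that $F(r) = \log r + O(1)$ as $r\to\infty$. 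In particular $|F(r)| \leq C(1 + \log(1+r))$ on $[0,\infty)$.

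First I would show $|c(\beta) - 1/(4\pi)| \leq C|\beta|$ for all $|\beta|$ sufficiently small. The reciprocal $J(\beta) \defeq c(\beta)^{-1} = \int_{\R^2} e^{-r^2/4 + \beta F(r)/(2\pi)}\, d\xi$ satisfies $J(0) = 4\pi$, and the logarithmic growth of $F$ together with the Gaussian factor allows differentiation under the integral sign; hence $J$ (and therefore $c$) is $C^1$ near $0$, which yields the linear bound.

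Second, decompose
\[
  W_s - G
    = e^{-r^2/4}\brak[\Big]{ c(\beta)\paren[\big]{e^{\beta F(r)/(2\pi)} - 1} + \paren[\big]{c(\beta) - \tfrac{1}{4\pi}} }.
\]
The elementary estimate $|e^x - 1| \leq |x| e^{|x|}$ gives
\[
  \abs[\big]{e^{\beta F(r)/(2\pi)} - 1}
    \leq \frac{|\beta|}{2\pi}\, |F(r)|\, e^{|\beta||F(r)|/(2\pi)}
    \leq C|\beta|\, (1 + \log(1+r))\,(1+r)^{|\beta|/(2\pi)},
\]
for $|\beta| \leq \beta_0$. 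Combined with the step above and the identity $G^{-1}(\xi) = 4\pi e^{r^2/4}$, one obtains
\[
  \|W_s - G\|_w^2
    = 4\pi \int_{\R^2} e^{-r^2/4}
        \brak[\Big]{ c(\beta)(e^{\beta F/(2\pi)} - 1) + (c(\beta) - \tfrac{1}{4\pi}) }^2
        d\xi
    \leq C \beta^2,
\]
since the Gaussian factor $e^{-r^2/4}$ absorbs the polynomial/logarithmic factors. This proves~\eqref{eqnWsMinusG}.

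I do not expect any serious obstacle: the explicit formula for $W_s$ reduces the problem to standard dominated-convergence and Gaussian-integrability estimates. The one mild subtlety is checking that the logarithmic growth of $F$ at infinity produces at most a polynomial perturbation of the Gaussian, which is then trivially integrable against $e^{-r^2/4}$.
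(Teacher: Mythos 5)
Your proof is correct, but it takes a genuinely different route from the paper. You integrate the separable ODE~\eqref{eqnWs} explicitly, writing $W_s(r) = c(\beta)\exp(-r^2/4 + \beta F(r)/(2\pi))$ with $F(r) = \int_0^r s^{-1}(1-e^{-s^2/4})\,ds$, and then Taylor-expand in $\beta$; the key observations --- that $F$ grows only logarithmically, so $e^{\beta F/(2\pi)}$ is a fixed polynomial perturbation of the Gaussian for $|\beta|\leq\beta_0$, and that the normalization constant $c(\beta)$ is $C^1$ near $0$ by dominated convergence --- are all sound, and the Gaussian weight $G^{-1}|W_s-G|^2 = 4\pi e^{-r^2/4}[\cdots]^2$ is indeed integrable with a uniform constant. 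The paper instead never solves the ODE: it sets $P_s = W_s - G$, derives the elliptic equation $\cL P_s = \beta G P_s + \beta\gradinv G\cdot\nabla P_s + \beta G^2 + \beta\gradinv G\cdot\nabla G$, pairs with $G^{-1}P_s$, and invokes the coercivity estimate~\eqref{eqnLSpectral} to absorb the $O(\beta)$ terms for small $\beta$. Your explicit approach is more elementary and actually yields a stronger conclusion --- a pointwise bound $|W_s - G| \leq C|\beta|(1+\log(1+r))(1+r)^{C\beta_0}e^{-r^2/4}$, not just the $L^2_w$ bound --- but it leans entirely on the radial separable structure of~\eqref{eqnWs}. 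The paper's energy argument buys robustness: it reuses the spectral machinery for $\cL$ already deployed throughout section~\ref{sxnRodrigues} and would survive perturbations of the equation that destroy explicit solvability. Either proof is acceptable here.
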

\begin{proof}
Define $P_s = W_s - G$.  Notice that this solves
\[ \cL P_s = \beta G P_s + \beta\gradinv G \cdot \nabla P_s + \beta G^2 + \beta \gradinv G \cdot \nabla G.\]
Multiply this equation by $G^{-1} P_s$ and using~\eqref{eqnLSpectral}, with $\gamma = 1/4$, to obtain that
\[\begin{split}
\frac{1}{4} \|P_s\|_w^2 + \frac{1}{4}\left[\frac{1}{3}\|\nabla P_s\|_w^2 + \frac{1}{32}\|\xi P_s\|_w^2\right]
	&\leq - \int G^{-1} P_s \cL P_s\\
	&= - \beta \int P_s^2 - \beta \int G^{-1} P_s \gradinv G\cdot \nabla P_s  \\
	& -\beta \int GP_s - \beta\int G^{-1} P_s \gradinv G \cdot \nabla G\\
	&\leq (2|\beta|+\epsilon)\left[ \|P_s\|_w^2 + \|\nabla P_s\|_w^2\right] + |\beta|^2C_\epsilon
.\end{split}\]
Here $\epsilon < 1/20$ is a positive constant.
Then when $\beta$ is sufficiently small, we may absorb the terms on the last line into the left hand side, giving~\eqref{eqnWsMinusG} as desired.
\end{proof}

\section{Bounds for the vorticity}\label{sxnInequalities}

Bounds on the vorticity to the standard 2D incompressible Navier-Stokes equations are well known.
In this section we prove the analogues of these bounds for the extended Navier-Stokes equations~\eqref{eqnExtendedNS1}.

We begin with the vorticity decay in $L^p$.
The strategy for this proof is not entirely different from the classical case, however, the appearance of a divergence term complicates matters and yields a slightly different final estimate.
We will use this estimate in the proof of~\eqref{eqnWbound} and in our discussion of well-posedness in section~\ref{sxnWellPosed}.

\begin{lemma}\label{e_vorticity_decay}
Let $p$ be an element of $[1,\infty]$, and suppose that $(\omega, d)$ solve the system~\sysOmegaD with $\omega_0, d_0 \in L^1$.  Then there exists $C>0$, depending only on $p$, $\|\omega_0\|_{L^1}$, and $\|d_0\|_{L^1}$ such that
\begin{equation}\label{vort_decay}
	\|\omega\|_{L^p} + t^{1/2}\|\nabla \omega\|_{L^p} \leq \frac{C}{t^{1-1/p}}
\end{equation}
and
\begin{equation}\label{vort_grad_decay}
	\|\nabla\omega\|_{L^p} \leq \frac{C}{t^{3/2 - 1/p}}.
\end{equation}
\end{lemma}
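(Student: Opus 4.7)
The plan is to adapt the standard Kato--Ben-Artzi parabolic regularization strategy for the two-dimensional vorticity equation to account for the non-vanishing divergence. After expanding $\dv(u\omega) = u\cdot\nabla\omega + d\omega$, equation~\eqref{eqnENSOmega} reads
\begin{equation*}
  \delt \omega + u\cdot\nabla\omega + d\omega = \lap \omega.
\end{equation*}
The first step is to establish the $L^1$ bound $\norm{\omega(t)}_{L^1} \le \norm{\omega_0}_{L^1}$: multiplying by $\sign(\omega)$ shows that $|\omega|$ is a subsolution of the transport--diffusion equation $\delt f + \dv(uf) = \lap f$, and integrating in $x$ yields the non-increase of $\norm{\omega(t)}_{L^1}$. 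No divergence-free assumption on $u$ is needed here because the $d\omega$ term exactly cancels the $\nabla\cdot(u|\omega|)$ piece after integration.

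The second step is a Duhamel-based bootstrap for $L^p$. Since $\norm{d(t)}_{L^\infty}$ may blow up like $t^{-1}$ as $t\to 0^+$, I would start the Duhamel representation from $t/2$,
\begin{equation*}
  \omega(t) = e^{(t/2)\lap}\omega(t/2) - \int_{t/2}^t e^{(t-s)\lap}\dv(u\omega)(s)\, ds,
\end{equation*}
and apply the standard heat semigroup bounds $\norm{e^{\tau\lap}f}_{L^p}\le C\tau^{-(1/q-1/p)}\norm{f}_{L^q}$ and $\norm{\nabla e^{\tau\lap}f}_{L^p}\le C\tau^{-(1/q-1/p)-1/2}\norm{f}_{L^q}$. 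To control $u$ in $L^r$ in terms of $\omega$ and $d$ in $L^p$, I would split $u = \BS\omega + \gradinv d$ and invoke the Hardy--Littlewood--Sobolev inequality, since both $K_{BS}$ and $\gradinv$ are convolution operators with kernels of degree $-1$. The classical heat kernel representation of $d$ contributes $\norm{d(t)}_{L^p}\le Ct^{-(1-1/p)}\norm{d_0}_{L^1}$, matching the expected scaling. Iterating this estimate through a finite chain $1=p_0<p_1<\dots<p_N=\infty$, starting from the $L^1$ bound of Step~1, would yield $\norm{\omega(t)}_{L^p}\le C t^{-(1-1/p)}$ uniformly for $p\in[1,\infty]$, with $C$ depending only on $\norm{\omega_0}_{L^1}$ and $\norm{d_0}_{L^1}$.

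Once $\omega$ is controlled in every $L^p$, the gradient bound~\eqref{vort_grad_decay} follows by applying $\nabla$ to the same Duhamel formula centred at $t/2$. The extra factor $\tau^{-1/2}$ from $\norm{\nabla e^{\tau\lap}}_{L^p\to L^p}\le C\tau^{-1/2}$ produces the $t^{-(3/2-1/p)}$ decay, and the nonlinear term $\nabla\dv(u\omega)$ is estimated using the control of $\omega$, $\nabla\omega$, $d$, and $\nabla d$ already obtained in lower-order steps. The combined statement in~\eqref{vort_decay} follows by adding the two bounds.

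I expect the main obstacle to be handling the two ``extra'' contributions of $d$, namely the forcing term $d\omega$ in the vorticity equation and the term $\gradinv d$ in $u$, neither of which appears in the standard incompressible case. Both must be shown to have precisely the scaling needed to close the Duhamel bootstrap, and starting the Duhamel representation from $t/2$ rather than $0$ is essential to keep the time integrals convergent despite the short-time singularity $\norm{d(t)}_{L^\infty}\lesssim t^{-1}\norm{d_0}_{L^1}$. Because every estimate used depends only on $L^1$ data, the resulting constants have the form required by the lemma.
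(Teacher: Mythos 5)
Your argument is correct in outline, but it takes a genuinely different route from the paper for the heart of the lemma, the $L^1\to L^\infty$ smoothing estimate. You run the Kato--Ben-Artzi mild-solution bootstrap: Duhamel from $t/2$, heat-semigroup smoothing, and Hardy--Littlewood--Sobolev for both $K_{BS}*$ and $\gradinv$, climbing a finite chain of exponents. The paper instead uses a Nash-type energy argument: after the same $L^1$ bound (obtained there by splitting $\omega_0$ into positive and negative parts rather than by Kato's inequality, an equivalent device), it derives $\tfrac{d}{dt}\norm{\omega}_{L^2}^2\le -2\norm{\nabla\omega}_{L^2}^2+2\norm{d}_{L^\infty}\norm{\omega}_{L^2}^2$, invokes Nash's inequality via the Fourier transform, and analyzes the ODE for $\phi(t)=t\norm{\omega}_{L^2}^2$ to get $L^1\to L^2$ smoothing; $L^2\to L^\infty$ then follows by duality and composition over $[0,t/2]$ and $[t/2,t]$, with intermediate $p$ by interpolation. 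The delicate point in the paper's route is that $\norm{d}_{L^\infty}\lesssim \norm{d_0}_{L^1}/t$ would naively produce a loss $t^{C\norm{d_0}_{L^1}}$ upon integration, and the choice of $R$ in the Nash step is exactly what absorbs it; your route sidesteps this entirely because $d$ enters only through $\gradinv d$, which by HLS has the same scaling $t^{-(1/2-1/r)}$ as the Biot--Savart term. The price you pay is the exponent bookkeeping along the chain and, more importantly, a point you gloss over: at each bootstrap step the quantity $\sup_s s^{1-1/p_{k+1}}\norm{\omega(s)}_{L^{p_{k+1}}}$ must be known finite before it can be estimated, which requires first working with smooth data (or a continuity argument in a scaled norm) and passing to the limit; this is standard and is also left implicit in the references, but you should say it. For the gradient bound the two proofs coincide: the paper simply cites Kato's Duhamel argument, which is what you describe.
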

\begin{proof}
We omit the proof of the bound on the gradient.  Indeed, by following the work in~\cite{bblKato94}*{proposition~4.1}, we note that the estimate relies only on~\eqref{vort_decay} and Duhamel's principle.  In view of this, obtaining this result is a straightforward adaptation.

Now, we obtain the $L^p$ bound by obtaining a bound in $L^1$ and $L^\infty$ and interpolating.  The $L^1$ bound follows by splitting $\omega_0$ into its positive and negative parts, using the maximum principle, and using that the mass is preserved.

The classical technique for obtaining the $L^\infty$ bound has three steps: (i) get a bound on the $L^2$ norm in terms of the $L^1$ norm divided by $t^{1/2}$, (ii) show that this gives a bound on the $L^\infty$ norm in terms of the $L^2$ norm divided by $t^{1/2}$ for the adjoint problem, and (iii) apply these inequalities over $[0,t/2]$ and $[t/2,t]$ to finish.  Since the work in (ii) is the same as the work in (i) and since (iii) is unchanged from the classical setting, we simply show the first step (i).  To this end, multiplying our equation by $\omega$ and integrating by parts gives us
\[
	\frac{d}{dt} \|\omega \|_{L^2}^2 \leq  -2\| \nabla \omega\|_{L^2}^2 + 2\|d\|_{L^\infty} \|\omega\|_{L^2}^2
.\]
Using the Fourier transform, we see that there is a constant $C>0$ such that for any $R$,
\[\begin{split}
	\|\hat\omega\|_{L^2}^2 &\leq \int_{B_R^c} \frac{|\xi|^2}{R^2}|\hat\omega|^2d\xi + \int_{B_R} |\hat\omega|^2 d\xi\\
		&\leq \frac{1}{R^2} \int |\xi|^2 |\omega|^2 d\xi + \int_{B_R} \|\hat\omega\|_{L^\infty}^2 d\xi\\
		&\leq \frac{1}{R^2} \|\nabla\omega\|_{L^2}^2 + C R^2 \|\omega\|_{L^1}^2.
\end{split}\]
Using $R = \|\omega\|_{L^2}^2/(2C\|\omega_0\|_{L^1}^2)$ along with these inequalities  yields
\begin{equation}\label{e_diff_inequality}
	\frac{d}{dt} \|\omega\|_{L^2}^2
		\leq \left[ \frac{C \|d_0\|_{L^1}}{t} - \frac{\|\omega\|_{L^2}^2}{2C \|\omega_0\|_{L^1}^2} \right] \|\omega\|_{L^2}^2\\
.\end{equation}
Here we used the standard estimates for the heat equation, and then we used Young's inequality.  Define $\phi(t) = t \|\omega\|_{L^2}^2$ to obtain
\[
	\phi'(t) \leq \frac{\phi}{t}\left[\|d_0\|_{L^1} - \frac{\phi}{2C \|\omega_0\|_{L^1}^2} + 1 \right].
\]
This implies that $\phi \leq 2C \|\omega_0\|_{L^1}^2[ \|d_0\|_{L^1}^2 + 1]$, which proves our claim.
\end{proof}

Now, we prove the pointwise, heat kernel type bound on the vorticity when $\beta = 0$ stated in lemma~\ref{lmaCompactness}.  We use the increased decay of the heat equation when the initial data is mean-zero here.  The key point here is that the $L^\infty$ norm of the divergence is integrable in time, so we may reproduce the classical arguments in this case.  We follow the work of Carlen and Loss in~\cite{bblCarlenLoss95} in order to do this.

\begin{proof}[Proof of~\eqref{eqnWbound}]
Our first step is to obtain bounds for the equation
\begin{equation}\label{kernel_linear_problem}
	\phi_t = \Delta \phi + \nabla\cdot(b \phi) + c \phi.
\end{equation}
which depend only on certain norms of $b$ and $c$.  To this end, fix $T>0$ and we let $r(t)$ be a monotone increasing, smooth function defined on $[0,T]$ to be determined later.  In addition, we may assume without loss of generality that $\phi$ is non-negative.  Then we calculate
\[\begin{split}
r(t)^2 \|\phi\|_{L^r}^{r-1} \frac{d}{dt}\|\phi\|_{L^r}
	&= \dot{r} \int \phi^r \log\left( \frac{\phi^r(x)}{\|\phi\|_{L^r}}\right) dx + r(t)^2 \int \phi^{r-1}\phi_t dx\\
	&= \dot{r} \int \phi^r \log\left( \frac{\phi^r(x)}{\|\phi\|_{L^r}}\right) dx\\
	  &\qquad + r(t)^2 \int \phi^{r-1}\left(\Delta \phi + \nabla\cdot(b\phi) + c\phi\right)dx\\
	&=  \dot{r} \int \phi^r \log\left( \frac{\phi^r(x)}{\|\phi\|_{L^r}}\right) dx + 4(r-1) \int \left| \nabla\left(\phi^{r/2}\right)\right|^2 dx\\
	&\ \ \ \  + \int r(r-1) \phi^r \left(\nabla \cdot b\right) + r^2 \int c\phi^r dx.
\end{split}\]
The log-Sobolev inequality~\cite{bblCarlenLoss95}*{Equation (1.17)}, which the authors derive from the work in~\cite{bblGross75}, is
\begin{equation}\label{lsi}
\int |f|^2 \log\left(\frac{f^2}{\|f\|_{L^2}^2}\right) dx + (2 + \log(a))\int |f|^2 dx
	\leq \frac{a}{\pi}\int |\nabla f|^2 dx,
\end{equation}
for any $f \in H^1$ and $a\in (0,\infty)$.  Applying this with $a = 4\pi (r-1)/\dot{r}$, gives us
\[\begin{split}
r(t)^2 \|\phi\|_{L^r}^{r-1} \frac{d}{dt}\|\phi\|_{L^r} \leq& - \dot{r} \left(2 + \log\left( \frac{4\pi(r-1)}{\dot{r}}\right)\right) \|\phi\|_{L^r}^r\\
	& + \left(r(r-1) B(t) + r^2 C(t)\right)\|\phi\|_{L^r}^r
,\end{split}\]
where $B(t) = \|\nabla\cdot b(t,\cdot)\|_{L^\infty}$ and $C(t) = \|c(t,\cdot)\|_{L^\infty}$.  Now we set $G(t) = \log \|\phi\|_{L^r}$ and $s = 1/r$ to obtain
\[
	\frac{dG}{dt} \leq \dot{s}\left(2 + \log(4\pi s(1-s))\right) - \dot{s}\log\left(-\dot{s}\right) + (1-s)B(t) + C(t)
.\]
Letting $s(t)$ be a linear interpolation of $1$ and $0$ over $[0,T]$, we see that $\dot{s} = -T^{-1}$.  Then we may integrate this to obtain
\[
	G(T) - G(0)
		\leq 4 - \log(4\pi) - \log(T) + \int_0^T \left[B(t) + C(t)\right] dt
.\]
Exponentiating gives us
\begin{equation}\label{preliminary_heat_bound}
	\|\phi(T)\|_{L^\infty}
		\leq \frac{K}{T} \exp\left( \int_0^T \left[B(t) + C(t)\right] dt\right)
.\end{equation}

In order to get pointwise decay from \eqref{preliminary_heat_bound}, we look at the operator
	\[P^{(\alpha)}(T,x,y) := e^{-\alpha \cdot x} P(T,x,y) e^{\alpha\cdot y},\]
where $P$ is the solution kernel for our linear problem \eqref{kernel_linear_problem} with $c\equiv 0$ and $\alpha(x,y)$ is a function to be identified later.  We assume that $b$ can be written as $b = b_1+b_2$ where
$\nabla\cdot b_1 = 0$
\begin{equation}\label{eqnB}
  \|b_1(t)\|_{L^\infty} \leq \frac{K_1}{\sqrt{t+1}},
  \quad
  \|\nabla\cdot b_2\|_{L^\infty} \leq \frac{K_2}{(t+1)^{3/2}},
  \quad\text{and}\quad
  \|b_2\|_{L^\infty} \leq \frac{K_2}{(t+1)}.
\end{equation}
In the application we have in mind, $b_1$ comes from the Biot-Savart kernel of the vorticity, while $b_2$ comes from $\gradinv$ of the divergence.

We wish to obtain bounds for $P$ through our integral bounds on $P^{(\alpha)}$.  To this end, we notice that $P^{(\alpha)}$ is the solution kernel for the problem
\[
	\phi_t = \Delta \phi + \nabla\cdot(( b + 2\alpha) \phi ) + (\alpha\cdot b + |\alpha|^2) \phi.
\]
Applying \eqref{preliminary_heat_bound}, and noticing that $\nabla\cdot(b + 2\alpha) = \nabla\cdot b$, we obtain, for any $\alpha$,
\[\begin{split}
	P^{(\alpha)}&(T,x,y) \leq\\
		 &\frac{K}{T} \exp\left( 2 \int_0^T \left[ K_2(t+1)^{-3/2} + K_2^2(t+1)^{-2} + |\alpha| K_1 (t+1)^{-1/2} + |\alpha|^2\right] dt\right).
\end{split}\]
Choosing
\[
	\alpha = - \frac{1}{4T}\frac{(x-y)}{|x-y|} \left[ |x-y| - 2 K_1\sqrt{T+1}\right]_+,
\]
using the definition of $P^{(\alpha)}$, and integrating in time, we obtain
\[
	P(T,x,y)
		\leq \frac{K}{T} \exp\left( (4K_2 + 2K_2^2)  - \frac{1}{8T} \left[|x-y| - 2K_1\sqrt{T+1} \right]_+^2 \right).
\]
By possibly changing the constants, we may obtain
\[
	P(T,x,y)
		\leq \frac{C}{T} \exp\left( - \frac{|x-y|^2}{CT}\right).
\]

To conclude, we apply the above to equation~\eqref{eqnENSOmega}, by choosing $b_1 = \BS \omega$ and $b_2 = \gradinv d$.
Lemmas~\ref{d_estimates} and~\ref{e_vorticity_decay} and interpolation inequalities of the form~\eqref{eqnGradInvDBd} show that~\eqref{eqnB} is satisfied, concluding the proof.
\end{proof}


\section{Relative compactness of complete trajectories}\label{sxnCompactness}

In this section we prove lemmas~\ref{lmaCompactness} and~\ref{lmaWCompact}, showing that complete trajectories in $L^1$ are relatively compact.
The development is similar to \cite{bblGallayWayne05}, and the main difference here is the additional divergence term which requires us to alter many of the proofs.
We first work up towards proving lemma~\ref{lmaWCompact}, and then use this to prove lemma~\ref{lmaCompactness}.

\subsection{The semi-group of \texorpdfstring{$\mathcal L$}{L} and apriori bounds.}

In order to obtain the desired compactness results, we will need estimates on various quantities. We will state these estimates here, but we will omit the proofs and provide references.

Let $S(\tau) \defeq \exp\paren{\tau \cL}$ be the semigroup generated by the operator $\cL$.  First we recall some estimates on the operator $S(\tau)$.  In order to state these, we define the function
\[a(\tau) \defeq 1 - e^{-\tau}.\]
This function appears naturally with the change of variables.  We recall a lemma on the operator $S$ from \cite{bblGallayWayne02}.
\begin{lemma}\cite{bblGallayWayne02}*{Appendix~A}\label{SemiGroupBds}
\begin{enumerate}
\item For $m>1$, $S(\tau)$ is a bounded operator on $L^2(m)$.  In addition, $\nabla S(\tau)$ is bounded away from $\tau = 0$.  More precisely, there is a universal constant $C$ such that
\[ \|S(\tau)\|_{L^2(m)\to L^2(m)} \leq C, \ \ \|\nabla S(\tau)\|_{L^2(m)\to L^2(m)} \leq \frac{C}{\sqrt{a(\tau)}}.\]
\item Let $L_0^2(m)$ be the space of $L^2(m)$ functions with integral zero.  For $\mu \in (0,1/2]$ and $m > 1 + 2\mu$ and $\tau > 0$, there is a universal constant $C$ such that
\[\|S(\tau)\|_{L^2_0(m)\to L^2_0(m)} \leq C e^{-\mu \tau}, \ \ \|\nabla S(\tau)\|_{L^2_0(m)\to L^2_0(m)} \leq  C\frac{e^{-\mu \tau}}{\sqrt{a(\tau)}}.\]
\item For $1 \leq q \leq p \leq \infty$, $T>0$, $m \in [0,\infty)$ and $\alpha \in \N^2$, there is a constant $C_T$, depending on $T$, such that
\[\|\partial^\alpha S(\tau) f\|_{L^p(m)}
	\leq \frac{C_T}{a(\tau)^{(q^{-1} - p^{-1})+ |\alpha|/2}} \|f\|_{L^q(m)},\]
for any $f \in L^q(m)$ and any $0 < \tau \leq T$.
\end{enumerate}
\end{lemma}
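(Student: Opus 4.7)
The plan is to exploit an explicit kernel representation for $S(\tau)$. The operator $\cL$ is precisely what appears when one applies the self-similar change of variables $\xi = x/\sqrt{t}$, $\tau = \log t$, $W(\xi,\tau) = t\,\omega(x,t)$ to the two-dimensional heat equation, so solving $\partial_\tau W = \cL W$ on $[0,\tau]$ with data $f$ is equivalent to solving the heat equation from time $t = 1$ to time $t = e^\tau$. Undoing the scaling yields
\begin{equation*}
  (S(\tau) f)(\xi)
    = \frac{1}{4 \pi a(\tau)}
      \int_{\R^2} \exp\paren[\Big]{ -\frac{|\xi - e^{-\tau/2} \eta|^2}{4 a(\tau)} } f(\eta) \, d\eta,
\end{equation*}
and I would derive and verify this formula first (checking, for instance, that $S(\tau) G = G$, consistent with $\cL G = 0$).

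With the kernel in hand, part (iii) is essentially a standard bound for convolution against a Gaussian of variance comparable to $a(\tau)$. For the unweighted case Young's inequality produces the $a(\tau)^{-(q^{-1}-p^{-1})}$ factor directly, and derivatives taken under the integral contribute one factor of order $a(\tau)^{-1/2}$ each. The polynomial weight $(1 + |\xi|^2)^{m/2}$ is incorporated by splitting the $\eta$-integration into a near-field region where the weights at $\xi$ and at $e^{\tau/2} \eta$ are comparable, and a far-field region where the Gaussian factor dominates any polynomial growth in $\eta$.

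Part (i) follows by specializing part (iii) to $p = q = 2$, for $\alpha = 0$ and $|\alpha| = 1$. The only subtlety is that the constant in part (iii) is allowed to depend on $T$, whereas here it must be uniform in $\tau \in [0, \infty)$. For $\tau$ in any bounded interval this is automatic; for $\tau$ large, the factor $e^{-\tau/2}$ in the kernel collapses the argument of $f$ toward the origin, and one can treat this regime by a separate estimate showing that the operator norm stays bounded (in fact, $S(\tau) f$ converges to $(\int f)\, G$ in $L^2(m)$, which is bounded).

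Part (ii) is the main obstacle, since it demands the improved exponential decay available only for mean-zero data. The cleanest route is via the spectral decomposition of the conjugate operator $L \defeq -G^{-1/2} \cL G^{1/2}$, which is the standard two-dimensional harmonic oscillator with spectrum $\set{n/2 : n \in \N}$ and a one-dimensional null space spanned by $G^{1/2}$; the gap to the second eigenvalue $1/2$ yields the decay rate $e^{-\mu\tau}$ for any $\mu < 1/2$, once one verifies that the null direction is orthogonal to $L^2_0(m)$ (this is automatic since $\int f = 0$) and transfers the bound from the $G\inv$-weighted space to $L^2(m)$. Alternatively, one can argue directly from the kernel formula above: when $\int f = 0$ the $\eta$-independent piece $e^{-|\xi|^2/(4 a(\tau))}$ may be subtracted inside the integral, and a Taylor expansion of the resulting bracket in powers of $e^{-\tau/2}\eta$ exposes an extra factor of $e^{-\tau/2}|\eta|$ per order, at the cost of one unit of polynomial weight. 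Iterating until the required rate $e^{-\mu\tau}$ is exhausted explains the hypothesis $m > 1 + 2\mu$. The gradient bound is then obtained by combining this decay with the $a(\tau)^{-1/2}$ factor from part (iii).
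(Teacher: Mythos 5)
Your overall strategy --- derive the explicit kernel representation
$(S(\tau)f)(\xi) = \tfrac{1}{4\pi a(\tau)}\int e^{-|\xi - e^{-\tau/2}\eta|^2/4a(\tau)} f(\eta)\,d\eta$
by undoing the self-similar change of variables, then read off (iii) from Young's inequality and a weight-splitting, and (i) from the large-$\tau$ convergence $S(\tau)f \to (\int f)\,G$ --- is exactly the route taken in the cited reference (Gallay--Wayne '02, Appendix A); the paper itself states this lemma without proof. Your kernel formula is correct (it does fix $G$), and your remark that (i) does not follow by merely setting $p=q=2$ in (iii) is well taken: the change of variables $\zeta = e^{-\tau/2}\eta$ in Young's inequality leaves a factor $e^{\tau(1-1/q)}$, which is why $C_T$ in (iii) degrades with $T$ and why a separate argument (or the decomposition $f = (\int f)G + f_0$ combined with part (ii) at small $\mu$) is needed for uniformity in $\tau$.

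The one step that would fail as written is the ``cleanest route'' you propose for part (ii). The conjugated harmonic oscillator $L = -G^{-1/2}\cL G^{1/2}$ and its spectral gap live on the Gaussian-weighted space $L^2_w$, and the inclusion goes the wrong way: $L^2_w \subset L^2(m)$, so a decay estimate on $L^2_w$ says nothing about data that is merely in $L^2(m)$. Indeed, on $L^2(m)$ the operator $\cL$ has essential spectrum filling the half-plane $\{\operatorname{Re} z \le (1-m)/2\}$ in addition to the isolated eigenvalues $0, -1/2, -1, \dots$; the hypothesis $m > 1 + 2\mu$ is precisely what pushes the essential spectrum below $-\mu$, after which removing the eigenvalue $0$ (by restricting to $L^2_0(m)$) yields the rate $e^{-\mu\tau}$. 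Your alternative, kernel-based argument --- subtracting the $\eta$-independent Gaussian and expanding in powers of $e^{-\tau/2}\eta$, trading one power of polynomial weight per factor of $e^{-\tau/2}$ --- is the one that actually works in $L^2(m)$ and correctly explains the constraint on $m$; an equivalent and slightly cleaner formulation is to write a mean-zero $f$ as $\nabla\cdot F$ with $F \in L^2(m-1)$ and use the commutation relation $S(\tau)\partial_i = e^{-\tau/2}\partial_i S(\tau)$ together with part (iii). You should promote that alternative to the main argument and drop the spectral ``transfer.''
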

\noindent We note that the commutator of $\nabla$ and $S(\tau)$ is computed as
\[\partial_i S(\tau) = e^{\tau/2} S(\tau) \partial_i.\]

In addition, we need the well-known bounds on Biot-Savart kernel and $\nabla^{-1}$.  The proof of this proposition may be found in~\cite{bblRodrigues09}*{proposition 1} and~\cite{bblGallayWayne02}*{Appendix B}.
\begin{proposition}\label{kernel_bounds}
Denote by $K$ either the operator $\BS$ or the operator $\gradinv$.  Then the following inequalities hold for any $f$ such that the right hand side of each inequality is finite.
\begin{enumerate}
\item If $1 < p < 2 < q$ and $1 + q^{-1} - p^{-1} = 1/2$ then there is a constant $C$ such that
\[ \|Kf\|_{L^q} \leq C \|f\|_{L^p}.\]
\item If $1 \leq p < 2 < q \leq \infty$ and $0 < \theta < 1$ satisfy
\[\frac{\theta}{p} + \frac{1-\theta}{q} = \frac{1}{2},\]
then there is a constant $C$ such that
\[\|Kf\|_{L^\infty} \leq C \|f\|_{L^p}^\theta \|f\|_{L^q}^{1-\theta}.\]
\item There exists a constant $C_p>0$ depending only on $p$ such that if $p>1$ then
\[\|\nabla Kf\|_{L^p} \leq C_p \|f\|_{L^p}.\]
\item If $0 < m < 1$ and $q > 2$ then there is a constant $C_q$, depending only on $q$, such that
\[ \|Kf\|_{L^q(m-2/q)}  \leq C_q \|f\|_{L^2(m)}.\]
\end{enumerate}
\end{proposition}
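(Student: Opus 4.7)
The two operators are convolutions against kernels of the form $c\,x^{\perp}/|x|^2$ and $c\,x/|x|^2$, both of which satisfy $|K(x)| \le C|x|^{-1}$. In particular, the pointwise bound $|Kf(x)| \le C(I_1|f|)(x)$ holds, where $I_1$ denotes the Riesz potential of order one in $\R^2$. Part (i) then follows at once from the Hardy--Littlewood--Sobolev inequality in two dimensions, whose sharp exponent range is precisely $1<p<2<q$ with $1 + 1/q - 1/p = 1/2$.

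For Part (ii) I would use the standard near/far decomposition
\[
  |Kf(x)| \le C\int_{|x-y|<R}\frac{|f(y)|}{|x-y|}\,dy + C\int_{|x-y|\ge R}\frac{|f(y)|}{|x-y|}\,dy.
\]
Applying H\"older with conjugate exponent $q'$ on the near piece yields $CR^{1-2/q}\norm{f}_{L^q}$, and the requirement $q>2$ is exactly what makes $|y|^{-q'}$ locally integrable. Applying H\"older with $p'$ on the far piece yields $CR^{1-2/p}\norm{f}_{L^p}$, and the requirement $p<2$ is what makes $|y|^{-p'}$ integrable at infinity. Optimizing in $R>0$ produces a product of the two norms whose exponents are precisely the $\theta$ and $1-\theta$ determined by $\theta/p+(1-\theta)/q=1/2$.

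Part (iii) is a direct application of the Calder\'on--Zygmund theorem. A short calculation shows that for $j=1,2$ the derivatives $\partial_j K_{BS}$ and $\partial_j\gradinv$ are convolutions with principal-value kernels of the form $|x|^{-2}\Omega(x/|x|)$, where $\Omega\in C^\infty(S^1)$ has vanishing mean on $S^1$. Such singular integrals are bounded on $L^p$ for every $1<p<\infty$.

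For Part (iv) I would again localize by splitting the convolution according to whether $|y|\le|\xi|/2$ or $|y|>|\xi|/2$. On the first region $|\xi-y|\gtrsim|\xi|$, so $|Kf(\xi)|\lesssim|\xi|^{-1}\norm{f}_{L^1}$, and since $L^2(m)\hookrightarrow L^1$ when $m>1$, multiplying by the weight $(1+|\xi|^2)^{(m-2/q)/2}$ and raising to the $q$-th power gives an integral that converges exactly when $q>2$ (for the decay at infinity) and $m<1$ (for integrability near the origin). On the second region, the weight $(1+|y|^2)^{m/2}$ dominates $(1+|\xi|^2)^{m/2}$ up to a constant, so the weighted output norm reduces to an unweighted $L^2\to L^q$ bound from Part (i) applied to $(1+|y|^2)^{m/2}f$. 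The only real obstacle is the bookkeeping of weights; no analytical input beyond Parts (i)--(iii) is needed.
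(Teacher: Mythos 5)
Your parts (i)--(iii) are correct and match the standard arguments behind this proposition, which the paper does not prove itself but attributes to \cite{bblRodrigues09}*{proposition 1} and \cite{bblGallayWayne02}*{Appendix B}: the pointwise domination by the Riesz potential $I_1$ plus Hardy--Littlewood--Sobolev for (i), the near/far H\"older optimization for (ii), and Calder\'on--Zygmund for $\nabla\otimes\nabla\Delta^{-1}$ in (iii).

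Part (iv), however, does not go through as written, for two reasons. First, on the region $\{|y|\le|\xi|/2\}$ you bound the contribution by $|\xi|^{-1}\|f\|_{L^1}$ and invoke $L^2(m)\hookrightarrow L^1(\R^2)$, but that embedding holds only for $m>1$, whereas the hypothesis here is $0<m<1$; in that range $L^2(m)\not\subset L^1$ (take $f(\xi)=(1+|\xi|^2)^{-1}\log^{-1}(2+|\xi|)$). Moreover $|\xi|^{-q}$ is not locally integrable in $\R^2$ for $q>2$, so even granting an $L^1$ bound the weighted integral you describe diverges at the origin; near $\xi=0$ one must instead exploit that the domain $\{|y|\le|\xi|/2\}$ is small, e.g.\ Cauchy--Schwarz gives $\int_{|y|\le|\xi|/2}|f|\,dy\le C|\xi|\,\|f\|_{L^2}$. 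Second, on the region $\{|y|>|\xi|/2\}$ you discard the factor $(1+|\xi|^2)^{-1/q}$ from the output weight and claim the estimate reduces to an unweighted $L^2\to L^q$ bound ``from Part (i)''. But part (i) requires $p<2$, and no bound $\|I_1 g\|_{L^q(\R^2)}\le C\|g\|_{L^2(\R^2)}$ exists for any $q\le\infty$: scaling forces $q=\infty$, which is the failing endpoint of HLS. The negative power $-2/q$ in the output weight is precisely what compensates for the failure of the $p=2$ endpoint, and the actual proofs in \cite{bblGallayWayne02}*{Appendix B} and \cite{bblRodrigues09} work directly with the full weighted kernel $(1+|\xi|^2)^{(m-2/q)/2}(1+|y|^2)^{-m/2}|\xi-y|^{-1}$ rather than reducing to the unweighted estimate. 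So part (iv) needs a genuinely different argument from the one you sketch.
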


Finally, we state an \textit{a priori} bound on solutions to the system~\sysWD.  The proof of this lemma is a straightforward adaptation of~\cite{bblGallayWayne05}*{lemma~2.1}.
\begin{lemma}\label{WeightedBound}
Suppose that $(W,D)$ solves the system~\sysWD in the space
\begin{equation*}
  C^0([0,T],L^2(m))\cap C^0((0,T],H^1(m))
\end{equation*}
with $W_0\in L^2(m)$ and $D_0\in L^2(m)$ as the initial conditions for $W$ and $D$ respectively.  Then there is a constant $C$ such that
\[
  \norm{W(\tau)}_{L^2(m)} + a(\tau)^{1/2}\|\nabla W(\tau)\|_{L^2(m)} \leq C.\]
\end{lemma}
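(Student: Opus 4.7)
The plan is to treat this as a standard mild-solution estimate for the nonlinear equation for $W$, using the existing $L^p$ bounds for $\omega$ and $d$ (lemma~\ref{e_vorticity_decay} and heat kernel estimates) to control the velocity, and then closing the estimate via Duhamel and the semigroup bounds from lemma~\ref{SemiGroupBds}.

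First I would write the mild formulation in self-similar coordinates. Since $D$ satisfies the linear equation~\eqref{eqnD}, we simply have $D(\tau) = S(\tau) D_0$, so by lemma~\ref{SemiGroupBds}(1) the $L^2(m)$-norm of $D(\tau)$ is bounded uniformly. For $W$ the Duhamel formula reads
\begin{equation*}
  W(\tau) = S(\tau) W_0 - \int_0^\tau S(\tau-s)\,\nabla\cdot\bigl(U(s) W(s)\bigr)\,ds,
\end{equation*}
and using the commutator $S(\tau)\partial_i = e^{-\tau/2}\partial_i S(\tau)$ one can move the divergence past the semigroup so that lemma~\ref{SemiGroupBds}(3) with $|\alpha|=1$ applies.

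Second, I would establish a time-uniform bound on $\|U(\tau)\|_{L^\infty}$. Using proposition~\ref{kernel_bounds}(2) one has
\begin{equation*}
  \|U(\tau)\|_{L^\infty} \leq C\bigl(\|W(\tau)\|_{L^{4/3}}^{1/2}\|W(\tau)\|_{L^4}^{1/2} + \|D(\tau)\|_{L^{4/3}}^{1/2}\|D(\tau)\|_{L^4}^{1/2}\bigr).
\end{equation*}
Under the change of variables $W(\xi,\tau) = t\,\omega(x,t)$ one has $\|W(\tau)\|_{L^p} = t^{1-1/p}\|\omega(t)\|_{L^p}$, so lemma~\ref{e_vorticity_decay} translates into a uniform-in-$\tau$ bound for $\|W(\tau)\|_{L^p}$ on any $p\in[1,\infty]$ (since $L^2(m)\subset L^1$ when $m>1$). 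An analogous estimate via the explicit heat kernel for~\eqref{eqnD} gives a uniform bound for $\|D(\tau)\|_{L^p}$. Together, $\|U(\tau)\|_{L^\infty}\leq C_U$ uniformly.

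Third, using Hölder's inequality $\|U W\|_{L^2(m)} \leq \|U\|_{L^\infty}\|W\|_{L^2(m)}$ and combining the two pieces above yields
\begin{equation*}
  \|W(\tau)\|_{L^2(m)} \leq C\|W_0\|_{L^2(m)} + C\,C_U \int_0^\tau \frac{e^{-(\tau-s)/2}}{\sqrt{a(\tau-s)}}\,\|W(s)\|_{L^2(m)}\,ds.
\end{equation*}
The kernel $e^{-(\tau-s)/2}/\sqrt{a(\tau-s)}$ is integrable with $L^1$-norm bounded uniformly in $\tau$, so a Gronwall-type / singular-kernel iteration argument gives a uniform bound $\|W(\tau)\|_{L^2(m)} \leq C$.

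Finally, for the gradient bound I would differentiate the Duhamel identity and split the integral at $s=\tau/2$. On $[0,\tau/2]$ use lemma~\ref{SemiGroupBds}(3) with $|\alpha|=2$ on the already-small nonlinear term, together with the uniform $L^2(m)$ bound just obtained; on $[\tau/2,\tau]$ apply the $|\alpha|=1$ estimate a second time, absorbing the $a(\tau-s)^{-1/2}$ singularity against the bound on $\|W(s)\|_{L^2(m)}$. The only new factor $a(\tau)^{-1/2}$ comes from the linear term $\nabla S(\tau)W_0$, matching the stated inequality. The main technical obstacle is the singular integral closure in step three — once the uniform $L^\infty$ bound on $U$ is in hand, everything else is parallel to \cite{bblGallayWayne05}*{lemma~2.1}.
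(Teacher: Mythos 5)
Your proposal follows essentially the same route the paper intends, since the paper defers this lemma to a ``straightforward adaptation'' of \cite{bblGallayWayne05}*{lemma~2.1}: Duhamel's formula for $W$, the semigroup bounds of lemma~\ref{SemiGroupBds}, control of the velocity through proposition~\ref{kernel_bounds} and the unscaled $L^p$ decay of the vorticity and divergence, and a singular Gronwall closure. The one point needing care is that $L^2(m)$ does not embed into $L^4$, so near $\tau=0$ you only get $\norm{U(\tau)}_{L^\infty}\leq C\,a(\tau)^{-1/2}$ rather than a uniform constant; this still closes your third step because $\int_0^\tau a(\tau-s)^{-1/2}a(s)^{-1/2}\,ds$ is bounded uniformly in $\tau$.
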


\subsection{Compactness in \texorpdfstring{$L^2(m)$}{L2(m)}.}

First we
show relative compactness of complete trajectories on $\R_+$ in $L^2(m)$.  This is accomplished by decomposing the remainder term into convenient functions, two of which decay to zero and one whose trajectory is relatively compact.

\begin{lemma}\label{WCompactForward}
Assume that $m>3$ and that $(W,D)\in C^0([0,\infty), L^2(m)^2)$ is a solution to the system~\sysWD, and is bounded in $L^2(m)$.  The trajectory $\{(W,D)\}_{\tau\in \R_{\geq 0}}$ is relatively compact in $L^2(m)$.
\end{lemma}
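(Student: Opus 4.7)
The plan is to combine Duhamel's formula with the smoothing bounds of lemma~\ref{SemiGroupBds} to show that, on $[T+1,\infty)$, the trajectory lies (up to an arbitrarily small $L^2(m)$ error) in a bounded subset of $H^1(m)$; a weighted Rellich--Kondrachov argument then furnishes relative compactness. The portion of the trajectory on $[0,T+1]$ is trivially compact by continuity of $(W,D)$ in $L^2(m)^2$. The treatment of $D$ is direct since its equation is linear, and the only real work is for $W$.

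For $D$: since $D(\tau) = S(\tau) D(0)$, part~(1) of lemma~\ref{SemiGroupBds} gives $\|\nabla D(\tau)\|_{L^2(m)} \leq C$ uniformly on $\tau \geq 1$, while a standard weighted energy estimate (multiplying~\eqref{eqnD} by $\chi_R(\xi)(1+|\xi|^2)^m D$ with $\chi_R$ a cutoff to $|\xi| > R$ and exploiting the dissipative structure of $\cL$) gives a uniform-in-$\tau$ tail bound $\int_{|\xi|>R}(1+|\xi|^2)^m |D|^2 \to 0$ as $R \to \infty$. These two ingredients, together with Rellich--Kondrachov on bounded sets, yield relative compactness of $\{D(\tau)\}$ in $L^2(m)$. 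For $W$, fix $T > 0$ and, for $\tau > T$, use Duhamel on $[T,\tau]$ together with $\cL G = 0$ to write
\begin{equation*}
  W(\tau) - \alpha G = S(\tau - T)\bigl(W(T) - \alpha G\bigr) - \int_T^\tau S(\tau-s)\, \nabla\cdot \bigl(U(s) W(s)\bigr)\, ds.
\end{equation*}
Since $\int(W(T) - \alpha G)\,d\xi = 0$ by conservation of mass and $m > 3 > 1 + 2\mu$ for some $\mu \in (1/4, 1/2]$, part~(2) of lemma~\ref{SemiGroupBds} shows the first term on the right decays like $e^{-\mu(\tau-T)}$ in $L^2(m)$, hence is arbitrarily small for $T$ large.

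It remains to bound the nonlinear integral uniformly in $H^1(m)$. The commutator identity $S(\tau-s)\nabla\cdot = e^{-(\tau-s)/2}\nabla\cdot S(\tau-s)$ combined with splitting the integral at $s = \tau - 1$ is the key device: on the ``far'' piece $s \in [T,\tau-1]$, where $\tau-s \geq 1$, an additional gradient is absorbed by uniform smoothing (part~(1) of lemma~\ref{SemiGroupBds}), while on the ``near'' piece $s \in [\tau-1,\tau]$ the singular factor $a(\tau-s)^{-1/2}$ is integrable. The $L^2(m)$ norm of $U W = (\BS W + \gradinv D) W$ is controlled by the $L^2(m)$ norms of $W$ and $D$ via the $L^\infty$ bounds on $\BS W$ and $\gradinv D$ obtained by interpolation in proposition~\ref{kernel_bounds}. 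The main obstacle compared with~\cite{bblGallayWayne05} is precisely the extra contribution $\gradinv D$ to the velocity, but the uniform $L^2(m)$ bound on $D$ renders this a straightforward adaptation. Combined with a uniform $L^2(m)$ tail bound on $W$ (obtained as for $D$, treating $\nabla\cdot(UW)$ as a perturbation controlled by the $L^\infty$ bounds on $U$ and the $H^1(m)$ bound on $W$ for $\tau \geq 1$ from lemma~\ref{WeightedBound}), a weighted Rellich--Kondrachov argument completes the proof.
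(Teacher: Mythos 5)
Your skeleton---Duhamel from a late time $T$, decay of $S(\tau-T)(W(T)-\alpha G)$ via part~(2) of lemma~\ref{SemiGroupBds} and conservation of mass, smoothing of the nonlinear integral via the commutator $S(\tau)\nabla\cdot = e^{-\tau/2}\nabla\cdot S(\tau)$, and a weighted Rellich argument---is the same as the paper's. Where you differ is in the bookkeeping: the paper writes $R=W-\alpha G = S(\tau)R_0 - R_1 - R_2$, isolates the new term $R_2=\int_0^\tau S(\tau-s)\nabla\cdot(W\gradinv D)\,ds$ and shows it tends to zero in $L^2(m)$ (using that $\norm{\gradinv D}_{L^\infty}\to 0$), while importing from \cite{bblGallayWayne05}*{lemma~2.2} that $R_1$ is bounded in $L^2(m+1)$---and it is this \emph{stronger weight}, not the $H^1(m)$ bound alone, that supplies tightness at infinity. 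You instead keep $\gradinv D$ inside one Duhamel integral and propose to recover tightness by a cutoff weighted energy estimate; that is a legitimate alternative (the drift $\tfrac12\xi\cdot\nabla$ in $\cL$ is confining for $m>1$), but it is the load-bearing step of your scheme and you only assert it---in particular you must absorb the zeroth-order contribution $\tfrac12\int\chi_R(1+\abs{\xi}^2)^m DW^2$ using decay of $\norm{D}_{L^\infty}$.

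One step, as literally written, does not go through: your $H^1(m)$ bound on the near piece $s\in[\tau-1,\tau]$. Putting a gradient on $S(\tau-s)\nabla\cdot(UW)$ costs $a(\tau-s)^{-1}$, which is \emph{not} integrable near $s=\tau$; the factor $a(\tau-s)^{-1/2}$ you cite only covers the $L^2(m)$ estimate of the integral, not its gradient. The standard repair is to expand $\nabla\cdot(UW)=U\cdot\nabla W+DW$ and pay for the extra derivative with the uniform $H^1(m)$ bound of lemma~\ref{WeightedBound} (or to smooth from $L^q(m)$ with $q<2$ using part~(3) of lemma~\ref{SemiGroupBds}); this is precisely what the paper's citation of Gallay--Wayne covers. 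With that fix and the tail estimate carried out, your proof closes.
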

\begin{proof}
We work here with $W$ only, but the proof for $D$ is similar and simpler.  We define the remainder, $R$, to be such that $W = \alpha G + R$.  One can check that
\begin{equation*}
  \partial_\tau R
    =
      \cL R - \alpha \Lambda R - N(R) - \nabla\cdot(W \gradinv D).
\end{equation*}
where
\begin{equation*}
  \alpha \Lambda R \defeq \left(\alpha \BS G \cdot \nabla R + \alpha \BS R \cdot \nabla G\right)
  \quad\text{and}\quad
  N(R) \defeq \BS R\cdot \nabla R.
\end{equation*}
Hence we may write
\begin{equation}\label{e_duhamel}
  R(\tau, \xi) = S(\tau)R_0 - R_1 - R_2
\end{equation}
where
\begin{align*}
  R_1 &\defeq \int_0^\tau S(\tau-s) (\alpha \Lambda R(s) + N(R)(s) ) \, ds\\
  \text{and}\quad
  R_2 &\defeq \int_0^\tau S(\tau-s) \nabla\cdot(W(s) \gradinv D(s)) \, ds.
\end{align*}
The first term tends to zero by part two of lemma \ref{SemiGroupBds} and the fact that $\int R_0 d\xi = 0$. 
It follows from the work in lemma~2.2 in \cite{bblGallayWayne05} that $R_1$ is bounded in $L^2(m+1)$, and, hence, is a relatively compact trajectory.  Thus, we need only show that $R_2$ tends to zero.

To this end, we use the first inequality in lemma~\ref{SemiGroupBds} to obtain
\[\begin{split}
\|R_2\|_{L^2(m)}
	&\leq \int_0^\tau e^{-\frac{1}{2}(\tau-s)} \|\nabla S(\tau-s) (W\gradinv D)(s)\|_{L^2(m)} ds\\
	&\leq C\int_0^\tau \frac{e^{-\frac{1}{2}(\tau-s)}}{\sqrt{a(\tau-s)}} \|(W\gradinv D)(s)\|_{L^2(m)} ds\\
	&\leq C\int_0^\tau \frac{e^{-\frac{1}{2}(\tau-s)}}{\sqrt{a(\tau-s)}} \|\gradinv D(s)\|_{L^\infty} \|W(s)\|_{L^2(m)} ds
.\end{split}\]
The results of lemma~\ref{d_estimates} and proposition~\ref{kernel_bounds} imply that $\|\gradinv D(s)\|_{L^\infty}$ tends to zero as $s$ tends to infinity.  Hence, we see that $\|R_2\|_{L^2(m)}$ tends to zero as $\tau$ tends to infinity.
\end{proof}

Now we will show relative compactness of complete trajectories in $L^2(m)$, i.e. we will prove lemma~\ref{lmaWCompact}.  Our method of proof will be similar to above.
%

\begin{proof}[Proof of lemma~\ref{lmaWCompact}]
Again we will look at $R$ as above and only work with $W$.  This time we will decompose $R$ as
\[\begin{split}
	R(\tau) &= S(\tau - \tau_0) R(\tau_0) - \int_{\tau_0}^\tau S(\tau - s)\left( \alpha \Lambda R(s) + N(R)(s)\right) ds\\
	&~~~ - \int_{\tau_0}^\tau S(\tau - s) \nabla\cdot(W(s) \gradinv D(s)) ds
,\end{split}\]
where $\tau_0<\tau$.  Since $R \in L^2_0(m)$, by construction, it follows from lemma~\ref{SemiGroupBds} that $S(\tau - \tau_0)R(\tau_0)$ tends to zero as $\tau_0$ tends to negative infinity. 
Hence we may write
\begin{equation*}
  R(\tau) = -R_1 - R_2,
\end{equation*}
where
\begin{align*}
  R_1 \defeq \int_{-\infty}^\tau S(\tau - s)\left( \alpha \Lambda R(s) + N(R)(s)\right) \, ds \\
  \text{and}\quad
  R_2 \defeq \int_{-\infty}^\tau S(\tau - s) \nabla\cdot(W(s) \gradinv D(s)) \, ds.
\end{align*}
As before, showing that $R_1$ is relatively compact is exactly as in \cite{bblGallayWayne05}.  Thus, we need only investigate $R_2$, which we handle similarly to the previous lemma.

We will show that $R_2$ is bounded in $L^2(m+r)$ for some $r>0$.  For any $q \in (1,2)$, lemma \ref{SemiGroupBds} gives us
\[
	\|R_2\|_{L^2(m+r)} \leq C \int_{-\infty}^\tau \frac{e^{-\frac{1}{2}(\tau-s)}}{a(\tau-s)^{1/q}} \|W \gradinv D\|_{L^q(m+r)} ds.
\]
H\"older's inequality implies that 
\[
	\|W \gradinv D\|_{L^q(m)} \leq \|W\|_{L^2(m)} \|\gradinv D\|_{L^{2q/(2-q)}(r)}.
\]
The first term is bounded due to the assumptions in the statement of the current lemma.  For the remaining term concerning the divergence $D$, we apply proposition~\ref{kernel_bounds} to see that, letting $\tilde m = r + (2-q)/q$, and choosing $r$ and $q$ such that $\tilde m \leq m$, 
\[
	\|\gradinv D\|_{L^{2q/(2-q)}(r)} \leq C \|D\|_{L^2(\tilde m)} \leq C \|D\|_{L^2(m)}.
\]

Hence $R_2$ is bounded in $L^2(m+r)$.  Lemma~\ref{SemiGroupBds} and lemma~\ref{WeightedBound} imply that $R_2$ is also bounded in $H^1(m)$, 
so that Rellich's theorem, see e.g.~\cite{bblReedSimon78}*{theorem~XIII.65} 
 implies that $R_2$ is relatively compact in $L^2(m)$, finishing the proof.
\end{proof}

In order to conclude, we need that bounded trajectories in $L^1$ are relatively compact.  In order to show this, one may reproduce the proof of~\cite{bblGallayWayne05}*{lemma~2.5} as it relies only on a pointwise estimate on $W$, which we recreate in~\eqref{eqnWbound}.  This yields the final lemma we need to prove the necessary compactness.

\subsection{Convergence in \texorpdfstring{$L^\infty$}{L-infty}}

In this section we prove convergence of $W$ to $\alpha G$ in $L^\infty$, as stated in theorem~\ref{thmBetaZero}.

\begin{lemma}\label{lmaLinf}
Let $(W, D)$ solve the system~\sysWD with initial data $(W_0, D_0)$ such that $W_0, D_0 \in L^1(1)$.
If $\alpha = \int W_0 \, d\xi$ and $\beta = \int D_0 \, d\xi = 0$, then
\begin{equation*}
	\lim_{\tau\to\infty} \left\| W - \alpha G\right\|_{L^\infty} = 0.
\end{equation*}
\end{lemma}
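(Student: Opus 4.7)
The plan is to exploit parabolic regularization: since $\|W(\tau) - \alpha G\|_{L^p} \to 0$ is already known for every $p \in [1,\infty)$, one can bootstrap to $L^\infty$ by running the equation for one unit of rescaled time. Set $P \defeq W - \alpha G$. A direct computation using $\nabla G = -\tfrac{1}{2}\xi G$ shows that $\cL G = 0$, so the perturbation satisfies
\[
  \partial_\tau P = \cL P - \dv(UW), \qquad U = \BS W + \gradinv D,
\]
and Duhamel's formula on $[\tau-1,\tau]$ gives
\[
  P(\tau) = S(1)\,P(\tau-1) - \int_{\tau-1}^{\tau} S(\tau-s)\,\dv(UW)(s)\,ds.
\]

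For the linear piece, part~3 of lemma~\ref{SemiGroupBds} (with $m = 0$, some finite $q$, target $L^\infty$) gives a bounded map $S(1):L^p \to L^\infty$, so $\|S(1) P(\tau-1)\|_{L^\infty} \leq C\|P(\tau-1)\|_{L^p} \to 0$ by the $L^p$ convergence already established in the proof of proposition~\ref{ppnMeanZero}.

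For the nonlinear term, the key observation is that the Oseen vortex itself is a steady state of the 2D Navier-Stokes nonlinearity: since $\BS G$ is tangential to circles and divergence-free, $\dv(\alpha^2 (\BS G)\, G) = 0$. Thus we may replace $UW$ in the Duhamel integral by $F \defeq UW - \alpha^2 (\BS G)\, G$, and a direct expansion yields
\[
  F = \alpha(\BS P)\,G + \alpha(\BS G)\,P + (\BS P)\,P + \alpha(\gradinv D)\,G + (\gradinv D)\,P.
\]
Proposition~\ref{kernel_bounds} bounds $\|\BS P\|_{L^\infty}$ and $\|\gradinv D\|_{L^\infty}$ by interpolations of $L^r$-norms of $P$ and $D$ for suitable $r \in (1,\infty)$, so the $L^p$ convergence of $P$ together with lemma~\ref{d_estimates} forces $\|\BS P\|_{L^\infty},\|\gradinv D\|_{L^\infty} \to 0$, and hence $\|F(s)\|_{L^p} \to 0$ for every $p \in [1,\infty)$.

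Using the commutator $S(t)\partial_i = e^{-t/2}\partial_i S(t)$ and part~3 of lemma~\ref{SemiGroupBds} once more,
\[
  \|S(\tau-s)\,\dv F(s)\|_{L^\infty} \leq \frac{C\,e^{-(\tau-s)/2}}{a(\tau-s)^{1/p + 1/2}}\,\|F(s)\|_{L^p}.
\]
Choosing any $p > 2$ renders the kernel $a(r)^{-1/p - 1/2}$ integrable on $(0,1]$, and the uniform $L^p$ bound on $F$ coming from lemma~\ref{e_vorticity_decay} lets dominated convergence conclude that the nonlinear term also vanishes in $L^\infty$ as $\tau \to \infty$. The only mildly delicate point is this choice of $p$: it must be large enough to tame the parabolic singularity at $s=\tau$ but finite so that the previously proved $L^p$ decay can be invoked. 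Any $p \in (2,\infty)$ works, and no ingredient beyond what is already assembled in sections~\ref{sxnInequalities} and~\ref{sxnCompactness} is needed.
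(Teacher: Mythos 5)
Your proof is correct and follows essentially the same route as the paper's: the same unit-time Duhamel formula for $P = W - \alpha G$, the same decomposition of the flux (your $F$ has divergence exactly equal to the paper's $\alpha \Lambda R + N(R) + \dv(W\,\gradinv D)$), and the same appeals to lemma~\ref{SemiGroupBds}, proposition~\ref{kernel_bounds}, and lemma~\ref{d_estimates}. If anything, your choice to measure $F$ in $L^p$ with $p>2$, giving the integrable singularity $a(\tau-s)^{-1/p-1/2}$, treats the smoothing estimate more carefully than the paper's displayed $L^1$-based bounds.
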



\begin{proof}
Recall that we have shown that $W$ converges to $\alpha G$ in $L^p$ for all $p\in[1,\infty)$.  As in~\eqref{e_duhamel}, letting $R = W-\alpha G$, we may write an integral equation for $R$ using the semigroup $S$.  We will use this to show that $\|R\|_{L^\infty}$ tends to zero.  As above, $R$ satisfies
\[\begin{split}
	R(\tau)
		&= S(1)R(\tau-1) - \int_{\tau-1}^\tau S(\tau-s)\left(\alpha \Lambda R(s) + N(R)(s)\right)ds\\
		& - \int_{\tau-1}^\tau S(\tau-s) \nabla\cdot( W(s)\nabla^{-1}D(s))ds.
\end{split}\]
First, we use the third conclusion of lemma~\ref{SemiGroupBds} with $p = \infty$, $q =1$, $\alpha = 0$ and $m = 0$ on the first term.  Hence, we have that
\[
	\|S(1)R(\tau-1)\|_{L^\infty}
		\leq C \|R(\tau-1)\|_{L^1}.
\]
Since $\|R(\tau-1)\|_{L^1}$ tends to zero, then $\|S(1)R(\tau-1)\|_{L^\infty}$ tends to zero.  We may use this same strategy to deal with the rest of the terms.

First we look at
\[
	\Lambda R = (\BS G) \cdot R + (\BS R)\cdot \nabla G
		=\nabla \cdot( (\BS G)  R + (\BS R) G).
\]
Then lemma~\ref{SemiGroupBds}, implies that,
\[\begin{split}
	\norm[\Big]{\int_{\tau-1}^\tau S(\tau-s) \Lambda R(s) ds}_{L^\infty}
		&\leq \int_{\tau-1}^\tau (\|(\BS G) R\|_{L^1} + \|(\BS R) G\|_{L^1}) ds\\
		&\leq C \int_{\tau-1}^\tau ( \|R\|_{L^1} + \|\BS R\|_{L^\infty})ds.
\end{split}\]
Since $R$ tends to zero in $L^p$ for all $p$, then lemma~\ref{kernel_bounds} implies that $\BS R$ tends to zero in $L^\infty$.

Next, we deal with the term involving $N(R)$.  Notice that $N(R) = \nabla\cdot((\BS R) R)$.  Hence, as above, we obtain
\[\begin{split}
	\norm[\Big]{\int_{\tau-1}^\tau S(\tau-s) N(R)(s) ds}_{L^\infty}
		&\leq \int_{\tau-1}^\tau \|(\BS R) R\|_{L^1} ds\\
		&\leq C \int_{\tau-1}^\tau \|\BS R\|_{L^\infty} \|R\|_{L^1} ds.
\end{split}\]
Hence, this term tends to zero as well.

Finally, for the last term, we obtain
\[\begin{split}
	\norm[\Big]{\int_{\tau-1}^\tau S(\tau-s) \nabla\cdot(W\nabla^{-1}D)(s) ds}_{L^\infty}
		&\leq \int_{\tau-1}^\tau \|W \nabla^{-1}D\|_{L^1} ds\\
		&\leq C \int_{\tau-1}^\tau \|\nabla^{-1}D\|_{L^\infty} \|W\|_{L^1} ds.
\end{split}\]
Using lemma~\ref{d_estimates} and lemma~\ref{kernel_bounds}, we see that $\|\nabla^{-1}D\|_{L^\infty}$ tends to zero.  This finishes the proof that $\|R\|_{L^\infty}$ tends to zero.
\end{proof}

\section{Brief Remarks on Well-posedness}\label{sxnWellPosed}

The well-posedness of the system~\sysOmegaD in classical or Lebesgue spaces is very similar to the development in~\cites{bblBen-Artzi94, bblBrezis94, bblKato94}.  For the weighted spaces, one may look to the strategies of~\cites{bblGallayWayne02, bblRodrigues09}.  Since the adaptations required in our setting are minimal, we only briefly comment on the manner of proof.  First, we discuss the primary a priori estimates in each of these spaces.  Then, we discuss the iterative scheme used to prove local existence.

\subsection*{A Priori Estimates}
The main a priori estimates in $L^p$ and in $L^2_w$ follow as in the work of lemma~\ref{e_vorticity_decay} and section~\ref{sxnNonMeanZero}, respectively.  The a priori estimate in $L^2(m)^2$  is a slight modification of the argument of \cites{bblGallayWayne02}.  To this end, multiply~\eqref{eqnW} by $|\xi|^{2m}W$ to obtain
\[\begin{split}
	\frac{1}{2}\frac{d}{d\tau} &\int |\xi|^{2m}W^2 d\xi + \int |\xi|^{2m} W \nabla\cdot(UW)d\xi\\
		& = \int |\xi|^{2m}\left\{W\Delta W + \frac{W}{2}(\xi\cdot\nabla)W + W^2\right\}d\xi.
\end{split}\]
Integrating by parts, we see that these terms can be rewritten as
\[\begin{split}
&\int |\xi|^{2m} W(\Delta W)d\xi = -\int |\xi|^{2m}|\nabla W|^2 d\xi + 2m(m-1)\int |\xi|^{2m-2}W^2 d\xi ,\\
&\int |\xi|^{2m}\frac{W}{2}(\xi\cdot\nabla )W d\xi = -\frac{m+1}{2}\int |\xi|^{2m} W^2d\xi,\\
&\int |\xi|^{2m} W\nabla \cdot(UW)d\xi = \frac{1}{2}\int |\xi|^{2m}DW^2 d\xi + \frac{1}{2}\int |\xi|^{2m}\nabla\cdot (UW^2)d\xi\\
&\quad\quad= \frac{1}{2}\int |\xi|^{2m}DW^2d\xi - m\int |\xi|^{2m-2}(\xi\cdot U)W^2d\xi.
\end{split}\]
By noting that for any $\epsilon>0$ there is a $C_\epsilon>0$ so that $|\xi|^{2m-2}\leq \epsilon|\xi|^{2m} + C_\epsilon$, we see that
\[\begin{split}
\frac{1}{2}\frac{d}{d\tau}\int |\xi|^{2m} W^2d\xi
	&+ \int |\xi|^{2m} |\nabla W|^2 d\xi
	+ \frac{m-1 - 4\epsilon}{2} \int |\xi|^{2m} W^2 d\xi\\
	&\leq C_\epsilon \int W^2 d\xi 
		+ C_\epsilon \|U\|_\infty^{2m} \int W^2 d\xi
		+ \frac{\|D\|_\infty}{2} \int |\xi|^{2m} W^2 d\xi
.\end{split}\]
We know that $\|D\|_{L^\infty}$ decays to zero, and there is sufficient control over $\|W\|_{L^2}$ and $\|U\|_{L^\infty}$ by lemma~\ref{e_vorticity_decay} and proposition~\ref{kernel_bounds}.  Hence choosing $\epsilon>0$ sufficiently small and integrating the above inequality yields the apriori estimate required in $L^2(m)^2$.  These a priori estimates are summarized in the following proposition.
\begin{proposition}
Fix $(W_0, D_0)\in X$ where $X$ is either $L^2(m)^2$, with $m>1$ and $\int D_0 d\xi = 0$, or $(L^2_w)^2$.  Then there exists a unique solution to the system~\sysWD which satisfies
\[
	\|W(\tau)\|_{X} \leq C
.\]
Here $C$ is a constant depending only on the initial data and which tends to zero as $\|W_0\|_{X}$ tends to zero.
\end{proposition}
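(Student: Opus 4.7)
The plan is to establish the a priori bound separately in each of the two weighted spaces by upgrading the energy calculations that were already carried out in section~\ref{sxnNonMeanZero} and in the paragraphs immediately preceding the proposition, and then to recover uniqueness and the smallness of $C$ by standard arguments.

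For the case $X = (L^2_w)^2$, I would mimic the energy estimates used in the proof of theorem~\ref{thmBetaNonZero}. Multiplying equation~\eqref{eqnD} by $G^{-1}D$ and applying the coercivity estimate~\eqref{eqnLSpectral} reproduces the estimate~\eqref{eqnDbound} for $D$. For $W$, multiplying equation~\eqref{eqnW} by $G^{-1}W$ produces a transport term $\int G^{-1}W \dv(UW)\,d\xi$, which I would split via~\eqref{eqnU} into contributions from $\BS W$ and $\gradinv D$. The Biot--Savart piece is handled exactly as in the derivation of~\eqref{eqnWpInt1}--\eqref{eqnWpInt2} by combining~\eqref{eqnBSbound} with the coercive terms coming from $\cL$. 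The divergence piece is controlled using the $L^\infty$ bound~\eqref{eqnGradInvDBd} on $\gradinv D$, together with the already established decay of $\|D\|_w$. Gronwall's inequality then yields the uniform bound on $\|W(\tau)\|_w$.

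For the case $X = L^2(m)^2$ with $m>1$ and $\int D_0\,d\xi = 0$, I would start from the differential inequality derived just above the statement of the proposition. The main obstacle is bounding the inhomogeneous terms $\|U\|_{L^\infty}^{2m}\|W\|_{L^2}^2$ and $\|D\|_{L^\infty}\|\,\abs{\xi}^m W\|_{L^2}^2$. Lemma~\ref{d_estimates} together with the mean-zero hypothesis on $D_0$ gives the exponential decay $\|D(\tau)\|_{L^\infty}\leq C e^{-\tau/2}\|D_0\|_{L^1(1)}$, so this term contributes an integrable-in-$\tau$ perturbation; meanwhile lemma~\ref{e_vorticity_decay}, translated to self-similar coordinates, gives a uniform bound on $\|W(\tau)\|_{L^2}$, and proposition~\ref{kernel_bounds} then controls $\|U(\tau)\|_{L^\infty}$ by interpolation of $\|W\|_{L^p}$ norms. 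Feeding these estimates into the differential inequality and applying Gronwall yields the uniform bound on $\|\,\abs{\xi}^m W(\tau)\|_{L^2}$.

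Uniqueness in either space is obtained by subtracting two solutions with the same initial data and running the identical energy estimate on the difference $(\tilde W, \tilde D)$, which satisfies a perturbative system of the form~\eqref{eqnPerturbative}; Gronwall with zero initial data forces $\tilde W = \tilde D = 0$. Finally, by tracking the dependence on the initial data through each step of the above computations---in particular by noting that every right-hand side contains at least one factor of $\|W\|_X$---the constant $C$ is seen to vanish as $\|W_0\|_X\to 0$, which completes the proof.
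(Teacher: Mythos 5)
Your plan follows the paper's own sketch essentially verbatim: the $L^2(m)$ bound comes from the weighted energy inequality displayed just before the proposition (with lemma~\ref{d_estimates}, lemma~\ref{e_vorticity_decay} and proposition~\ref{kernel_bounds} controlling the inhomogeneous terms), the $L^2_w$ bound from the Gaussian-weighted energy estimates of section~\ref{sxnNonMeanZero}, and uniqueness from a Gronwall estimate on the difference of two solutions. The one caveat is that the computations \eqref{eqnWpInt1}--\eqref{eqnWpInt2} you cite are perturbative and close only under the smallness hypotheses of lemma~\ref{lmaExistence}; for the full, non-small $W$ the transport term should instead be absorbed using the uniform bound $\norm{\BS W}_{L^\infty}\leq C\paren{\norm{W_0}_{L^1},\norm{D_0}_{L^1}}$ furnished by lemma~\ref{e_vorticity_decay} and proposition~\ref{kernel_bounds}, exactly as you already do in the $L^2(m)$ case.
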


\subsection*{An Iterative Scheme}

To prove existence and uniqueness of classical solutions with initial data in $L^1$ we follow~\cite{bblBen-Artzi94}.  For existence, we begin with smooth initial data,  and use an iterative argument to obtain the existence of solutions which are bounded in $L^p$ for every $p$.  The key contribution here is that we iterate only in the vorticity, leaving the divergence fixed as solutions to the heat equation follow from the classical theory.  We define $\omega_0=0$ and then let $\omega_k$ be the solution to the linear system
\[\begin{split}
	\del_t \omega_k + \nabla\cdot(u_{k-1} \omega_k) &= \Delta \omega_k\\
	u_k &= \gradinv d + \BS \omega_k.
\end{split}\]
Bounds similar to lemma~\ref{e_vorticity_decay} can be obtained for this system, establishing the existence of a solution.  Uniqueness follows by directly estimating the difference of two solutions.  Afterwards, a continuity argument is used to extend this to any initial data in $L^1$. 

In general, this argument differs from that in~\cite{bblBen-Artzi94} only in the appearance of an extra term involving $d$ in several of the estimates.  However, this extra term behaves much better than the non-linear term as the classical theory on the heat equation for $d$ yields appropriate bounds on the divergence in any of the required spaces.  In particular, this gives us the following result which we state without proof.

\begin{proposition}
Suppose that $\omega_0$ and $d_0$ are elements of $L^1(\R^2)$.  Then there exist $\omega, d \in C(\R_+, L^1) \cap C(\R_+, W^{1,1}\cap W^{1,\infty})$, where $\R_+ := (0,\infty)$, which are the unique solutions to the system~\sysOmegaD.
\end{proposition}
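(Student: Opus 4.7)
The plan is to decouple the system as much as possible by first solving for $d$ and then treating the $\omega$-equation as a linear transport-diffusion equation with a prescribed drift that depends on both $\omega$ itself and on the (already known) divergence $d$. Since~\eqref{eqnENSd} is simply the heat equation, classical theory immediately gives a unique solution $d\in C(\R_+, L^1\cap L^\infty)$ with $\nabla d\in C(\R_+, L^1\cap L^\infty)$ together with the standard short-time blow up bounds $\|d(t)\|_{L^p}\leq C t^{-(1-1/p)}\|d_0\|_{L^1}$. Via the kernel bounds in proposition~\ref{kernel_bounds}, this in turn gives control of $\gradinv d$ in $L^\infty$ and hence controls the velocity contribution from the divergence uniformly in short time.

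With $d$ fixed, I would first handle smooth, compactly supported initial data $\omega_0$ and then pass to the limit. Following~\cite{bblBen-Artzi94}, set $\omega^{(0)}\equiv 0$ and iteratively define $\omega^{(k)}$ as the unique classical solution of the linear equation
\begin{equation*}
  \delt \omega^{(k)} + \dv(u^{(k-1)} \omega^{(k)}) = \lap \omega^{(k)},
  \qquad
  u^{(k-1)} = \BS \omega^{(k-1)} + \gradinv d.
\end{equation*}
The key step is to establish iteration-uniform bounds. The argument of lemma~\ref{e_vorticity_decay} goes through for each $\omega^{(k)}$ essentially unchanged: the decomposition into positive and negative parts and the maximum principle give the $L^1$ bound, while the $L^2$--$L^\infty$ bootstrap via the log-Sobolev or Fourier splitting in the same lemma gives the decay rates $\|\omega^{(k)}(t)\|_{L^p}\leq C t^{-(1-1/p)}$ and $\|\nabla \omega^{(k)}\|_{L^p}\leq C t^{-(3/2-1/p)}$, with a constant $C$ depending only on $\|\omega_0\|_{L^1}$ and $\|d_0\|_{L^1}$. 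The extra drift $\gradinv d$ enters only through the coefficient $\|d\|_{L^\infty}$ appearing in the $L^2$ energy estimate~\eqref{e_diff_inequality}, and since this is integrable in time near $t=0$ on finite intervals (after a standard cutoff), it contributes only a harmless multiplicative factor to the iterates.

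Uniqueness and convergence of the iterates then follow by estimating $\omega^{(k+1)}-\omega^{(k)}$ in $L^1$: by the kernel bounds in proposition~\ref{kernel_bounds} and the uniform $L^p$ controls above, one obtains an estimate
\begin{equation*}
  \|\omega^{(k+1)}(t)-\omega^{(k)}(t)\|_{L^1}
    \leq C\int_0^t (t-s)^{-1/2}\|\omega^{(k)}(s)-\omega^{(k-1)}(s)\|_{L^1}\,ds,
\end{equation*}
which, on a sufficiently short time interval, gives a contraction and convergence to a limit $\omega$ in $C([0,T],L^1)$. Uniqueness in the same class follows from the same estimate applied to the difference of two putative solutions.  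The apriori bounds are independent of the initial time, so iterating the argument extends the solution to all of $\R_+$, and the smoothing estimate~\eqref{vort_grad_decay} promotes $\omega\in C(\R_+,W^{1,1}\cap W^{1,\infty})$.

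Finally, to go from smooth compactly supported $\omega_0$ to arbitrary $\omega_0\in L^1$, one approximates $\omega_0$ by a sequence $\omega_0^n$ in $L^1$ and uses the contraction estimate above (which only uses $L^1$ norms of the initial data) to show that the corresponding solutions form a Cauchy sequence in $C([\tau,T],L^1)$ for any $\tau>0$; the same argument in the weighted spaces of section~\ref{sxnCompactness} handles the $W^{1,p}$ regularity for $t>0$. The main obstacle, and the only real departure from~\cite{bblBen-Artzi94}, is ensuring that the additional drift $\gradinv d$ does not destroy the $L^2$ bootstrap; this is precisely the step carried out in the proof of lemma~\ref{e_vorticity_decay} via the appearance of the $\|d\|_{L^\infty}/t$ term in~\eqref{e_diff_inequality}, and its resolution via Young's inequality absorbs the new term into an otherwise classical argument.
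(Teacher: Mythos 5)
Your proposal follows essentially the same route as the paper: the divergence is solved first as a pure heat equation, the vorticity is constructed via the Ben-Artzi-style iteration $\del_t\omega_k+\dv(u_{k-1}\omega_k)=\lap\omega_k$ with iteration-uniform bounds obtained exactly as in lemma~\ref{e_vorticity_decay}, uniqueness comes from estimating differences of solutions, and general $L^1$ data is reached by approximation. The paper itself only sketches this argument, and your write-up supplies the same ingredients in the same order.
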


%
%
%
%
%
%
%
%
%
%
%
%

  \section*{Acknowledgements}

  The authors thank Thierry Gallay for suggesting the problem to us and for many helpful discussions.

  \bibliographystyle{abbrv}
  \bibliography{refs}
\end{document}